\theoremstyle{plain}
\newtheorem{thm}{Theorem}[section]
\newtheorem{lmm}[thm]{Lemma}
\newtheorem{prp}[thm]{Proposition}
\newtheorem{cor}[thm]{Corollary}
\theoremstyle{definition}
\newtheorem{dfn}[thm]{Definition}
\newtheorem{rmk}[thm]{Remark}
\newtheorem{ex}[thm]{Example}
\definecolor{e-mail}{rgb}{0,.40,.80}
\definecolor{reference}{rgb}{.20,.60,.22}
\definecolor{mrnumber}{rgb}{.80,.40,0}
\definecolor{citation}{rgb}{0,.40,.80}
\newcommand{\p}{\mathfrak{p}}
\newcommand{\E}{\mathrm{ell}}
\newcommand{\et}{\mathrm{\acute{e}t}}
\newcommand{\gal}{\mathrm{Gal}}
\newcommand{\cyc}{\mathrm{cyc}}
\newcommand{\fracp}{\lbrack 1/p \rbrack}
\title[]{On factorizations of certain Kummer characters associated to once-punctured elliptic curves with complex multiplication}
\date{}
\email{ishii.shun@keio.jp}
\author[]{Shun Ishii$^{\dagger}$}
\address[$\dagger$]{{\footnotesize Department of Mathematics, Keio University, 3-14-1 Hiyoshi, Kouhoku-ku, Yokohama 223-8522, Japan.}}
\author[]{Yuki Goto$^{\dagger\diamond}$}
\address[$\diamond$]{{\footnotesize Mathematical Science Team, RIKEN Center for Advanced Intelligence Project (AIP),1-4-1 Nihonbashi, Chuo-ku, Tokyo 103-0027, Japan.}}
\begin{document}

% abstract
\begin{abstract}
In this paper, we study certain Kummer characters, which we call the elliptic Soul\'e characters, arising from Galois actions on the pro-$p$ fundamental groups of once-punctured elliptic curves with complex multiplication. In particular, we prove that elliptic Soul\'e characters having values in Tate twists can be written in terms of the Soul\'e characters and generalized Bernoulli numbers. We apply this result to give a criterion for surjectivity of the elliptic Soul\'e characters and an analogue of the Coleman-Ihara formula.
\end{abstract}

% title
\maketitle

%contents
\begin{center}
\tableofcontents
\end{center}

\section{Introduction}\label{1}

Let $K$ be an imaginary quadratic field of class number one, $E$ an elliptic curve over $K$ with complex multiplication by the ring of integers $O_{K}$ of $K$ and $p \geq 5$ be a prime which splits in $K$ as $(p)=\p \bar{\p}$. The purpose of this paper is to investigate certain Kummer characters (cf. Definition \ref{dfn:ellSoule}) associated to $p$-power roots of special values of the fundamental theta function (cf. Definition \ref{dfn:theta}) of $E$ and prove a certain factorization formula for such characters (cf. Theorem \ref{thm:main}). 

Geometrically, such characters are known to arise from (outer) Galois actions on the pro-$p$ geometric fundamental groups of the once-punctured elliptic curve $X=E \setminus O$
\[
    G_{K} \coloneqq \gal(\overline{K}/K) \to \mathrm{Out}(\pi_{1}(X \times_{K} \overline{K})^{(p)}) \coloneqq \mathrm{Aut}(\pi_{1}(X \times_{K} \overline{K})^{(p)})/\mathrm{Inn}(\pi_{1}(X \times_{K} \overline{K})^{(p)})
\] by Nakamura \cite[Theorem A]{Na95} (see also Remark \ref{rmk:Nakamura}). Here, $\overline{K}$ is an algebraic closure of $K$ and $\pi_{1}(X_{\overline{K}})^{(p)}$ is the maximal pro-$p$ quotient of the \'etale fundamental group of $X \times_{K} \overline{K}$ \footnote{More precisely, such characters arise from Galois actions on the maximal \emph{metabelian} quotient of the pro-$p$ fundamental group $\pi_{1}(X \times_{K} \overline{K})^{(p)}$. }. 

In the following of this paper, we also fix an embedding from $\overline{K}$ into $\mathbb{C}$ and a Weierstrass form $y^{2}=4x^{3}-g_{2}x-g_{3}$ of $E$ over $K$. Let $L=\Omega \cdot O_{K}$ be the lattice which gives a uniformization 
\[
    \mathbb{C}/L \xrightarrow{\sim} E(\mathbb{C}) \; : \; z \mapsto [\wp_{L}(z) : \wp_{L}'(z) : 1].
\] of the fixed Weierstrass form, where $\wp_{L}(z)$ is the usual Weierstrass $\wp$-function. For each $n\geq 1$, the element $\Omega/p^{n} \in \mathbb{C}$ gives an $O_{K}$-basis $\omega_{n}$ of $E[p^{n}](\overline{K})$ under this uniformization. Moreover, we have an isomorphism
\[
E[p^{n}](\overline{K}) \xrightarrow{\sim} E[\p^{n}](\overline{K}) \oplus E[\bar{\p}^{n}](\overline{K})
\] induced by $
O_{K}/(p^{n}) \xrightarrow{\sim} O_{K}/(\p^{n}) \times O_{K}/(\bar{\p}^{n})
$.  We write the image of $\omega_{n}$ under this isomorphism by $(\omega_{\p,n}, \omega_{\bar{\p},n})$. We have two characters
\[
\varepsilon_{\p} \colon G_{K} \to O_{K,\p}^{\times} \xrightarrow{\sim} \mathbb{Z}_{p}^{\times} 
\quad \text{and} \quad
\varepsilon_{\bar{\p}} \colon G_{K} \to O_{K,\bar{\p}}^{\times} \xrightarrow{\sim}  \mathbb{Z}_{p}^{\times} 
\] associated to the $\p$-adic Tate module $T_{\p}E$ and  the $\bar{\p}$-adic Tate module $T_{\bar{\p}}E$, respectively. For a pair of integers $\boldsymbol{m}=(m_{1}, m_{2})$, we write
\[
\varepsilon^{\boldsymbol{m}} \coloneqq \varepsilon_{\p}^{m_{1}}\varepsilon_{\bar{\p}}^{m_{2}} \colon G_{K} \to \mathbb{Z}_{p}^{\times}.
\] With this notation, $\varepsilon^{(1,1)}$ coincides with the $p$-adic cyclotomic character $\varepsilon_{\cyc}$ on $K$.

We denote the mod-$p^{n}$ ray class field of $K$ by $K(p^{n})$ and let $
K(p^{\infty})\coloneqq \cup_{n>0} K(p^{n})$, which is the maximal abelian extension of $K$ unramified outside $p$. Note that the field $K(p^{\infty})$ corresponds to the kernel of the homomorphism
\[
(\varepsilon_{\p}, \varepsilon_{\bar{\p}}) \bmod O_{K}^{\times} \colon G_{K} \to \mathrm{Aut}_{O_{K} \otimes \mathbb{Z}_{p}}\left( T_{p}E \right) / O_{K}^{\times} \cong \left( O_{K,\p}^{\times} \times O_{K, \bar{\p}}^{\times} \right)/O_{K}^{\times}.
\] In fact, the fixed field of the kernel is obtained by adjoining all coordinates of $E[p^{\infty}](\overline{K})/O_{K}^{\times} \subset E/O_{K}^{\times} \cong \mathbb{P}^{1}_{K}$ to $K$ where the isomorphism is given by a Weber function \cite[Example 5.5.1]{Si}. The desired result follows from \cite[Theorem 5.6]{Si}.

In particular, the character $\varepsilon^{\boldsymbol{m}}$ factors through $\gal(K(p^{\infty})/K)$ if $m_{1} \equiv m_{2} \bmod w_{K}$, where $w_{K} \coloneqq |O_{K}^{\times}|$ is the order of the unit group of $O_{K}$. For a pair of positive integers $\boldsymbol{m}=(m_{1}, m_{2})$ and $n \geq 1$, we define \[
\epsilon_{\boldsymbol{m}, n} \coloneqq \prod_{\substack{0 \leq a,b<p^{n} \\ p \nmid  \gcd(a,b)}}  \theta(a\omega_{\p, n}+b\omega_{\bar{\p}, n}, L)^{a^{m_{1}-1}b^{m_{2}-1}} 
\in K(p^{\infty})^{\times}/K(p^{\infty})^{\times p^{n}},
\] where $\theta(z,L)$ is the fundamental theta function associated to $L$, cf. Definition \ref{dfn:theta}. 

Note that, if we replace the generator $\Omega$ of $L$ with a $O_{K}^{\times}$-multiple of it, then both $\omega_{\p,n}$ and $\omega_{\bar{\p},n}$ are multiplied by such the unit. However, the element $\epsilon_{\boldsymbol{m}, n}$ does not depend on the choice of $\Omega$, since the fundamental theta function satisfies $\theta(cz,  cL)=\theta(z,L)$ for every $c \in \mathbb{C}^{\times}$.

\begin{rmk}
Since the fundamental theta function is not periodic with respect to $L$, the value $\theta(a\omega_{\p, n,1}+b\omega_{\bar{\p}, n}, L)$ is not well-defined as it is. However, in Lemma \ref{lmm:theta_val} (1), we will show that the value $\theta(\omega, L)$ is contained in $K(p^{\infty})^{\times}$ for every $\omega \in \frac{1}{p^{n}}L \setminus L$ which lifts $a\omega_{\p, n}+b\omega_{\bar{\p}, n} \in \frac{1}{p^{n}}L/L$, and its image in the $p$-adic completion $\widehat{K(p^{\infty})^{\times}}$ does not depend on the choice of $\omega$. Hence we define the value 
\[
\theta(a\omega_{\p, n}+b\omega_{\bar{\p}, n}, L) \coloneqq \theta(\omega, L) \in K(p^{\infty})^{\times}/K(p^{\infty})^{\times p^{n}}
\] by taking an arbitrary lift $\omega$, cf. Lemma \ref{lmm:theta_val} (2).
\end{rmk}

\begin{lmm}[cf. Section \ref{2.1} for the proof]\label{lmm:epsilon} For each $n \geq 1$, the following assertions hold.
\begin{enumerate}
    \item The element $\epsilon_{\boldsymbol{m}, n}$ is invariant under the action of $\gal(K(p^{\infty})/K(p^{n}))$. Moreover, we have \[
        \sigma(\epsilon_{\boldsymbol{m}, n})=(\epsilon_{\boldsymbol{m}, n})^{\varepsilon_{\cyc}\varepsilon^{-\boldsymbol{m}}(\sigma)}
    \] for every $\sigma \in \gal(K(p^{\infty})/K)$.
    \item The relation
    \[
    \epsilon_{\boldsymbol{m}, n+1}=\epsilon_{\boldsymbol{m}, n}
    \] holds in $K(p^{\infty})^{\times}/K(p^{\infty})^{\times p^{n}}$. 
\end{enumerate}
\end{lmm}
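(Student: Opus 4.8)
The plan is to reduce both assertions to two classical properties of the fundamental theta function---its Galois transformation law and its distribution relation under multiplication by $p$---after which everything becomes bookkeeping of exponents modulo $p^{n}$. Throughout I use Lemma \ref{lmm:theta_val} to treat $\theta(\omega, L)$ as a well-defined element of $K(p^{\infty})^{\times}/K(p^{\infty})^{\times p^{n}}$ depending only on the image of $\omega$ in $E[p^{n}]$, together with the standing assumption $m_{1} \equiv m_{2} \bmod w_{K}$ (without which $\varepsilon_{\cyc}\varepsilon^{-\boldsymbol{m}}$ is not even defined on $\gal(K(p^{\infty})/K)$). The key input for (1) is the transformation law
\[
\sigma\bigl(\theta(\omega, L)\bigr) = \theta(\sigma \cdot \omega, L) \quad \text{in } K(p^{\infty})^{\times}/K(p^{\infty})^{\times p^{n}}
\]
for $\sigma \in \gal(K(p^{\infty})/K)$ and $\omega \in E[p^{n}]$, where $\sigma \cdot \omega$ is the Galois action on the torsion point. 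For $\sigma = \sigma_{\mathfrak{a}}$ the Artin symbol of an ideal $\mathfrak{a} = (\alpha)$ prime to $p$, this follows from the main theorem of complex multiplication together with the homogeneity $\theta(cz, cL) = \theta(z, L)$, which converts the change of lattice $L \mapsto \alpha^{-1} L$ into multiplication of the argument by $\alpha$. I expect the genuine obstacle to lie here: such symbols only realize the diagonal action $\varepsilon_{\p}(\sigma) = \varepsilon_{\bar{\p}}(\sigma)$, so to obtain the law for all $\sigma$---in particular for those ramified at $\p$ and $\bar{\p}$, where $\varepsilon_{\p}(\sigma) \neq \varepsilon_{\bar{\p}}(\sigma)$---one must feed in the local reciprocity at the primes above $p$ packaged in Lemma \ref{lmm:theta_val}.

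Granting the transformation law, write $u = \varepsilon_{\p}(\sigma)$ and $v = \varepsilon_{\bar{\p}}(\sigma)$, so that $\sigma \cdot (a\omega_{\p,n} + b\omega_{\bar{\p},n}) = au\,\omega_{\p,n} + bv\,\omega_{\bar{\p},n}$. Applying the law factor by factor and substituting $a' \equiv au$, $b' \equiv bv \pmod{p^{n}}$ gives a permutation of the index set $\{0 \le a,b < p^{n},\ p \nmid \gcd(a,b)\}$, since $u, v$ are units modulo $p^{n}$ and multiplication by a unit preserves the condition $p \nmid \gcd$. Under this substitution the exponent $a^{m_{1}-1}b^{m_{2}-1}$ becomes $(a')^{m_{1}-1}(b')^{m_{2}-1}\, u^{1-m_{1}}v^{1-m_{2}}$, and the common factor $u^{1-m_{1}}v^{1-m_{2}}$ pulls out of the whole product, yielding
\[
\sigma(\epsilon_{\boldsymbol{m}, n}) = \epsilon_{\boldsymbol{m}, n}^{\,u^{1-m_{1}}v^{1-m_{2}}} = \epsilon_{\boldsymbol{m}, n}^{\,\varepsilon_{\cyc}\varepsilon^{-\boldsymbol{m}}(\sigma)},
\]
where the identity $u^{1-m_{1}}v^{1-m_{2}} = (uv)\,u^{-m_{1}}v^{-m_{2}} = \varepsilon_{\cyc}\varepsilon^{-\boldsymbol{m}}(\sigma)$ shows that the two shifts by $-1$ in the exponents $m_{i}-1$ are exactly what produces the cyclotomic factor $uv = \varepsilon_{\cyc}(\sigma)$. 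The invariance in (1) is then the special case $\sigma \in \gal(K(p^{\infty})/K(p^{n}))$: for such $\sigma$ one has $u \equiv \zeta_{\p}$ and $v \equiv \zeta_{\bar{\p}} \pmod{p^{n}}$, where $\zeta_{\p}, \zeta_{\bar{\p}}$ are the images of some $\zeta \in O_{K}^{\times}$ under $O_{K,\p}^{\times} \cong \mathbb{Z}_{p}^{\times}$ and $O_{K,\bar{\p}}^{\times} \cong \mathbb{Z}_{p}^{\times}$, and $N(\zeta) = \zeta\bar{\zeta} = 1$ forces $\zeta_{\bar{\p}} \equiv \zeta_{\p}^{-1}$; hence $u^{1-m_{1}}v^{1-m_{2}} \equiv \zeta_{\p}^{\,m_{2}-m_{1}} \equiv 1 \pmod{p^{n}}$ because $w_{K} \mid (m_{2}-m_{1})$ and $\zeta_{\p}^{w_{K}} \equiv 1$.

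For (2) the essential tool is the distribution relation
\[
\prod_{\tau \in E[p]} \theta(\omega + \tau, L) = \theta(p\omega, L) \quad \text{in } K(p^{\infty})^{\times}/K(p^{\infty})^{\times p^{n}},
\]
which holds up to a root of unity and hence exactly in this quotient, since $p \nmid w_{K}$ makes every root of unity a $p^{n}$-th power. Applying it to each factor of $\epsilon_{\boldsymbol{m}, n}$ (with index $0 \le \bar{a}, \bar{b} < p^{n}$) and using $p\,\omega_{\p,n+1} = \omega_{\p,n}$ together with $E[p] = \{\,s\omega_{\p,1} + t\omega_{\bar{\p},1} : 0 \le s,t < p\,\}$, where $\omega_{\p,1} = p^{n}\omega_{\p,n+1}$, rewrites $\theta(\bar{a}\omega_{\p,n} + \bar{b}\omega_{\bar{\p},n}, L)$ as $\prod_{0 \le s,t < p} \theta((\bar{a}+p^{n}s)\omega_{\p,n+1} + (\bar{b}+p^{n}t)\omega_{\bar{\p},n+1}, L)$. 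Setting $a = \bar{a}+p^{n}s$ and $b = \bar{b}+p^{n}t$ turns the resulting double product into a single product over $\{0 \le a,b < p^{n+1}\}$; the constraint $p \nmid \gcd(\bar{a},\bar{b})$ matches $p \nmid \gcd(a,b)$ because reduction modulo $p$ is unaffected by adding multiples of $p^{n}$, and the exponent $\bar{a}^{\,m_{1}-1}\bar{b}^{\,m_{2}-1}$ agrees with $a^{m_{1}-1}b^{m_{2}-1}$ modulo $p^{n}$ (since $a \equiv \bar{a}$, $b \equiv \bar{b} \pmod{p^{n}}$ implies $a^{k} \equiv \bar{a}^{k}$). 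Hence the two expressions coincide in $K(p^{\infty})^{\times}/K(p^{\infty})^{\times p^{n}}$, i.e.\ $\epsilon_{\boldsymbol{m}, n} = \epsilon_{\boldsymbol{m}, n+1}$. The only real input is the precise form of the distribution relation; the matching of index sets and exponents is then routine.
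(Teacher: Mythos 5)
Your proof of (2) is correct and is exactly the paper's argument: the distribution relation of Lemma \ref{lmm:theta_val} (4), followed by the re-indexing $a=\bar{a}+p^{n}s$, $b=\bar{b}+p^{n}t$. (One detail: your appeal to $p\nmid w_{K}$ to kill the root-of-unity ambiguity is beside the point --- the ambiguity in Lemma \ref{lmm:theta_val} (4) is by $\mu_{p^{\infty}}$, which dies in $K(p^{\infty})^{\times}/K(p^{\infty})^{\times p^{n}}$ because $\mu_{p^{\infty}}$ is $p$-divisible --- but the conclusion is right.) Likewise, the computational core of (1) --- apply the transformation law factor by factor, substitute $a'\equiv au$, $b'\equiv bv$, and pull out $u^{1-m_{1}}v^{1-m_{2}}=\varepsilon_{\cyc}\varepsilon^{-\boldsymbol{m}}(\sigma)$ --- is precisely the computation in Section \ref{2.1}, and your remark that $m_{1}\equiv m_{2}\bmod w_{K}$ is needed for the exponent to make sense on $\gal(K(p^{\infty})/K)$ is a fair point the paper leaves implicit.

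The gap is in the step you yourself flag as ``the genuine obstacle'', and the obstacle is of your own making: the claim that prime-to-$p$ Artin symbols ``only realize the diagonal action $\varepsilon_{\p}(\sigma)=\varepsilon_{\bar{\p}}(\sigma)$'' is false. By the theory of complex multiplication, $\sigma_{\mathfrak{a}}$ with $\mathfrak{a}=(\alpha)$ acts on $T_{\p}E$ and on $T_{\bar{\p}}E$ through the images of a (suitably normalized) generator $\alpha$ under the two \emph{distinct} embeddings $K\hookrightarrow K_{\p}\cong\mathbb{Q}_{p}$ and $K\hookrightarrow K_{\bar{\p}}\cong\mathbb{Q}_{p}$, i.e.\ through the $p$-adic images of $\alpha$ and of $\bar{\alpha}$; these differ whenever $\alpha\notin\mathbb{Q}$. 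Moreover, by surjectivity of the Artin map for ray class fields, the symbols $\left(\frac{K(p^{n})/K}{\mathfrak{a}}\right)$ with $\mathfrak{a}$ prime to $p$ already exhaust all of $\gal(K(p^{n})/K)$ for every $n$, so there are no ``ramified'' elements left to reach; and your proposed remedy --- ``local reciprocity at the primes above $p$ packaged in Lemma \ref{lmm:theta_val}'' --- refers to nothing, since Lemma \ref{lmm:theta_val} contains no local statement at $p$: its part (3) is exactly about prime-to-$p$ Artin symbols. The correct way to close the argument (implicit in the paper) is to establish the invariance assertion \emph{first}, directly from Lemma \ref{lmm:theta_val} (3) applied factor by factor, rather than deriving it at the end from the transformation law. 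Once $\epsilon_{\boldsymbol{m},n}$ is known to be fixed by $\gal(K(p^{\infty})/K(p^{n}))$, and since $\varepsilon_{\cyc}\varepsilon^{-\boldsymbol{m}}\bmod p^{n}$ is trivial on that subgroup (this is your own unit computation with $\zeta_{\bar{\p}}=\zeta_{\p}^{-1}$), both sides of the asserted identity factor through the finite group $\gal(K(p^{n})/K)$, all of whose elements are prime-to-$p$ Artin symbols; hence your verification on Artin symbols proves the statement for every $\sigma\in\gal(K(p^{\infty})/K)$. Alternatively one can argue by density of Artin symbols plus local constancy of both sides, but some such argument must be supplied; as written, your extension step rests on a false premise and an unexplained mechanism.
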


Lemma \ref{lmm:epsilon} allows us to define elliptic Soul\'e characters. In the following, let $\zeta_{p^{n}} \coloneqq \exp(\frac{2 \pi i}{p^{n}}) \in \mathbb{C}$ be the primitive $p^{n}$-th root of unity.

\begin{dfn}\label{dfn:ellSoule}
For a pair of positive integers $\boldsymbol{m}=(m_{1}, m_{2})$, we define the $\boldsymbol{m}$-th elliptic Soul\'e character 
\[
\chi_{\boldsymbol{m}}^{\E} \colon G_{K(p^{\infty})} \to \mathbb{Z}_{p}(\boldsymbol{m}) \coloneqq \mathbb{Z}_{p}(\varepsilon^{\boldsymbol{m}})
\] to be a unique character satisfying the following relation for every $n \geq 1$ and $\sigma \in  G_{K(p^{\infty})}$: 
\[
\zeta_{p^{n}}^{\chi_{\boldsymbol{m}}^{\E}(\sigma)}=\frac{\sigma \left( \epsilon_{\boldsymbol{m}, n}^{\frac{1}{p^{n}}}  \right)}{ \epsilon_{\boldsymbol{m}, n}^{\frac{1}{p^{n}}}}.
\]
\end{dfn}

Since the element $\epsilon_{\boldsymbol{m}, n}$ does not depend on the choice of $\Omega$, neither does $\chi_{\boldsymbol{m}}^{\E}$.

\begin{rmk}[Relation to Nakamura's work {\cite{Na95}}]\label{rmk:Nakamura}
  We remark that essentially the same characters are already introduced and studied in the pioneering work of Nakamura \cite{Na95}. If we fix a $\mathbb{Z}$-basis ${\omega_{1},\omega_{2}}$ of $L$ such that  the ratio $\omega_{1}/\omega_{2}$ is contained in the upper half-plane, then the character 
    \[
        \kappa_{i,j} \colon G_{K(E[p^{\infty}])} \to \mathbb{Z}_{p}
    \] in \cite[(3.11.5)]{Na95} is defined in the same manner as $\chi^{\E}_{i+1,j+1}$ by using 
    \[
        \prod_{\substack{0 \leq a,b<p^{n} \\ p \nmid  \gcd(a,b)}}  \theta(a\omega_{1}+b\omega_{2}, p^{n}L)^{a^{i}b^{j}}
    \] instead of $\epsilon_{\boldsymbol{m},n}$. Nakamura proved \cite[Theorem A]{Na95} that these characters describe the $G_{K(E[p^{\infty}])}$-action on the commutator subgroup of the maximal metabelian quotient of $\pi_{1}(X \times_{K} \overline{K})^{(p)}$. 
    
    The only difference between $\kappa_{i,j}$ and $\chi^{\E}_{i+1,j+1}$ is the choice of a basis of the $p$-adic Tate module $T_{p}E$. Indeed, by a direct computation, one can observe that
    \[
        \sum_{i+j=m}\kappa_{i,j} \cdot \omega_{1}^{\otimes i} \otimes \omega_{2}^{\otimes j} \otimes (\zeta_{p^{n}})_{n} \in H^{1}(G_{K(E[p^{\infty}])}, \mathrm{Sym}^{m}T_{p}E(1))^{\gal(K(E[p^{\infty}])/K)}
    \] coincides with the restriction of $\sum_{i+j=m} \chi^{\E}_{i+1,j+1} \cdot \omega_{\p}^{\otimes i} \otimes \omega_{\bar{\p}}^{\otimes j} \otimes (\zeta_{p^{n}})_{n}$ to $K(E[p^{\infty}])$, noting that our characters are defined on $G_{K(p^{\infty})}^{\mathrm{ab}}$. Therefore, the characters we consider are the same as those in \cite{Na95}, once we regard both of them as $\mathrm{Sym}^{m}T_{p}E(1)$-valued homomorphisms. We remark that Nakamura treats \emph{every} once-punctured elliptic curves over number fields, not only those having complex multiplication. Interested readers are encouraged to read the original paper, which contains applications to anabelian geometry for genus one hyperbolic curves.
\end{rmk}

The elliptic Soul\'e characters are analogues of the following so-called Soul\'e characters: For each integers $m, n \geq 1$, we define 
\[
\epsilon_{m,n}^{\cyc} \coloneqq \prod_{\substack{0 < a <p^{m} \\ p \nmid a }}
(1-\zeta_{p^{n}}^{a})^{a^{m-1}}  \in K(\mu_{p^{n}})^{\times}/K(\mu_{p^{n}})^{\times p^{n}}.
\] One can confirm the following analogue of Lemma \ref{lmm:epsilon}: 
\begin{itemize}
    \item $
    \sigma(\epsilon_{m, n}^{\cyc})=(\epsilon_{m, n}^{\cyc})^{\varepsilon_{\cyc}^{1-m}(\sigma)}
    $ for every $\sigma \in \gal(K(\mu_{p^{n}})/K)$.
    \item  
    $
    \epsilon_{m, n+1}^{\cyc}=\epsilon_{m, n}^{\cyc}
    $ holds in $K(\mu_{p^{n+1}})^{\times}/K(\mu_{p^{n+1}})^{\times p^{n}}$.
\end{itemize}

\begin{dfn}[Soul\'e characters]\label{dfn:Soule}
For each odd integer $m \geq 3$, we define the $m$-th Soul\'e character 
\[
\chi_{m} \colon G_{K(\mu_{p^{\infty}})} \to \mathbb{Z}_{p}(m) \coloneqq \mathbb{Z}_{p}(\varepsilon_{\cyc}^{m})
\] to be a unique character satisfying
\[
\zeta_{p^{n}}^{\chi_{m}(\sigma)}=\frac{\sigma \left( \epsilon_{m, n}^{\frac{1}{p^{n}}}  \right)}{ \epsilon_{m, n}^{\frac{1}{p^{n}}}}
\quad \text{for every $n \geq 1$ and $\sigma \in  G_{K(\mu_{p^{\infty}})}$}.
\]  
\end{dfn}

\begin{rmk}\label{rmk:Soule}\,
    \begin{enumerate}
    \item The $m$-th Soul\'e character extends to an element of  $H^{1}_{\et}(\mathbb{Z}[1/p], \mathbb{Z}_{p}(m))$ (cf. Section \ref{2.3}). 

    \item If we define $\chi_{m}$ for even $m \geq 2$ in a similar way, then $\chi_{m}$ is trivial (cf. Example \ref{ex:Soule}).

    \item The $m$-th Soul\'e character is nontrivial. Similarly, the first author proved that their elliptic counterparts are nontrivial under suitable assumptions \cite[Theorem 1.5 (1)]{Is23}. For example, $\chi^{\E}_{(m,m)}$ is nontrivial for every $m \geq 2$ \cite[Corollary 4.10]{Is23}. 
    \end{enumerate}
\end{rmk}

The main result of this paper is the following explicit formula between the elliptic Soul\'e characters and the Soul\'e characters. In the rest of this paper, we denote $\chi^{\E}_{(m,m)}$ by  $\chi^{\E}_{m}$ for simplicity. 

\begin{thm}[cf. Corollary \ref{cor:main2}]\label{thm:main}
Let $m \geq 2$ be an integer, $\chi_{K}$ the Dirichlet character associated to $K$, $B_{m, \chi_{K}}$ the $m$-th generalized Bernoulli number of $\chi_{K}$ \cite[Chapter 4, p.31]{Wa82} and $\chi_{m}^{(d_{K})} \colon G_{K(\mu_{p^{\infty}})} \to \mathbb{Z}_{p}(m)$ a certain variant of Soul\'e characters, cf. Definition \ref{dfn:genSoule}. Then the following relation holds between characters on $G_{K(p^{\infty})}$:
\[
\frac{\chi_{m}^{\E}}{1-p^{2m-2}}
=
\begin{dcases}
-6w_{K} \cdot \frac{B_{m}}{m} \cdot \frac{\chi_{m}^{(d_{K})}}{1-p^{m-1}} & \text{if $m$ is even,} \\
-3w_{K} \cdot \frac{B_{m,\chi_{K}}}{m} \cdot \frac{\chi_{m}}{1-p^{m-1}} & \text{if $m$ is odd.}
\end{dcases}
\]
\end{thm}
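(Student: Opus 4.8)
The plan is to prove an identity between the Kummer elements underlying the two families of characters and then transport it to the characters themselves. By Definitions \ref{dfn:ellSoule} and \ref{dfn:Soule}, both $\chi_m^{\E}$ and $\chi_m$ (and likewise $\chi_m^{(d_K)}$) are determined by the reductions modulo $p^n$-th powers of the elements $\epsilon_{(m,m),n}$ and $\epsilon_{m,n}^{\cyc}$; moreover, since $\varepsilon^{(m,m)}=\varepsilon_{\cyc}^m$, all three characters take values in the same module $\mathbb{Z}_p(m)$, so a proportionality between them makes sense. Thus I would first reduce Theorem \ref{thm:main} to showing that, in $K(p^\infty)^\times/K(p^\infty)^{\times p^n}$, the element $\epsilon_{(m,m),n}$ agrees with an explicit power (the stated factor involving $w_K$ and a Bernoulli number) of the appropriate cyclotomic element, up to the Euler-type factors $1-p^{2m-2}$ and $1-p^{m-1}$.

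The core step is an explicit factorization of the theta-function product. I would substitute the product expansion of the fundamental theta function $\theta(z,L)$ (Definition \ref{dfn:theta}) into $\epsilon_{(m,m),n}=\prod_{p\nmid\gcd(a,b)}\theta(a\omega_{\p,n}+b\omega_{\bar{\p},n},L)^{a^{m-1}b^{m-1}}$ and use the splitting $(p)=\p\bar{\p}$ to separate the $\p$- and $\bar{\p}$-components of the division points. The mechanism I expect is that, after this separation, one direction contributes a genuine cyclotomic factor of the form $1-\zeta_{p^n}^{c}$, so that the product reassembles into a power of $\epsilon_{m,n}^{\cyc}$ (respectively its $\chi_K$-twist governing $\chi_m^{(d_K)}$), while summing the weights $a^{m-1}b^{m-1}$ against the remaining expansion reduces, via the power-sum evaluation $\sum_{a} a^{k}\equiv \tfrac{1}{k+1}B_{k+1}(\cdots)$, to a (generalized) Bernoulli number. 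The parity dichotomy is then automatic: because $\chi_K$ is the odd quadratic character attached to the imaginary quadratic field $K$, symmetrizing the indices under $\gal(K/\mathbb{Q})$ (equivalently $\p\leftrightarrow\bar{\p}$) annihilates $B_{m,\chi_K}$ for even $m$ and annihilates $B_m$ for odd $m$, which is exactly why $B_m$ (paired with the variant $\chi_m^{(d_K)}$) governs the even case and $B_{m,\chi_K}$ (paired with $\chi_m$) governs the odd case. The constants $-6w_K$ and $-3w_K$ arise from the weight-$12$ normalization of $\theta$ together with the extra factor $2$ produced by the parity of $\theta(-z,L)$ relative to $\theta(z,L)$ under $(a,b)\mapsto(-a,-b)$, the $w_K$ recording the unit ambiguity already noted after the definition of $\epsilon_{\boldsymbol{m},n}$.

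To isolate the Euler factors, I would exploit the distribution relation $\epsilon_{(m,m),n+1}=\epsilon_{(m,m),n}$ of Lemma \ref{lmm:epsilon}(2) and its cyclotomic counterpart: the difference between the index condition $p\nmid\gcd(a,b)$ in the elliptic product and $p\nmid a$ in the cyclotomic product is precisely the imprimitive contribution, whose removal introduces the local factors $1-p^{2m-2}$ and $1-p^{m-1}$ on the two sides. Finally, I would translate the resulting element identity back into the character identity using the defining Kummer relations and pass to the inverse limit over $n$.

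I expect the main obstacle to be the factorization of the second paragraph: carrying out the two-variable theta product while simultaneously identifying which lattice direction yields the cyclotomic $1-\zeta_{p^n}$ factor, tracking the exact Bernoulli coefficient through the power-sum evaluation, and bookkeeping the root-of-unity and $w_K$ normalizations so that the numerical constants $-6w_K$ and $-3w_K$ emerge correctly. The delicate interplay between the complex-analytic product formula for $\theta$ and the arithmetic of the elements modulo $p^n$-th powers is where the real work lies; the reduction and Euler-factor steps should be comparatively routine.
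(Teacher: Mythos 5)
The core step of your second paragraph is not viable, and it is precisely where the real content of the theorem lies. The values $\theta(a\omega_{\p,n}+b\omega_{\bar{\p},n},L)$ at torsion points of a CM elliptic curve do not admit any formal factorization into cyclotomic pieces: the product expansion of $\sigma(z,L)$ (or the $q$-expansion of the Siegel function) runs over the lattice $L=\Omega\cdot O_{K}$, whose ``directions'' do not separate according to the splitting $(p)=\p\bar{\p}$, and the nome $q$ at a CM point is not a root of unity, so no factor of the form $1-\zeta_{p^{n}}^{c}$ can be extracted by algebraic manipulation of the product. The relation between elliptic units and cyclotomic units is a genuinely transcendental theorem: it holds only after taking norms from $K(p^{n})$ down to the abelian-over-$\mathbb{Q}$ field $K(\mu_{p^{n}})$, and it is proved by comparing logarithms of absolute values at archimedean places (Kronecker limit formula type arguments). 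This is exactly the role of Kersey's theorem (Theorems \ref{thm:Kersey} and \ref{thm:Kersey_mult}) in the paper, which is the critical external input; your proposal has no substitute for it.

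Two further structural points that your reduction overlooks. First, because the comparison of units is only available through absolute values, it determines elements only up to roots of unity; consequently no exact identity of the form $\epsilon_{(m,m),n}=(\text{cyclotomic element})^{\text{explicit exponent}}$ in $K(p^{\infty})^{\times}/K(p^{\infty})^{\times p^{n}}$ can be expected. The paper instead proves an identity of cocycle classes only modulo $H^{1}(\gal(K(\mu_{p^{\infty}})/K),\Lambda(1))$ (Theorem \ref{thm:main_measure}, using Lemma \ref{lmm:unity}), and this ambiguity disappears only after specializing at the nontrivial characters $\varepsilon_{\cyc}^{m}$; Remark \ref{rmk:padicL} (1) states explicitly that an exact equality cannot be obtained by this method. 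Second, the comparison is carried out not for $\theta$ itself but for the auxiliary functions $\theta_{\mathfrak{a}}$, which form norm-compatible systems of units, and one needs Proposition \ref{prp:comp} to pass from $\chi_{m,\mathfrak{a}}^{\E}$ back to $\chi_{m}^{\E}$, with the factors $N\mathfrak{a}-N\mathfrak{a}^{1-m}$ cancelling; the Bernoulli numbers then arise from Stickelberger elements (the $B_{1}$-sums in Kersey's formula) evaluated at $\varepsilon_{\cyc}^{m}$ via \cite[Theorem 12.2]{Wa82}, not from a power-sum evaluation of the exponents $a^{m-1}b^{m-1}$. Your heuristics for the parity dichotomy (oddness of $\chi_{K}$) and for the constants are consistent with the final formula, but without a Kersey-type input the argument cannot get off the ground.
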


\begin{rmk}
Our result may be regarded as an \'etale analogue of the classical factorization $
\zeta_{K}(s)=\zeta(s)L(s, \chi_{K}) 
$ of the Dedekind zeta function of $K$ into the product of two $L$-functions. Note that there is a well-known $p$-adic analogue of this formula established by Gross \cite[Theorem]{Gr80}, stating that Katz's two-variable $p$-adic $L$-function factors into the product of two Kubota-Leopoldt's $p$-adic $L$-functions when restricted on the cyclotomic line. 
\end{rmk}

We give two corollaries of Theorem \ref{thm:main}. The first one  relates the non-vanishing of elliptic Soul\'e characters modulo $p$ to the $p$-indivisibility of the class number of $K(\mu_{p})$. This result is an analogue of the fact that the cyclotomic Soul\'e character $\chi_{m}$ is surjective for every $m=3,5,\dots,p-2$ if and only if Vandiver's conjecture holds for $p$ \cite[Corollary of Proposition 4]{IS87}.

\begin{cor}[cf. Corollary \ref{cor:main3}]
    The following two assertions are equivalent.
    \begin{enumerate}
        \item $\chi_{m}^{\E}$ is surjective for every $m = 2,3, \dots, p-2$.
        \item The class number of $K(\mu_{p})$ is not divisible by $p$.
    \end{enumerate}
\end{cor}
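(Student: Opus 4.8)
The plan is to deduce the corollary from Theorem~\ref{thm:main} together with the classical dictionary between Soul\'e characters, generalized Bernoulli numbers, and eigenspaces of $p$-class groups. Since a continuous homomorphism to $\mathbb{Z}_p$ is surjective precisely when it is nonzero modulo $p$, I would first observe that for $2 \le m \le p-2$ and $p \ge 5$ every scalar factor on the right-hand side of Theorem~\ref{thm:main} other than the Bernoulli number is a $p$-adic unit: indeed $1-p^{2m-2}\equiv 1$ and $1-p^{m-1}\equiv 1 \pmod p$, while $6w_K$ and $3w_K$ involve only the primes $2$ and $3$ (as $w_K\in\{2,4,6\}$) and hence are prime to $p$. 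Because the variant Soul\'e characters take values in $\mathbb{Z}_p$, it follows that $\chi_m^{\E}$ is surjective if and only if the relevant Bernoulli factor is a $p$-adic unit \emph{and} the accompanying Soul\'e character is surjective. Thus the hypothesis that $\chi_m^{\E}$ is surjective for all $m=2,\dots,p-2$ is equivalent to the conjunction of: $p\nmid B_m$ and $\chi_m^{(d_K)}$ surjective for all even $m$; and $p\nmid B_{m,\chi_K}$ and $\chi_m$ surjective for all odd $m$.

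Next I would match the two families of conditions to the minus and plus parts of the $p$-class group $A$ of the CM field $K(\mu_p)$. Writing $\Delta=\gal(\mathbb{Q}(\mu_p)/\mathbb{Q})$ and decomposing $A$ into eigenspaces for the characters $\omega^i$ and $\omega^i\chi_K$ of $\gal(K(\mu_p)/\mathbb{Q})\cong\Delta\times\gal(K/\mathbb{Q})$ (with $\omega$ the Teichm\"uller character), the odd characters (those sending complex conjugation to $-1$) are exactly $\omega^i$ with $i$ odd and $\omega^i\chi_K$ with $i$ even. Via the Kummer-type congruences relating $B_m/m$ and $B_{m,\chi_K}/m$ to the first generalized Bernoulli numbers $B_{1,\bar\psi}$, the collection of all the Bernoulli conditions corresponds, through the factorization $\zeta_{K(\mu_p)}(s)=\prod_\psi L(s,\psi)$ and the analytic class number formula, to the $p$-indivisibility of every factor of the minus class number $h^-(K(\mu_p))$. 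I therefore expect the Bernoulli conditions to be together equivalent to $p\nmid h^-(K(\mu_p))$. Here the standing hypothesis that $K$ has class number one is used to dispose of the boundary character $\psi=\chi_K$, which contributes $B_{1,\chi_K}$, a $p$-unit since $h_K=1$; the excluded character $\omega$ must likewise be checked to contribute a trivial eigenspace.

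For the surjectivity conditions I would appeal to the Vandiver-type theory. By the classical result \cite{IS87} recalled above, $\chi_m$ is surjective for all odd $m$ if and only if Vandiver's conjecture holds for $p$, that is, $p\nmid h^+(\mathbb{Q}(\mu_p))$; the analogous statement for the variant characters $\chi_m^{(d_K)}$, ranging over even $m$, should detect the triviality of the remaining even eigenspaces of $A(K(\mu_p))$, namely those on which $\gal(K/\mathbb{Q})$ acts by $-1$. Together these should amount to $p\nmid h^+(K(\mu_p))$. Combining this with the previous paragraph and the factorization $h_{K(\mu_p)}=h^+(K(\mu_p))\,h^-(K(\mu_p))$ yields the asserted equivalence: all $\chi_m^{\E}$ are surjective if and only if $p\nmid h_{K(\mu_p)}$. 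Conceptually, this upgrades \cite{IS87} in two independent ways, the Bernoulli factors adding the minus part and the $\chi_K$-twist enlarging $\mathbb{Q}(\mu_p)$ to $K(\mu_p)$.

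The main obstacle is the plus-part step for the twisted characters $\chi_m^{(d_K)}$: unlike the minus eigenspaces, the even eigenspaces are not directly controlled by $L$-values, so proving that their triviality is equivalent to the surjectivity of $\chi_m^{(d_K)}$ requires a genuine Iwasawa-theoretic input rather than a formal manipulation --- most naturally the main conjecture for $K$ furnished by Rubin's Euler system of elliptic units, which is exactly the arithmetic underlying the theta values defining $\epsilon_{\boldsymbol{m},n}$, together with a reflection argument to pass between plus and minus eigenspaces. A second, more bookkeeping-type difficulty is to verify that, as $m$ runs over $2,\dots,p-2$ in the two parities, the associated characters exhaust precisely the eigenspaces contributing to $h_{K(\mu_p)}$, with the boundary characters $\omega^0,\omega,\chi_K$ (and the role of $h_K=1$) fully accounted for.
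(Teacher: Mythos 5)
Your reduction via Theorem \ref{thm:main} is sound: for $2\le m\le p-2$ the factors $1-p^{m-1}$, $1-p^{2m-2}$, $3w_K$ and $6w_K$ are $p$-adic units, so $\chi_m^{\E}$ is surjective if and only if the relevant Bernoulli number is a $p$-unit \emph{and} the accompanying character $\chi_m$ or $\chi_m^{(d_K)}$ is surjective; and your minus-part dictionary (Bernoulli conditions $\Leftrightarrow$ $p\nmid h^-(K(\mu_p))$, with $B_{1,\chi_K}$ handled by $h_K=1$) is essentially what the paper delegates to the Kummer-type criterion of \cite[Theorem 1]{HP84}. But there is a genuine gap in your direction (2)$\Rightarrow$(1). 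Theorem \ref{thm:main} is an identity of characters on $G_{K(p^{\infty})}$, so what you need is surjectivity of $\chi_m$ and $\chi_m^{(d_K)}$ \emph{restricted to} $G_{K(p^{\infty})}$; the Vandiver-type results you invoke (\cite{IS87}, Remark \ref{rmk:genSoule} (3)) give surjectivity only as characters on the larger group $G_{K(\mu_{p^{\infty}})}$, and surjectivity of a character does not pass to its restriction to a subgroup. The paper closes exactly this hole with a separate argument: if $\chi_m \bmod p$ vanished on $G_{K(p^{\infty})}$, it would induce a nontrivial $\gal(K(p^{\infty})/K)$-equivariant homomorphism $\gal(K(p^{\infty})/K(\mu_{p^{\infty}}))\to\mathbb{F}_p(m)$, which is impossible because $\gal(K(p^{\infty})/K)$ is abelian, hence acts trivially by conjugation on the source, while it acts nontrivially on $\mathbb{F}_p(m)$ for $2\le m\le p-2$. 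Your proposal never addresses this step, and without it the converse implication does not go through.

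The second problem is your ``main obstacle'' paragraph, which misdiagnoses the plus part. The equivalence you need --- surjectivity of $\chi_m$ (odd $m$) and $\chi_m^{(d_K)}$ (even $m$) jointly equivalent to $p\nmid h(K(\mu_p)^+)$ --- requires neither Rubin's main conjecture nor elliptic-unit Euler systems: it is Remark \ref{rmk:genSoule} (3) of the paper, obtained by the same elementary cyclotomic-unit index argument as \cite{IS87}, and nothing in the proof of the corollary uses elliptic-unit arithmetic beyond the already-established factorization. Note also that the paper's forward direction (1)$\Rightarrow$(2) avoids the $h^+h^-$ splitting and the plus-part statement altogether: from surjectivity it extracts only the Bernoulli-unit conditions, adds $p\nmid B_{1,\chi_K}$ (the point about $h_K=1$ that you do catch), and applies \cite[Theorem 1]{HP84}, whose statement already incorporates the reflection-theorem content relating the plus and minus parts; your route through eigenspaces and the analytic class number formula would in effect re-prove that citation, with the boundary characters left only ``to be checked.'' In short: the skeleton is right and the forward direction is repairable, but the converse rests on a restriction-of-surjectivity step that is entirely missing and on a plus-part equivalence that you flag as open rather than establish.
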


The second is an analogue of the so-called Coleman-Ihara formula \cite[p.105, (Col2)]{Ih86}:

\begin{cor}[cf. Corollary \ref{cor:Coleman-Ihara}] We have
\[
\frac{\chi_{m}^{\E}(\mathrm{rec}(\boldsymbol{u}))}{1-p^{2m-2}}
=
\begin{dcases}
3w_{K} \cdot  L^{\mathrm{Katz}}_{p}(m, \omega^{1-m}) \frac{d_{K}^{m}}{g(\chi_{K})} \frac{\phi^{\mathrm{CW}}_{m}(\boldsymbol{u})}{p^{m-1}-1} & \text{if $m$ is even,} \\
3w_{K} \cdot L^{\mathrm{Katz}}_{p}(m, \omega^{1-m}) \frac{\phi^{\mathrm{CW}}_{m}(\boldsymbol{u})}{p^{m-1}-1} & \text{if $m$ is odd.} 
\end{dcases}
\] Here, $\mathrm{rec} \colon U_{\infty} \to G_{\mathbb{Q}_{p}(\mu_{d_{K}p^{\infty}})}^{\mathrm{ab}}$ is the local reciprocity map, $L^{\mathrm{Katz}}_{p}$ is Katz's  $p$-adic L-function restricted to the cyclotomic line, $g(\chi_{K}) \coloneqq \sum_{a}\chi_{K}(a)\zeta_{d_{K}}^{-a}$ and $\phi^{\mathrm{CW}}_{m} \colon U_{\infty} \to \mathbb{Z}_{p}(m)$ is the $m$-th Coates-Wiles homomorphism (see the discussion before Corollary \ref{cor:Coleman-Ihara} for definitions).
\end{cor}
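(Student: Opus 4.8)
The plan is to deduce this local formula by combining three inputs: Theorem \ref{thm:main}, the classical Coleman--Ihara formula for the cyclotomic Soul\'e characters, and Gross's factorization of Katz's $p$-adic $L$-function \cite[Theorem]{Gr80}. Since Theorem \ref{thm:main} is an equality of characters on $G_{K(p^{\infty})}$ that already writes $\chi^{\E}_{m}$ in terms of the cyclotomic characters $\chi_{m}$ (odd $m$) and $\chi^{(d_{K})}_{m}$ (even $m$), it remains valid after restriction to a decomposition group at $\p$, which is the local Galois group appearing in the statement; evaluating at $\mathrm{rec}(\boldsymbol{u})$ then reduces to the explicit reciprocity law for these cyclotomic characters. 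The only genuinely new work is to recognize the resulting product of a generalized Bernoulli number and a Kubota--Leopoldt $p$-adic $L$-value as a single value of $L^{\mathrm{Katz}}_{p}$.

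First I would substitute the two cases of Theorem \ref{thm:main} into $\chi^{\E}_{m}(\mathrm{rec}(\boldsymbol{u}))/(1-p^{2m-2})$ and use $1/(1-p^{m-1})=-1/(p^{m-1}-1)$, so that this quantity becomes $3w_{K}\frac{B_{m,\chi_{K}}}{m}\frac{\chi_{m}(\mathrm{rec}(\boldsymbol{u}))}{p^{m-1}-1}$ for odd $m$ and $6w_{K}\frac{B_{m}}{m}\frac{\chi^{(d_{K})}_{m}(\mathrm{rec}(\boldsymbol{u}))}{p^{m-1}-1}$ for even $m$. I would then invoke the classical Coleman--Ihara formula \cite[p.105, (Col2)]{Ih86}, which expresses $\chi_{m}(\mathrm{rec}(\boldsymbol{u}))$ as an explicit constant times $L_{p}(m,\omega^{1-m})\,\phi^{\mathrm{CW}}_{m}(\boldsymbol{u})$, where $L_{p}$ denotes the Kubota--Leopoldt $p$-adic $L$-function. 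For the even case I need the analogous reciprocity law for the variant $\chi^{(d_{K})}_{m}$; I would establish it by the same Coleman-power-series and Coates--Wiles argument, the only new feature being that the conductor-$d_{K}$ twist built into $\chi^{(d_{K})}_{m}$ produces the Gauss-sum factor $d_{K}^{m}/g(\chi_{K})$ and replaces $L_{p}(m,\omega^{1-m})$ by the twisted value $L_{p}(m,\omega^{1-m}\chi_{K})$.

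The decisive step is to convert the Bernoulli prefactor into the \emph{missing} second Kubota--Leopoldt factor and then apply Gross. Concretely, $B_{m,\chi_{K}}/m$ and $B_{m}/m$ are, up to explicit Euler and Gauss-sum factors, special values of $L_{p}(\cdot,\omega^{1-m}\chi_{K})$ and $L_{p}(\cdot,\omega^{1-m})$ respectively: via the interpolation property $L_{p}(1-m,\chi_{K}\omega^{m})=-(1-\chi_{K}(p)p^{m-1})\,B_{m,\chi_{K}}/m$ together with the $p$-adic functional equation, I would rewrite $B_{m,\chi_{K}}/m$ (odd case) as an explicit constant times $L_{p}(m,\omega^{1-m}\chi_{K})$, and likewise $B_{m}/m$ (even case) as a constant times $L_{p}(m,\omega^{1-m})$. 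Each case thereby becomes a product $L_{p}(m,\omega^{1-m})\,L_{p}(m,\omega^{1-m}\chi_{K})\,\phi^{\mathrm{CW}}_{m}(\boldsymbol{u})$, and Gross's theorem \cite[Theorem]{Gr80}, which factors $L^{\mathrm{Katz}}_{p}$ restricted to the cyclotomic line as the product of precisely these two Kubota--Leopoldt $p$-adic $L$-functions, collapses the product into $L^{\mathrm{Katz}}_{p}(m,\omega^{1-m})$ and yields the stated shape.

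The main obstacle will be the reconciliation of constants and Gauss sums across these inputs. The coefficients $-3w_{K}$, $-6w_{K}$ and $1/(1-p^{m-1})$ from Theorem \ref{thm:main}, the normalizing constant and interpolation factor in the classical Coleman--Ihara formula, the Gauss-sum and conductor data coming both from the functional equation and from the twisted reciprocity law for $\chi^{(d_{K})}_{m}$, and the constant in Gross's factorization must all combine to the single clean coefficient $3w_{K}$ in both parities, with $d_{K}^{m}/g(\chi_{K})$ surviving in exactly the even case. In particular, verifying that the Gauss sum introduced by the functional equation in the odd case cancels against Gross's constant while the even case retains $d_{K}^{m}/g(\chi_{K})$, pinning down the correct Coleman--Ihara normalization of the variant character $\chi^{(d_{K})}_{m}$, and checking that the sign from $1/(1-p^{m-1})=-1/(p^{m-1}-1)$ together with a factor $1/2$ in the even-case matching produces $+3w_{K}$ rather than $\pm 6w_{K}$, are the delicate points where sign or Gauss-sum errors are most likely to enter.
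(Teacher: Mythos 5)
Your skeleton for the odd case (Theorem \ref{thm:main} $+$ the classical Coleman--Ihara formula \cite[p.105, (Col2)]{Ih86} $+$ Gross) is exactly the paper's route, but your ``decisive step'' misstates Gross's factorization and then invokes a tool that does not exist in the form you need. As used in the paper, Gross's theorem on the cyclotomic line reads $L_{p}^{\mathrm{Katz}}(m,\omega^{1-m})=L_{p}(1-m,\chi_{K}\omega^{m})\,L_{p}(m,\omega^{1-m})$, with one Kubota--Leopoldt factor evaluated at $1-m$, not both at $m$. Consequently no conversion of the Bernoulli prefactor to $s=m$ is needed: the interpolation identity you yourself quote, $L_{p}(1-m,\chi_{K}\omega^{m})=-(1-p^{m-1})B_{m,\chi_{K}}/m$ (using $\chi_{K}(p)=1$ since $p$ splits), feeds \emph{directly} into Gross's formula together with the Coleman--Ihara value $L_{p}(m,\omega^{1-m})\phi^{\mathrm{CW}}_{m}(\boldsymbol{u})$. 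Your detour through ``the $p$-adic functional equation'' to replace $L_{p}(1-m,\chi_{K}\omega^{m})$ by $L_{p}(m,\omega^{1-m}\chi_{K})$ would fail: Kubota--Leopoldt $p$-adic $L$-functions admit no elementary functional equation relating $s$ and $1-s$; the $p$-adic substitute for the archimedean functional equation at $s=m$ is precisely Coleman's $p$-adic polylogarithm formula, a deep input rather than a formal manipulation. Delete that step and the odd case closes as in the paper.

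The genuine gap is the even case, where you dispose of the reciprocity law for $\chi_{m}^{(d_{K})}$ with ``establish it by the same Coleman-power-series and Coates--Wiles argument.'' That law is the heart of the proof and is not a routine conductor-$d_{K}$ twist of (Col2). The paper obtains it from three specific ingredients: (i) the polylogarithmic Coleman--Ihara formula of Nakamura--Sakugawa--Wojtkowiak \cite[Theorem 4.4 (1)]{NSW17}, applied after the decomposition $\chi_{m}^{(d_{K})}=d_{K}^{m-1}\sum_{\chi_{K}(a)=1}\chi_{m}^{\zeta_{d_{K}}^{a}}$ into restricted $p$-adic polylogarithmic characters; (ii) Coleman's formula $g(\chi_{K})d_{K}^{-1}\sum_{a}\chi_{K}(a)\mathrm{Li}^{(p)}_{m}(\zeta_{d_{K}}^{-a})=L_{p}(m,\chi_{K}\omega^{1-m})$ \cite[p.172 (3)]{Col82}; and (iii) the antisymmetry $\mathrm{Li}^{(p)}_{m}(\zeta_{d_{K}}^{a})=-\mathrm{Li}^{(p)}_{m}(\zeta_{d_{K}}^{-a})$, which is what produces the factor $\tfrac{1}{2}$ in $\frac{\chi_{m}^{(d_{K})}(\mathrm{rec}(\boldsymbol{u}))}{p^{m-1}-1}=\frac{d_{K}^{m}}{2g(\chi_{K})}L_{p}(m,\chi_{K}\omega^{1-m})\phi^{\mathrm{CW}}_{m}(\boldsymbol{u})$ and hence turns the coefficient $-6w_{K}$ of Theorem \ref{thm:main} into the stated $3w_{K}$. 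You flag this $\tfrac{1}{2}$ as a point to check, but your plan contains no mechanism that could produce it, and rebuilding the twisted reciprocity law from scratch amounts to re-proving the cited theorem of \cite{NSW17}; also needed is the trace compatibility $\mathrm{Tr}_{\mathbb{Q}_{p}(\mu_{d_{K}})/\mathbb{Q}_{p}}(\phi^{\mathrm{CW}}_{m,\mathbb{Q}_{p}(\zeta_{d_{K}})}(\boldsymbol{u}))=\phi^{\mathrm{CW}}_{m}(\boldsymbol{u})$, which you never address. Without these named inputs, your even case is a placeholder rather than a proof.
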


We explain the strategy to prove Theorem \ref{thm:main}: The critical ingredient is Kersey's theorem \cite[Theorem 4.1]{Ke80}, which compares absolute values of cyclotomic units and elliptic units, cf. Theorem \ref{thm:Kersey}. There the images of elliptic units under a certain regulator map $\rho$ (cf. Section \ref{2.2}) are described in terms of cyclotomic units by specializing the regulator map at each (complex-valued) character. We use his result to trace a somewhat ``reverse'' direction of Kersey's theorem: That is, we use Kersey's theorem to express images of elliptic units under a certain \'etale regulator map $\rho_{\et}$ (cf. Section \ref{2.3}) in terms of cyclotomic units to prove Theorem \ref{thm:main_measure}, which is a certain congruence between cocycle classes. Theorem \ref{thm:main} is then obtained by specializing Theorem \ref{thm:main_measure} at powers of the $p$-adic cyclotomic character. 

This paper is organized as follows: In Section \ref{2.1}, we recall some facts on the fundamental theta function. In Section \ref{2.2}, we introduce Kersey's theorem and perform some computations used in the following section. Then we introduce a certain regulator map $\rho_{\et}$ in Section \ref{2.3}. In Section \ref{3.1}, we prove Theorem \ref{thm:main_measure} and Theorem \ref{thm:main} along the strategy explained above. We give proofs of two corollaries in the last Section \ref{3.2}.

\subsection*{Notation.}

\begin{itemize} 
\item As was mentioned, we fix an algebraic closure $\overline{K}$ of $K$ and an embedding $\overline{K} \hookrightarrow \mathbb{C}$. For a subfield $F \subset \mathbb{C}$ and $n \geq 1$, we denote its ring of integers by $O_{F}$, the group of roots of unity in $F$ by $\mu(F)$ and the group of $n$-th roots of unity in $\mathbb{C}$ by $\mu_{n}$. The group $\mu_{n}$ is generated by $\zeta_{n} \coloneqq e^{\frac{2 \pi i}{n}}$. For a finite extension $L/F$, we denote the norm map $L \to K$ by $\mathrm{N}_{L/K}$. For a nonzero ideal $\mathfrak{a}$ of $O_{K}$, we denote the cardinality of the quotient ring $O_{K}/\mathfrak{a}$ by $\mathrm{N}\mathfrak{a}$. For $\sigma \in G_{K}$ and $\alpha \in \overline{K}$, we often write $\sigma\alpha$ as $\alpha^{\sigma}$.

\item $B_{1}(x) \coloneqq x-\frac{1}{2} \in \mathbb{Q}[x]$ denotes the first Bernoulli polynomial. For an element $a \in \mathbb{Q}/\mathbb{Z}$, $\Bab{a}  \in \mathbb{Q}$ denotes its fractional part, i.e.  a unique representative of $a$ satisfying both $0 \leq \Bab{a} <1$ and $a \equiv \Bab{a} \bmod  \mathbb{Z}$. 
\end{itemize}

\section{Preliminaries}\label{2}

\subsection{Fundamental theta function}\label{2.1}

In this section, we recall some basic results on the fundamental theta function \cite[Chapter II, 2.1 (7)]{dS87} and give a proof of Lemma \ref{lmm:epsilon}. First, let
\[
A(L) \coloneqq \frac{\mathrm{Area}(\mathbb{C}/L)}{\pi} \quad \text{and} \quad
s_{2}(L) \coloneqq \lim_{s \to +0} \sum_{0 \neq \omega \in L} \frac{1}{\omega^{2} | \omega |^{2s} }.
\] We define a $\mathbb{R}$-linear function $\eta(z, L) \colon \mathbb{C} \to \mathbb{C}$ to be 
$
\eta(z, L) \coloneqq A(L)^{-1} \bar{z}+s_{2}(L)z.
$

\begin{dfn}[cf. de Shalit {\cite[Chapter II, 2.1 (7) and 2.3]{dS87}}]\label{dfn:theta}\,
    \begin{enumerate}
    \item We define the fundamental theta function associated to $L$ as
    \[
    \theta(z, L) \coloneqq \Delta(L) \cdot e^{-6\eta(z, L)z} \cdot \sigma(z, L)^{12}
    \] where $\Delta(L)$ is the discriminant of the lattice $L$ and $\sigma(z, L)$ is the usual Weierstrass $\sigma$-function 
    \[
    \sigma(z, L)=z \prod_{0 \neq \omega \in L}(1-\frac{z}{\omega})\exp \left( \frac{z}{\omega}+\frac{1}{2}\left( \frac{z}{\omega} \right)^{2} \right).
    \]
    \item Let $\mathfrak{a}=(\alpha)$ be a nontrivial ideal of $O_{K}$. We define a rational function $\theta_{\mathfrak{a}}$ on $E$ by
    \[
    \theta_{\mathfrak{a}}(z) \coloneqq  
    \frac{\theta(z, L)^{\mathrm{N}\mathfrak{a}}}{\theta(z, \mathfrak{a}^{-1}L)}
    =
    \alpha^{-12}\Delta(L)^{\mathrm{N}\mathfrak{a}-1}\prod_{P \in E[\mathfrak{a}]\setminus O}\left( x(z)-x(P) \right)^{-6}.
    \] Here, $x \colon E \to \mathbb{P}^{1}$ is a finite morphism corresponding to the $x$-coordinate of $E$. Note that the function $\theta_{\mathfrak{a}}$ is defined over $K$.
    \end{enumerate}
\end{dfn}

\begin{rmk}
The function $e^{\frac{\eta(z, L)z}{2}} \cdot \sigma(z, L)$ is called \emph{Klein form} (associated to $L$) and denoted by $\mathfrak{k}(z, L)$ in Kubert-Lang \cite[p. 27]{KL81}. According to their notation, the fundamental theta function can be written as $
\theta(z, L) = \Delta(L) \cdot \mathfrak{k}(z, L)^{12}
$, and this also coincides with the $12$-th power of the \emph{Siegel function} in their book.
\end{rmk}

First, we observe the following:

\begin{lmm}\label{lmm:theta_val}
    Let $\omega \in E[p^{\infty}](\overline{K})$ be a nonzero $p$-primary torsion point of $E$. Then the following assertions hold.  
    \begin{enumerate}
        \item Let $\widetilde{\omega} \in \mathbb{C}$ be an arbitrary lift of $\omega$. Then $\theta(\widetilde{\omega}, L)$ is contained in $K(p^{\infty})^{\times}$.
        \item Let  $\widetilde{\omega}$ and $\widetilde{\omega}'$ be arbitrary lifts of $\omega$. Then the ratio
        \[
        \frac{\theta(\widetilde{\omega}, L)}{\theta(\widetilde{\omega}', L)}
        \] is contained in $\mu_{p^{\infty}}$. In particular, the value of the fundamental theta function 
        \[
        \theta(\widetilde{\omega}, L) \in \widehat{K(p^{\infty})^{\times}} \coloneqq  \varprojlim_{n} K(p^{\infty})^{\times}/K(p^{\infty})^{\times p^{n}}
        \] in the $p$-adic completion is independent of the choice of a lift. In the following, we denote this element by $\theta(\omega, L)$. With this notation, we have 
        $
         \theta(\omega_{n}, L)=\theta(\Omega/p^{n}, L).
        $
        \item Let $\mathfrak{a}$ be a nontrivial ideal of $O_{K}$ which is prime to $p$ and $(\frac{K(p^{\infty})/K}{\mathfrak{a}}) \in \gal(K(p^{\infty})/K)$ the corresponding Artin symbol. Then we have
        \[
            \left( \frac{K(p^{\infty})/K}{\mathfrak{a}} \right)\left( \theta(\omega, L)
            \right)
            =\theta(\omega, \mathfrak{a}^{-1}L).
        \] In particular, if $\omega$ is annihilated by $p^{n}$ for some $n \geq 1$, then $\theta(\omega, L)$ is contained in the $\gal(K(p^{\infty})/K(p^{n}))$-invariant subspace of $\widehat{K(p^{\infty})^{\times}}$.
        \item  If $p \cdot \omega \neq 0$, then we have
        \[
            \prod_{P \in E[p](\overline{K})} \theta(\omega+P, L)=\theta(p \cdot \omega, L).
        \]
    \end{enumerate}
\end{lmm}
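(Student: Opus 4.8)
The plan is to handle the four assertions in the logical order (2), then (3) together with (1), and finally (4): the well-definedness up to $p$-power roots of unity in (2) is a purely analytic fact that feeds into everything else, while (1) is best seen as a consequence of the reciprocity law recorded in (3).

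For (2) I would argue by a direct computation with the quasi-periodicity of the Weierstrass $\sigma$-function. Writing $\theta(z,L)=\Delta(L)e^{-6\eta(z,L)z}\sigma(z,L)^{12}$ and using the $\mathbb{R}$-linearity of $\eta(\cdot,L)$ together with $\sigma(z+\lambda,L)=\psi(\lambda)e^{\eta(\lambda,L)(z+\lambda/2)}\sigma(z,L)$ for $\lambda\in L$, the sign $\psi(\lambda)=\pm1$ disappears after taking twelfth powers and the prefactor $e^{-6\eta(z,L)z}$ cancels the $z$- and $\lambda/2$-cross terms, leaving the clean identity $\theta(z+\lambda,L)/\theta(z,L)=\exp\!\bigl(6A(L)^{-1}(z\bar\lambda-\bar z\lambda)\bigr)$. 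Taking $z=\widetilde{\omega}$ a lift of a $p^{n}$-torsion point and $\lambda=c\omega_{1}+d\omega_{2}\in L$, the bilinear expression collapses via $z\bar\lambda-\bar z\lambda=\frac{ad-bc}{p^{n}}(\omega_{1}\bar\omega_{2}-\bar\omega_{1}\omega_{2})$ and $\omega_{1}\bar\omega_{2}-\bar\omega_{1}\omega_{2}=\pm 2i\cdot\mathrm{Area}(\mathbb{C}/L)$ to $\exp(\mp 12\pi i(ad-bc)/p^{n})=\zeta_{p^{n}}^{\mp 6(ad-bc)}$; since two lifts of $\omega$ differ by such a $\lambda$, their $\theta$-values differ by a $p^{n}$-th root of unity, proving (2). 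The care needed here is bookkeeping of the orientation sign in the Legendre-type relation and the cancellation $A(L)^{-1}=\pi/\mathrm{Area}$ against the area.

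For (3), which I regard as the conceptual crux, I would invoke the reciprocity law of complex multiplication for the normalized theta function, following de Shalit \cite[Chapter II]{dS87}. The mechanism is that $\theta_{\mathfrak{a}}=\theta(z,L)^{\mathrm{N}\mathfrak{a}}/\theta(z,\mathfrak{a}^{-1}L)$ is a rational function defined over $K$, so its values at torsion transform geometrically; combined with the main theorem of CM, which identifies $E^{\sigma}$ with $\mathbb{C}/\mathfrak{a}^{-1}L$ for $\sigma=\bigl(\tfrac{K(p^{\infty})/K}{\mathfrak{a}}\bigr)$, this yields $\theta(\omega,L)^{\sigma}=\theta(\omega,\mathfrak{a}^{-1}L)$. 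Granting this transformation law, assertion (1) follows: $\theta(\widetilde{\omega},L)$ is first seen to be algebraic from the $K$-rationality of $\theta_{\mathfrak{a}}$ together with (2) — choosing $\mathfrak{a}=(\alpha)$ with $\alpha\equiv 1\bmod p^{n}$ gives $\theta(\widetilde{\omega},L)^{\mathrm{N}\mathfrak{a}-1}\in K(E[p^{\infty}])^{\times}$ — and then the transformation law shows it is fixed by $G_{K(p^{\infty})}$, hence lies in $K(p^{\infty})^{\times}$; here the homogeneity $\theta(uz,L)=\theta(z,L)$ for $u\in O_{K}^{\times}$ is exactly what effects the descent from $K(E[p^{\infty}])$ to the ray class field $K(p^{\infty})$. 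The closing ``in particular'' is then immediate: for $\omega\in E[p^{n}]$ and $\mathfrak{a}=(\alpha)$ with $\alpha\equiv 1\bmod p^{n}$ we have $\theta(\omega,\mathfrak{a}^{-1}L)=\theta(\alpha\omega,L)=\theta(\omega,L)$ since $[\alpha]\omega=\omega$, and such Artin symbols topologically generate $\gal(K(p^{\infty})/K(p^{n}))$.

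For (4) I would compare both sides as functions of a lift $z$ of $\omega$. Using the quasi-period factor from (2), both $f(z)=\prod_{t\in\frac{1}{p}L/L}\theta(z+t,L)$ and $g(z)=\theta(pz,L)$ acquire the identical factor $\exp(6A(L)^{-1}p^{2}(z\bar\lambda-\bar z\lambda))$ under $z\mapsto z+\lambda$, so $f/g$ is a genuine elliptic function; moreover both $f$ and $g$ vanish to order $12$ exactly along $E[p]$ and have no poles, so $f/g$ is a nonvanishing holomorphic elliptic function, i.e.\ a constant $C$. Comparing leading Laurent coefficients at $z=0$, where $\theta(w,L)\sim\Delta(L)w^{12}$, reduces the claim $C=1$ to the standard distribution identity $\prod_{P\in E[p]\setminus\{O\}}\theta(P,L)=p^{12}$, equivalently $\Delta(\tfrac{1}{p}L)=p^{12}\Delta(L)$, which I would cite from de Shalit; the hypothesis $p\cdot\omega\neq O$ is needed only so that the right-hand side $\theta(p\omega,L)$ is evaluated at a nonzero torsion point and is well-defined by (1) and (2). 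The delicate point in (4) is pinning the constant down to exactly $1$ rather than a root of unity times a power of $p$, and I expect this constant determination, along with the reciprocity input for (3), to be where the real work lies.
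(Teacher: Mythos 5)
Your handling of parts (2) and (3) is essentially sound: the quasi-periodicity computation in (2) is a correct, self-contained verification of the Klein form property \textbf{K2} that the paper simply cites from Kubert--Lang, and in (3) you invoke the same CM reciprocity law (de Shalit, Ch.~II; Kubert--Lang, p.~236) that the paper does, which suffices because assertion (3) lives in $\widehat{K(p^{\infty})^{\times}}$, where $\mu_{p^{\infty}}$ dies. The genuine gap is in (1), and it arises precisely because you try to deduce (1) from (3). The reciprocity law for the non-periodic function $\theta(z,L)$ is available only up to $p$-power roots of unity (equivalently: exactly, but only for the powers $\theta(\widetilde{\omega},L)^{p^{n}}$, which are values of honest modular functions). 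So from your transformation law all you can conclude is that $\sigma\bigl(\theta(\widetilde{\omega},L)\bigr)/\theta(\widetilde{\omega},L)\in\mu_{p^{\infty}}$ for $\sigma\in G_{K(p^{\infty})}$, i.e.\ that $\theta(\widetilde{\omega},L)$ generates a Kummer extension of $K(p^{\infty})$ of $p$-power degree; this does not show that the extension is trivial, which is exactly what (1) asserts. Your algebraicity step suffers the same loss of information: for $\alpha\equiv 1\bmod p^{n}$ one has $p^{n}\mid \mathrm{N}\mathfrak{a}-1$, so the $K$-rationality of $\theta_{\mathfrak{a}}$ only controls the power $\theta(\widetilde{\omega},L)^{\mathrm{N}\mathfrak{a}-1}$, whose exponent is divisible by $p^{n}$, and says nothing about $\theta(\widetilde{\omega},L)$ itself. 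The paper closes this gap with two ingredients you never use: Nakamura's theorem that $\theta(\widetilde{\omega},L)$ itself lies in $K(E[p^{\infty}])^{\times}$, and the observation that $\gal(K(E[p^{\infty}])/K(p^{\infty}))$ injects into $O_{K}^{\times}$, hence has order prime to $p$ since $p\geq 5$; Kummer theory (using $\theta(\widetilde{\omega},L)^{p^{n}}\in K(p^{\infty})^{\times}$ and $\mu_{p^{n}}\subset K(p^{\infty})$) then forces $\theta(\widetilde{\omega},L)\in K(p^{\infty})^{\times}$. That your argument nowhere uses $p\geq 5$ (i.e.\ $p\nmid w_{K}$) is the telltale sign that a step is missing.

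There is a second, smaller problem in (4). Your elliptic-function argument (identical automorphy factors, divisor of order $12$ along $E[p]$, hence $f/g$ constant) is the standard proof of the distribution relation, which the paper instead cites from Kubert--Lang (Ch.~2, Thm.~4.1); but your determination of the constant is circular. Since $g(z)=\theta(pz,L)=\theta(z,p^{-1}L)$ by homogeneity, the statement $C=1$ \emph{is} the distribution relation, and comparing leading Laurent coefficients shows it is equivalent to $\prod_{O\neq P\in E[p]}\theta(P,L)=\Delta(p^{-1}L)/\Delta(L)=p^{12}$. This last identity is \emph{not} equivalent to the homothety formula $\Delta(p^{-1}L)=p^{12}\Delta(L)$, which is a trivial degree count; passing from one to the other already presupposes the distribution relation you are trying to prove, so the fact you propose to cite does not pin down $C$. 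Note also what precision is actually needed: a root-of-unity ambiguity in $C$ is harmless, since $\mu_{p^{\infty}}$ dies in $\widehat{K(p^{\infty})^{\times}}$ (this is why the paper is content with the relation modulo $\mu_{p^{\infty}}$), but a stray power of $p$ would falsify the lemma, because $p$ does not become infinitely $p$-divisible in $K(p^{\infty})^{\times}$ (the extension $K(\mu_{p},\sqrt[p]{p})/K$ is non-abelian). Pinning $C$ down to a root of unity is exactly the content of the cited Kubert--Lang theorem and requires a genuine computation, not the homothety of $\Delta$.
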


\begin{comment}In fact, for every pair of integers $(a,b) \in \mathbb{Z}^{2} \setminus p^{n}\mathbb{Z}^{2}$, the Klein form $\mathfrak{k}(a\tau+b, \mathbb{Z}+\mathbb{Z}\tau)$ defines a modular form of level $2p^{2n}$ by \cite[p.29, Theorem 1.1]{KL81} and every coefficient of the $q$-expansion of $\theta(a\tau+b, \mathbb{Z}+\mathbb{Z}\tau)$ is contained in $\mathbb{Q}(\mu_{p^{\infty}})$ by \cite[p.29, {\bf K4}]{KL81}. Hence it follows by the $q$-expansion principle that the concerned modular form is defined over $\mathbb{Q}(\mu_{p^{\infty}})$, concluding that $\theta(\widetilde{\omega}, L) \in K(E[p^{\infty}])^{\times}$. 
\end{comment}

\begin{proof}
    (1) The assertion that $\theta(\widetilde{\omega}, L) \in K(E[p^{\infty}])^{\times}$ is proved in \cite[(2.9)]{Na95} for general elliptic curves over number fields. Moreover, by applying \cite[Chapter 11, Theorem 1.1]{KL81} to the Fricke family associated to Siegel functions \cite[Chapter 11, p.235]{KL81}, we also obtain that $
    \theta(\widetilde{\omega}, L)^{p^{n}} \in K(p^{\infty})^{\times}
    $ if $p^{n}\widetilde{\omega} \in L$. Now, if $\theta(\widetilde{\omega}, L)$ is not contained in $K(p^{\infty})^{\times}$, then $p$ would divide the degree $[K(E[p^{\infty}]) : K(p^{\infty})]$ by Kummer theory. However, since the Galois group $\gal(K(E[p^{\infty}])/K(p^{\infty}))$ injects into $O_{K}^{\times} \subset \mathrm{Aut}(T_{p}E)$, its order is prime to $p \geq 5$. This proves the first assertion.
    
    (2) The assertion follows from \cite[p. 28, {\bf K2}]{KL81}. (3) Let $n \geq 1$ be an integer such that $p^{n}$ annihilates $\omega$. By \cite[p. 236]{KL81}, we have $\left( \frac{K(p^{\infty})/K}{\mathfrak{a}} \right) \theta(
    \widetilde{\omega}, L)^{p^{n}}
    =
    \theta(\widetilde{\omega}, \mathfrak{a}^{-1}L)^{p^{n}}
    $ in $K(p^{\infty})^{\times}$, which implies the desired equality modulo $\mu_{p^{\infty}}$. Since the image of $\mu_{p^{\infty}}$ is trivial in $\widehat{K(p^{\infty})^{\times}}$, the assertion follows. (4) The distribution relation \cite[p. 43, Theorem 4.1]{KL81} implies the desired equality modulo $\mu_{p^{\infty}}$. The assertion follows from the same argument as the proof of (3).
\end{proof}

Now we give a proof of  Lemma \ref{lmm:epsilon}. Recall that we have defined
\[
\epsilon_{\boldsymbol{m}, n} \coloneqq \prod_{\substack{0 \leq a,b<p^{n} \\ p \nmid  \gcd(a,b)}}  \theta(a\omega_{\p, n}+b\omega_{\bar{\p}, n}, L)^{a^{m_{1}-1}b^{m_{2}-1}} \in K(p^{\infty})^{\times}/K(p^{\infty})^{\times p^{n}}.
\]

\begin{proof}[Proof of Lemma \ref{lmm:epsilon}] 
    (1) The first part of the assertion follows from Lemma \ref{lmm:theta_val} (3). Let $\mathfrak{a}$ be a nontrivial ideal of $O_{K}$ which is prime to $p$. By Lemma \ref{lmm:theta_val} (3), we have
    \begin{align*}
        \left( \frac{K(p^{\infty})/K}{\mathfrak{a}} \right)(\epsilon_{\boldsymbol{m}, n})
         & =  \prod_{\substack{0 \leq a,b<p^{n} \\ p \nmid  \gcd(a,b)}}  \theta(a\omega_{\p, n}+b\omega_{\bar{\p}, n}, \mathfrak{a}^{-1}L)^{a^{m_{1}-1}b^{m_{2}-1}} \\
         & = \prod_{\substack{0 \leq a,b<p^{n} \\ p \nmid  \gcd(a,b)}}  \theta(a\varepsilon_{\p}(\sigma_{\mathfrak{a}})\omega_{\p, n}+b\varepsilon_{\bar{\p}}(\sigma_{\mathfrak{a}})\omega_{\bar{\p}, n}, \mathfrak{a}^{-1}L)^{a^{m_{1}-1}b^{m_{2}-1}}
           \\
         & = \left( \prod_{\substack{0 \leq a,b<p^{n} \\ p \nmid  \gcd(a,b)}}  \theta(a\omega_{\p, n}+b\omega_{\bar{\p}, n}, L)^{a^{m_{1}-1}b^{m_{2}-1}} \right)^{\varepsilon_{\cyc}\varepsilon^{-\boldsymbol{m}}\left( \left( \frac{K(p^{\infty})/K}{\mathfrak{a}} \right) \right)} \\
         & =(\epsilon_{\boldsymbol{m}, n})^{\varepsilon_{\cyc}\varepsilon^{-\boldsymbol{m}}\left( \left( \frac{K(p^{\infty})/K}{\mathfrak{a}} \right) \right)},
    \end{align*} which is the desired equality. The assertion of (2) follows from Lemma \ref{lmm:theta_val} (4).
\end{proof}

\subsection{Elliptic units and cyclotomic units}\label{2.2}

In this section, we introduce a result of Kersey \cite[Theorem 4.2]{Ke80}, which explicitly compares elliptic units with cyclotomic units inside every abelian extension $H$ of $\mathbb{Q}$ containing $K$. Since we need this result when $H$ is equal to $K(\mu_{p^{n}})$ for some $n$, we only treat those cases in the following. 

In the following, let $d_{K}$ be the absolute value of the discriminant of $K$. Throughout this section, we fix an arbitrary integer $n \geq 1$. We use the mod-$p^{n}$ cyclotomic character 
\[
\varepsilon_{\cyc} \bmod p^{n} \colon
\gal(K(\mu_{p^{n}})/K) \xrightarrow{\sim} (\mathbb{Z}/p^{n}\mathbb{Z})^{\times}.
\] to identify two groups. For $c \in (\mathbb{Z}/p^{n}\mathbb{Z})^{\times}$, the corresponding element in the left-hand side will be denoted by $\sigma_{c}$. We define the (anti-equivariant) regulator map by
\[
\rho \colon K(\mu_{p^{n}})^{\times} \to \mathbb{R}[\gal(K(\mu_{p^{n}})/K)] : \; 
\alpha \mapsto
\sum_{\sigma \in \gal(K(\mu_{p^{n}})/K)}
\log | \sigma \alpha | \sigma,
\] which is often used to prove Dirichlet's unit theorem. 

\begin{thm}[cf. Kersey {\cite[Theorem 4.1]{Ke80}}]\label{thm:Kersey}
We have
\begin{align*}
& \rho \left(
\mathrm{N}_{K(p^{n})/K(\mu_{p^{n}})} \left(
\theta \left( \Omega/p^{n}, L
\right)^{p^{n}} 
\right) \right) \\
= &
-3p^{n} {\sum_{c \in (\mathbb{Z}/p^{n}\mathbb{Z})^{\times}}}
{\sum_{\substack{a,b \in \mathbb{Z}/d_{K}p^{n}\mathbb{Z} \\ ab \equiv c \bmod p^{n}}}}\ab ( \chi_{K}(a)+\chi_{K}(b) ) 
B_{1}\left( \Bab{\frac{a}{dp^{n}}} \right)
\log | 1-\zeta_{d_{K}p^{n}}^{b} |  \sigma_{c}.
\end{align*} Note that $\theta(\Omega/p^{n}, L)^{p^{n}} $ is contained in $K(p^{n})^{\times}$ by \cite[Chapter 11, Theorem 1.1]{KL81}.
\end{thm}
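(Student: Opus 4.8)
The plan is to re-derive Kersey's identity from the explicit $q$-product expansion of the Siegel function (equivalently, from the second Kronecker limit formula), which is the analytic engine turning $\log|\theta(\cdot,L)|$ at torsion points into explicit finite sums. Conceptually the identity is the regulator-level shadow of the factorization $\zeta_{K}(s)=\zeta(s)L(s,\chi_{K})$: the left-hand side is assembled from the elliptic units attached to the CM lattice $L$, the right-hand side from the cyclotomic units $1-\zeta_{d_{K}p^{n}}^{b}$, and the quadratic character $\chi_{K}$ is exactly what mediates between the $K$-side and the $\mathbb{Q}$-side. So the whole argument amounts to making both sides of the limit formula explicit and matching the resulting combinatorial data.

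First I would normalize the period lattice. Writing $O_{K}=\mathbb{Z}+\mathbb{Z}\tau_{K}$ with $\tau_{K}$ in the upper half-plane and using $\theta(cz,cL)=\theta(z,L)$, I may replace $L=\Omega O_{K}$ by the normalized lattice $\mathbb{Z}+\mathbb{Z}\tau_{K}$ and work with the Klein forms and Siegel functions of Kubert--Lang \cite{KL81}. Since $\theta(z,L)=\Delta(L)\,\mathfrak{k}(z,L)^{12}$, we have $\log|\theta|=\log|\Delta|+12\log|\mathfrak{k}|$, and the Kronecker limit formula expresses $\log|\mathfrak{k}|$ at a torsion argument $(u\tau_{K}+v)$, with $(u,v)\in(\tfrac{1}{d_{K}p^{n}}\mathbb{Z}/\mathbb{Z})^{2}$, as a value of a real-analytic Eisenstein series whose closed form combines a leading Bernoulli term with the ``finite'' factors $\log|1-\zeta^{b}|$ coming from the infinite product. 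This step also explains the overall constant: the exponent $12$, reduced by the normalization passing from $\log|\cdot|^{2}$ to $\log|\cdot|$, yields the factor $3$, while the exponent $p^{n}$ on $\theta(\Omega/p^{n},L)$ survives verbatim as the $p^{n}$ on the right.

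Next I would dispose of the norm and the regulator simultaneously. The subgroup $\gal(K(p^{n})/K(\mu_{p^{n}}))$ is characterized by $\varepsilon_{\cyc}\equiv 1 \bmod p^{n}$, equivalently $\varepsilon_{\p}\varepsilon_{\bar{\p}}\equiv 1$, and by Lemma \ref{lmm:theta_val} (3) its elements permute the theta values $\theta(a\omega_{\p,n}+b\omega_{\bar{\p},n},L)$. Hence $\rho$ composed with $\mathrm{N}_{K(p^{n})/K(\mu_{p^{n}})}$ collapses into a single sum of $\log|\theta|$ over all of $\gal(K(p^{n})/K)$, reorganized according to the residue $c\in(\mathbb{Z}/p^{n}\mathbb{Z})^{\times}$ of the cyclotomic character. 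Parametrizing the conjugates of the torsion argument by their action $(a,b)$ on $(T_{\p}E,T_{\bar{\p}}E)$, the fibre over $\sigma_{c}$ is precisely $\{ab\equiv c \bmod p^{n}\}$, which produces the index set on the right; the finer level $d_{K}p^{n}$ in the summand is forced by the denominator of $\tau_{K}$, i.e. by the lattice normalization.

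Finally I would match the inner sum with the cyclotomic side. Feeding the limit-formula expansion into the double sum and invoking the classical formula for $\log|1-\zeta_{d_{K}p^{n}}^{b}|$ in terms of the Hurwitz/Bernoulli data $B_{1}(\Bab{\frac{a}{d_{K}p^{n}}})$, the two expansions are matched through a Dedekind-sum-type reorganization; the contributions of the two embeddings of $K$ (equivalently the $\p$- and $\bar{\p}$-components) symmetrize into the combination $\chi_{K}(a)+\chi_{K}(b)$, since $\chi_{K}$ is the genus character recording the behaviour of the level $d_{K}p^{n}$ over $K$. I expect the decisive obstacle to be exactly this last bookkeeping: tracking the lattice normalization through the level $d_{K}p^{n}$, checking that the norm/regulator average lands on the untwisted coset $\{ab\equiv c \bmod p^{n}\}$, and pinning down the constant $-3p^{n}$ together with the sign and the symmetrization producing $\chi_{K}(a)+\chi_{K}(b)$ --- rather than any difficulty in the analytic input, which is entirely classical (and for which I would ultimately appeal to \cite{Ke80}).
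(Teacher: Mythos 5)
The paper does not prove this statement analytically at all: Theorem \ref{thm:Kersey} \emph{is} Kersey's Theorem 4.1, and the paper's entire ``proof'' (the remark immediately following the statement) is a notational dictionary. One takes $H=K(\mu_{p^{n}})$, whose conductor over $K$ is $\mathfrak{n}=(p^{n})$, so that $K(\mathfrak{n})=K(p^{n})$ and $N(\mathfrak{n})=p^{n}$; for the trivial ray class one may take $\mathfrak{c}=O_{K}$, and the homogeneity $\theta(cz,cL)=\theta(z,L)$ applied with $c=\Omega/p^{n}$ identifies the Ramachandra--Robert invariant $g_{\mathfrak{n}}(C)=\theta(1,\mathfrak{n}\mathfrak{c}^{-1})^{N(\mathfrak{n})}$ with $\theta(\Omega/p^{n},L)^{p^{n}}$, hence $g_{H}(C)=\mathrm{N}_{K(p^{n})/K(\mu_{p^{n}})}(\theta(\Omega/p^{n},L)^{p^{n}})$, which is exactly the argument of $\rho$ on the left-hand side. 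Your proposal instead sets out to re-derive the identity from the $q$-expansion of the Siegel function and the second Kronecker limit formula --- that is, to reprove Kersey's theorem itself rather than to translate it.

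As a self-contained argument this has a genuine gap, and you name it yourself: the ``decisive obstacle'' you defer (the sign, the constant $-3p^{n}$, and the symmetrization producing $\chi_{K}(a)+\chi_{K}(b)$) is precisely the content of Kersey's paper, not bookkeeping. The limit formula expresses $\log\abs{\theta}$ at a torsion point as an $L$-value over $K$; to reach the right-hand side one must factor the relevant Hecke $L$-functions through $\zeta(s)L(s,\chi_{K})$ and convert the Dirichlet $L$-values back into logarithms of cyclotomic numbers, and it is this comparison --- not the parametrization of Galois conjugates --- that produces the index set $a,b\in\mathbb{Z}/d_{K}p^{n}\mathbb{Z}$ with $ab\equiv c\bmod p^{n}$ and the weight $\chi_{K}(a)+\chi_{K}(b)$. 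Your sketch conflates the two sides here: the conjugates of the elliptic value are indexed by pairs modulo $p^{n}$ (modulo $O_{K}^{\times}$), and the finer level $d_{K}p^{n}$ cannot be obtained from them by any ``lattice normalization''; likewise your accounting of the constant (the exponent $12$ halved to give $3$) does not parse. Since your stated fallback is in any case to ``ultimately appeal to \cite{Ke80}'', the honest and complete route is the paper's: invoke Kersey's Theorem 4.1 directly for $H=K(\mu_{p^{n}})$ and the trivial ray class, and verify the dictionary above --- which is the one step your proposal never pins down.
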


\begin{rmk}
Theorem 4.1 of \cite{Ke80} is stated by using the Ramachandra-Robert invariant $g_{H}(C)$, which is defined to be the image under the norm map $\mathrm{N}_{K(\mathfrak{n})/H}$ of 
\[
g_{\mathfrak{n}}(C)=\left( \mathfrak{k}(1, \mathfrak{n}\mathfrak{c}^{-1})^{12}\Delta(\mathfrak{n}\mathfrak{c}^{-1}) \right)^{N(\mathfrak{n})}=
\theta(1, \mathfrak{n}\mathfrak{c}^{-1})^{N(\mathfrak{n})}.
\] Here, $\mathfrak{n}$ is the conductor of $H/K$, $N(\mathfrak{n})$ is the smallest positive integer contained in $\mathfrak{n}$, $C$ is an element of the ray class group modulo $\mathfrak{n}$ and $\mathfrak{c}$ is an arbitrary ideal contained in $C$. Since the fundamental theta function satisfies $\theta(z, L)=\theta(cz, cL)$ for every $c \in \mathbb{C}^{\times}$ \cite[Chapter II, 2.1]{dS87}, the original theorem immediately implies the assertion of Theorem \ref{thm:Kersey}.
\end{rmk}

We also have the multiplicative version of Theorem \ref{thm:Kersey}.

\begin{thm}[cf. Kersey {\cite[Theorem 4.2]{Ke80}}]\label{thm:Kersey_mult}

For each $c \in (\mathbb{Z}/p^{n}\mathbb{Z})^{\times}$, we have
\begin{align*}
\mathrm{N}_{K(p^{n})/K(\mu_{p^{n}})}\left(
\theta \left( \Omega/p^{n}, L
\right)^{4p^{n}} 
\right)^{\sigma_{c}} 
\equiv \prod_{b \in \mathbb{Z}/d_{K}p^{n}\mathbb{Z}}(1-\zeta_{d_{K}p^{n}}^{b})^{\nu_{n}(c,b)} \bmod \mu(K(p^{n}))
\end{align*} where we define
\[
\nu_{n}(c,b) \coloneqq -12p^{n}\sum_{\substack{a \in \mathbb{Z}/d_{K}p^{n}\mathbb{Z} \\ ab \equiv c \bmod p^{n}}}\left(\chi _{K}(a)+\chi_{K}(b)\right) B_{1} \left( \Bab{\frac{a}{d_{K}p^{n}}} \right),
\] which is an integer by \cite[Corollary 5.3]{Ke80} for every $b \in \mathbb{Z}/d_{K}p^{n}\mathbb{Z}$.
\end{thm}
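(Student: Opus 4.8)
The plan is to deduce the multiplicative congruence from the additive (regulator) statement of Theorem~\ref{thm:Kersey}, using the principle that the archimedean regulator detects a \emph{global} unit up to roots of unity. Throughout fix $c \in (\mathbb{Z}/p^{n}\mathbb{Z})^{\times}$ and set
\[
\alpha_{c} \coloneqq \mathrm{N}_{K(p^{n})/K(\mu_{p^{n}})}\!\left(\theta(\Omega/p^{n}, L)^{4p^{n}}\right)^{\sigma_{c}}, \qquad \beta_{c} \coloneqq \prod_{b \in \mathbb{Z}/d_{K}p^{n}\mathbb{Z}}(1-\zeta_{d_{K}p^{n}}^{b})^{\nu_{n}(c,b)};
\]
the goal is to show $\alpha_{c}/\beta_{c} \in \mu(K(p^{n}))$. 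First I would check that $\beta_{c}$ actually lies in $K(\mu_{p^{n}})^{\times}$, although it is a priori only an element of $\mathbb{Q}(\mu_{d_{K}p^{n}})^{\times}$. This follows from an elementary invariance of $\nu_{n}$: for $t$ with $\chi_{K}(t)=1$ and $t \equiv 1 \bmod p^{n}$ one has $\nu_{n}(c, tb)=\nu_{n}(c,b)$, because $\chi_{K}(tb)=\chi_{K}(b)$ and the congruence $ab\equiv c \bmod p^{n}$ is unchanged. Since $\gal(\mathbb{Q}(\mu_{d_{K}p^{n}})/K(\mu_{p^{n}}))$ is precisely $\ker\chi_{K}$ acting on $\mu_{d_{K}}$, the product $\beta_{c}$ is invariant under this group and descends to $K(\mu_{p^{n}})$.

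Next I would match the archimedean absolute values. Reading off the coefficient of $\sigma_{c}$ in the group-ring identity of Theorem~\ref{thm:Kersey} and multiplying by $4$ gives $\log|\alpha_{c}| = \sum_{b}\nu_{n}(c,b)\log|1-\zeta_{d_{K}p^{n}}^{b}| = \log|\beta_{c}|$ for the fixed embedding. To control the remaining archimedean places I would use that every embedding of $K(\mu_{p^{n}})$ extending the fixed one has the form $x\mapsto\sigma_{d}x$, so that $\log|\sigma_{d}\alpha_{c}| = \log|\alpha_{dc}|$ must be compared with $\log|\sigma_{d}\beta_{c}|$. A short computation with $\nu_{n}$ — using $\chi_{K}(e)=1$ for $e\equiv 1\bmod d_{K}$ together with the substitution turning $a(e^{-1}b)\equiv c$ into $ab\equiv dc \bmod p^{n}$ — shows $\nu_{n}(dc,b)=\nu_{n}(c,e^{-1}b)$, where $e\equiv d\bmod p^{n}$, $e\equiv 1\bmod d_{K}$ (a legitimate choice of extension of $\sigma_{d}$, since $\beta_{c}$ is already $\ker\chi_{K}$-invariant). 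Combined with Theorem~\ref{thm:Kersey} applied at $dc$, this yields $|\sigma_{d}\alpha_{c}| = |\sigma_{d}\beta_{c}|$ for every $d$, i.e. $|\tau(\alpha_{c}/\beta_{c})| = 1$ for all archimedean places $\tau$.

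It remains to treat the finite primes. Both $\alpha_{c}$ and $\beta_{c}$ are $S$-units, where $S$ consists of the places of $K(\mu_{p^{n}})$ above $p$ together with the archimedean ones: cyclotomic units are units outside $d_{K}p$, and the $\chi_{K}$-averaging built into $\nu_{n}$ (which also guarantees $\nu_{n}(c,b)\in\mathbb{Z}$ by \cite[Corollary 5.3]{Ke80}) cancels the contributions at the primes dividing $d_{K}$, while the elliptic unit $\theta(\Omega/p^{n},L)^{4p^{n}}$ is a unit outside $p$ by the standard properties of Siegel units \cite{KL81}. Thus $\alpha_{c}/\beta_{c}$ is an $S$-unit with all archimedean absolute values equal to $1$, and to conclude it is a root of unity via Kronecker's theorem I must show its valuations at the primes above $p$ vanish. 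I would establish this by a parallel local computation: the valuations at the primes above $p$ of both the Siegel/elliptic units and the cyclotomic units are again governed by the first Bernoulli polynomial $B_{1}$ through the Kubert–Lang valuation formulas \cite{KL81}, so the same combinatorial identity underlying Theorem~\ref{thm:Kersey} forces the two sets of valuations to coincide.

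I expect this last step — matching the non-archimedean valuations at the primes above $p$ — to be the main obstacle. The regulator map $\rho$ of Theorem~\ref{thm:Kersey} records only archimedean data, and its kernel on $S$-units is strictly larger than $\mu(K(p^{n}))$, since it contains all genuine $p$-units with trivial archimedean regulator; hence the archimedean identity alone cannot force a congruence modulo roots of unity. One genuinely needs the separate local input at $p$ to deduce that $\alpha_{c}/\beta_{c}$ is a global unit, after which Kronecker's theorem closes the argument.
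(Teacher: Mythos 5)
Your strategy coincides with the paper's, but you push it further than the paper does. The paper's entire proof of Theorem \ref{thm:Kersey_mult} is one sentence: ``the assertion follows by comparing the coefficient of $\sigma_{c}$ in Theorem \ref{thm:Kersey}.'' That coefficient comparison is exactly your second paragraph, and your bookkeeping there is correct and more complete than the paper's: the descent of $\beta_{c}$ to $K(\mu_{p^{n}})^{\times}$ (via $\nu_{n}(c,tb)=\nu_{n}(c,b)$ for $t\in\ker\chi_{K}$ with $t\equiv 1\bmod p^{n}$) and the identity $\nu_{n}(dc,b)=\nu_{n}(c,e^{-1}b)$, which lets you read off $|\sigma_{d}(\alpha_{c}/\beta_{c})|=1$ at every archimedean place, are both right. (One simplification: for $p^{n}\mid b$ the congruence $ab\equiv c\bmod p^{n}$ has no solutions, so $\nu_{n}(c,b)=0$ outright; hence $\beta_{c}$ is automatically a $p$-unit, and no $\chi_{K}$-averaging argument at the primes dividing $d_{K}$ is needed.)

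The genuine gap is the one you flag yourself: matching the valuations at the primes above $p$. Your diagnosis of why this is needed is correct --- $\alpha_{c}$ and $\beta_{c}$ are only $p$-units, and the kernel of $\rho$ on $p$-units is strictly larger than the group of roots of unity: for instance $\pi/\bar{\pi}$, where $\pi$ generates $\mathfrak{p}$, has all archimedean absolute values equal to $1$ but is not a root of unity. So a coefficient comparison in Theorem \ref{thm:Kersey} cannot, as pure logic, deliver the congruence modulo $\mu(K(p^{n}))$; the same objection applies verbatim to the paper's one-line proof, which is really a translation of Kersey's Theorem 4.2 into the present notation, with the finite-place work absorbed into the citation \cite{Ke80}. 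Your proposed repair, however, does not close the gap: the claim that the valuations above $p$ are ``again governed by $B_{1}$ through the Kubert--Lang valuation formulas, so the same combinatorial identity forces them to coincide'' is an expectation, not an argument, and it is aimed at the wrong target. On the cyclotomic side $v(1-\zeta_{p^{k}}^{a})$ is constant in $a$ rather than $B_{1}$-weighted, and on the elliptic side the valuations of $\theta(\Omega/p^{n},L)$ at primes above $p$ require a separate computation --- e.g.\ via Kubert--Lang's $q$-expansion/divisor formulas for Siegel--Klein forms (where $B_{2}$, not $B_{1}$, appears) or via an induction using the distribution relation of Lemma \ref{lmm:theta_val} (4) --- and this information is simply not contained in the statement of Theorem \ref{thm:Kersey}, whose right-hand side records only archimedean logarithms. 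As written, your proposal rigorously establishes the archimedean half and correctly isolates the missing non-archimedean input, but it reduces the theorem to an unproved (though true, and established in \cite{Ke80}) valuation identity at $p$ rather than proving it.
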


\begin{proof}
The assertion follows by comparing the coefficient of $\sigma_{c}$ in Theorem \ref{thm:Kersey}.
\end{proof}

\begin{cor}\label{cor:Kersey_mult}
 For a nontrivial ideal $\mathfrak{a}$ of $O_{K}$ such that $(\mathfrak{a}, p)=1$, we have
\begin{align*}
\mathrm{N}_{K(p^{n})/K(\mu_{p^{n}})} \left(
\theta_{\mathfrak{a}}(\omega_{n}) 
\right)^{4}
\equiv 
\prod_{b \in \mathbb{Z}/d_{K}p^{n}\mathbb{Z}}(1-\zeta_{d_{K}p^{n}}^{b})^{\frac{N\mathfrak{a} \cdot\nu_{n}(1,b)-\nu_{n}(N\mathfrak{a}, b)}{p^{n}}}
\bmod \mu(K(p^{n})).
\end{align*} 
\end{cor}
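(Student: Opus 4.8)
**The plan is to deduce Corollary \ref{cor:Kersey_mult} from Theorem \ref{thm:Kersey_mult} by feeding in the definition of $\theta_{\mathfrak{a}}$ as a ratio of theta values.**
Recall from Definition \ref{dfn:theta} (2) that $\theta_{\mathfrak{a}}(z) = \theta(z,L)^{\mathrm{N}\mathfrak{a}}/\theta(z,\mathfrak{a}^{-1}L)$, so specializing at $z = \Omega/p^n$ (a lift of $\omega_n$) gives
\[
\theta_{\mathfrak{a}}(\omega_n) = \frac{\theta(\Omega/p^n, L)^{\mathrm{N}\mathfrak{a}}}{\theta(\Omega/p^n, \mathfrak{a}^{-1}L)}.
\]
First I would rewrite the numerator and denominator in a common form. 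The numerator is simply $\theta(\Omega/p^n,L)^{\mathrm{N}\mathfrak{a}}$. For the denominator, the key is Lemma \ref{lmm:theta_val} (3): since $(\mathfrak{a},p)=1$, the Artin symbol $\sigma_{\mathfrak{a}} \coloneqq (\frac{K(p^\infty)/K}{\mathfrak{a}})$ satisfies $\sigma_{\mathfrak{a}}(\theta(\omega_n,L)) = \theta(\omega_n, \mathfrak{a}^{-1}L)$, so that
\[
\theta(\Omega/p^n, \mathfrak{a}^{-1}L) = \sigma_{\mathfrak{a}}\bigl(\theta(\Omega/p^n, L)\bigr)
\]
up to the identification in Lemma \ref{lmm:theta_val} (2). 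Under the cyclotomic identification $\varepsilon_{\cyc} \bmod p^n$, this Artin symbol restricts on $K(\mu_{p^n})$ to $\sigma_{\mathrm{N}\mathfrak{a}}$, since the Artin symbol of $\mathfrak{a}$ acts on $\mu_{p^n}$ through $\mathrm{N}\mathfrak{a} \bmod p^n$.

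Second, I would apply the norm $\mathrm{N}_{K(p^n)/K(\mu_{p^n})}$ and raise to the fourth power, then invoke Theorem \ref{thm:Kersey_mult} twice. The numerator contributes $\mathrm{N}_{K(p^n)/K(\mu_{p^n})}(\theta(\Omega/p^n,L)^{4p^n})^{\mathrm{N}\mathfrak{a}/p^n}$, which by Theorem \ref{thm:Kersey_mult} at $c=1$ equals $\prod_b (1-\zeta_{d_K p^n}^b)^{\mathrm{N}\mathfrak{a}\cdot\nu_n(1,b)/p^n}$ modulo roots of unity. The denominator, being the $\sigma_{\mathrm{N}\mathfrak{a}}$-translate of the same norm (using that norm and the Galois action commute appropriately, since $\sigma_{\mathfrak{a}}$ normalizes $K(\mu_{p^n})$), contributes $\prod_b(1-\zeta_{d_K p^n}^b)^{\nu_n(\mathrm{N}\mathfrak{a},b)/p^n}$ by Theorem \ref{thm:Kersey_mult} at $c = \mathrm{N}\mathfrak{a}$. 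Taking the ratio of exponents yields precisely $\frac{\mathrm{N}\mathfrak{a}\cdot\nu_n(1,b) - \nu_n(\mathrm{N}\mathfrak{a},b)}{p^n}$, as claimed.

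**The main obstacle will be justifying the denominator's exponent rigorously**, i.e. matching $\mathrm{N}_{K(p^n)/K(\mu_{p^n})}(\theta(\Omega/p^n,\mathfrak{a}^{-1}L)^{4p^n})$ with the $\sigma_{\mathrm{N}\mathfrak{a}}$-component in Theorem \ref{thm:Kersey_mult}. The care here is twofold: I must check that the norm map commutes with $\sigma_{\mathfrak{a}}$ in the sense that $\mathrm{N}_{K(p^n)/K(\mu_{p^n})}(\sigma_{\mathfrak{a}}(\alpha)) = \sigma_{\mathrm{N}\mathfrak{a}}(\mathrm{N}_{K(p^n)/K(\mu_{p^n})}(\alpha))$, which requires that $\sigma_{\mathfrak{a}}$ acts compatibly on the extension $K(p^n)/K(\mu_{p^n})$ and restricts to $\sigma_{\mathrm{N}\mathfrak{a}}$ on $K(\mu_{p^n})$; and I must confirm that the various identities, which hold only modulo $\mu(K(p^n))$ or modulo $\mu_{p^\infty}$ in Lemma \ref{lmm:theta_val}, assemble consistently. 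Since everything is stated modulo $\mu(K(p^n))$, the roots-of-unity ambiguities are absorbed, so the congruence should go through once the Galois-equivariance of the norm is pinned down. The divisibility of each exponent by $p^n$ (needed for the statement to make sense as written) follows from the integrality of $\nu_n(c,b)$ recorded in Theorem \ref{thm:Kersey_mult} together with the distribution-type relation implicit in Lemma \ref{lmm:theta_val} (4).
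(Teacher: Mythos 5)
Your proposal is correct and follows essentially the same route as the paper's proof: write $\theta_{\mathfrak{a}}(\omega_{n})$ as the ratio $\theta(\Omega/p^{n},L)^{\mathrm{N}\mathfrak{a}}/\theta(\Omega/p^{n},\mathfrak{a}^{-1}L)$, identify the denominator with the $\sigma_{\mathrm{N}\mathfrak{a}}$-translate of the numerator via the Artin symbol and the Galois-equivariance of the norm, and then apply Theorem \ref{thm:Kersey_mult} at $c=1$ and $c=\mathrm{N}\mathfrak{a}$, with all root-of-unity ambiguities absorbed into $\mu(K(p^{n}))$. One small correction: the divisibility by $p^{n}$ of the combined exponent $\mathrm{N}\mathfrak{a}\cdot\nu_{n}(1,b)-\nu_{n}(\mathrm{N}\mathfrak{a},b)$ does not follow from the integrality of the individual $\nu_{n}(c,b)$ together with a distribution relation, as you suggest; the paper cites it directly from Kersey's Corollary 5.3, which is where that nontrivial fact lives.
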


\begin{proof}
First, we remark that the exponent
\[
\frac{N\mathfrak{a} \cdot\nu_{n}(1,b)-\nu_{n}(N\mathfrak{a}, b)}{p^{n}}
\] is integral for every $b \in \mathbb{Z}/d_{K}p^{n}\mathbb{Z}$ by \cite[Corollary 5.3]{Ke80}. Let $(\frac{K(p^{n})/K}{\mathfrak{a}})\in \gal(K(p^{n})/K)$ be the Artin symbol corresponding to $\mathfrak{a}$. Using \cite[p. 236]{KL81} and the fact that the Artin symbol $(\frac{K(p^{n})/K}{\mathfrak{a}})$ maps to $\sigma_{N\mathfrak{a}}$ in $\gal(K(\mu_{p^{n}})/K)$, it follows that 
\begin{align*}
&\left( \frac{K(p^{n})/K}{\mathfrak{a}} \right)  \left( \theta \left(
\Omega/p^{n}, L
\right)^{p^{n}} \right)
=
\theta \left(
\Omega/p^{n}, \mathfrak{a}^{-1}L
\right)^{p^{n}}, \quad \text{and} \\
&\mathrm{N}_{K(p^{n})/K(\mu_{p^{n}})}  \left(
\theta \left ( \Omega/p^{n}, \mathfrak{a}^{-1}L
\right)^{p^{n}} \right)
=
\mathrm{N}_{K(p^{n})/K(\mu_{p^{n}})}  \left( 
\theta \left (\Omega/p^{n}, L \right)^{p^{n}} \right)^{\sigma_{N\mathfrak{a}}}.
\end{align*} The assertion follows by applying Theorem \ref{thm:Kersey_mult} to 
\begin{align*}
\mathrm{N}_{K(p^{n})/K(\mu_{p^{n}})} \left(
\theta_{\mathfrak{a}}(\omega_{n})
\right)^{4p^{n}} 
&= \mathrm{N}_{K(p^{n})/K(\mu_{p^{n}})} \left( \frac{
\theta \left (\Omega/p^{n}, L
\right)^{4p^{n}N\mathfrak{a}}}
{
\theta \left ( \Omega/p^{n}, \mathfrak{a}^{-1}L
\right)^{4p^{n}} 
} \right) \\
&=
\mathrm{N}_{K(p^{n})/K(\mu_{p^{n}})}  \left(
\theta \left ( \Omega/p^{n}, L
\right)^{4p^{n}} \right)^{N\mathfrak{a}-\sigma_{N\mathfrak{a}}}.
\end{align*}
\end{proof}

\subsection{\'Etale regulator map and Soul\'e cocycles}\label{2.3}

In this section, we introduce a certain analogue $\rho_{\et}$ of the regulator map $\rho$ introduced in the last section. The construction of $\rho_{\et}$ seems to originate from Soul\'e \cite{So81} using algebraic $K$-theory, particularly his \'etale chern class map introduced in \cite{So79}. 

In the rest of this paper, we denote 
$
\boldsymbol{\zeta} \coloneqq (\zeta_{p^{n}})_{n} \in \varprojlim \mu_{p^{n}}
$ and often identify $\varprojlim \mu_{p^{n}}$ with the Tate twist $\mathbb{Z}_{p}(1)$ through this basis. We define the (completed) group rings by 
\[
\Lambda_{n} \coloneqq \mathbb{Z}_{p} \lbrack \gal(K(\mu_{p^{n}})/K) \rbrack
\quad \text{for $n \geq 1$, } \quad
\Lambda \coloneqq \mathbb{Z}_{p}\lbrack \lbrack \gal(K(\mu_{p^{\infty}})/K) \rbrack \rbrack = \varprojlim \Lambda_{n},
\] and denote the ring of $p$-integers in $K(\mu_{p^{n}})$ by $O_{n}$. According to this notation, we have the composite of the following two isomorphisms:
\[
\rho_{\et,n} \colon
O_{n}^{\times}\otimes \mathbb{Z}_{p}
\xrightarrow{\sim}
H^{1}_{\et}(O_{n}, \mathbb{Z}_{p}(1))
\xrightarrow[\mathrm{sh}^{-1}]{\sim}
H^{1}_{\et}(O_{K}[1/p], \Lambda_{n}(1))
\] where the first map is the Kummer isomorphism and the second one is the inverse of the isomorphism given by Shapiro's Lemma in \cite[(1.6.4) Proposition]{NSW08}. Here, we identify the induced module $\mathrm{Ind}^{G_{K(\mu_{p^{n}})}}_{G_{K}}(\mathbb{Z}_{p}(1))=\mathrm{Map}(\gal(K(\mu_{p^{n}})/K), \mathbb{Z}_{p}(1))$ \cite[p. 62]{NSW08} with $\Lambda_{n}(1)$ through the equivariant isomorphism
\[
\mathrm{Ind}^{G_{K(\mu_{p^{n}})}}_{G_{K}}(\mathbb{Z}_{p}(1)) \xrightarrow{\sim} \Lambda_{n}(1)=\mathbb{Z}_{p}[\gal(K(\mu_{p^{n}})/K)]  \otimes \mathbb{Z}_{p}(1) : \; x \mapsto \sum_{\tau \in \gal(K(\mu_{p^{n}})/K)}\tau \otimes  \tau x(\tau^{-1}).
\]
We remark that the map 
\[
H^{1}_{\et}(O_{n}, \mathbb{Z}_{p}(1))
\xrightarrow[\mathrm{sh}^{-1}]{\sim}
H^{1}_{\et}(O_{K}[1/p], \Lambda_{n}(1))
\] is an isomorphism between $\gal(K(\mu_{p^{n}})/K)$-modules:  The group $\gal(K(\mu_{p^{n}})/K)$ acts on the cohomology group on the left-hand side by conjugation. On the other hand, for $\sigma \in \gal(K(\mu_{p^{n}})/K)$, the homomorphism
\[
\sigma^{-1}: \Lambda_{n}(1) \xrightarrow{\sim} \Lambda_{n}(1) : \tau \otimes \zeta_{p^{n}} \mapsto \sigma^{-1}\tau \otimes \zeta_{p^{n}}
\] is an isomorphism between $\mathbb{Z}_{p}[\gal(K(\mu_{p^{n}})/K)]$-modules, and this defines an action of the group $\gal(K(\mu_{p^{\infty}})/K)$ on the cohomology group on the right-hand side. The desired equivariance follows from the discussion after \cite[(1.6.5) Proposition]{NSW08}.

\begin{lmm}\label{lmm:reg}
For each $n \geq 1$, the following assertions hold. 
\begin{enumerate}
\item For $\alpha \in O_{n}^{\times}$ and $\sigma \in \gal(K(\mu_{p^{n}})/K)$, we have
\[
\rho_{\et,n}(\alpha^{\sigma})=\sigma^{-1}\rho_{\et, n}(\alpha).
\] 
\item The following diagram commutes:
\begin{eqnarray*}
\begin{tikzcd}
O_{n+1}^{\times} \otimes \mathbb{Z}_{p} \arrow[r, "\rho_{\et,n+1}"]\arrow[d, "\mathrm{norm}"] & H^{1}_{\et}(O_{K}[1/p], \Lambda_{n+1}(1)) \arrow[d] \\
O_{n}^{\times} \otimes \mathbb{Z}_{p} \arrow[r, "\rho_{\et, n}"] & H^{1}_{\et}(O_{K}[1/p], \Lambda_{n}(1))
\end{tikzcd}
\end{eqnarray*}
where the right vertical arrow is induced by the natural surjection $\Lambda_{n+1} \to \Lambda_{n}$.
\end{enumerate}
\end{lmm}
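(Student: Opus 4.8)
The plan is to prove both assertions by tracing the two isomorphisms that define $\rho_{\et,n}$, namely the Kummer isomorphism $\mathrm{Kum}_n \colon O_n^\times \otimes \mathbb{Z}_p \xrightarrow{\sim} H^1_{\et}(O_n, \mathbb{Z}_p(1))$ and the inverse Shapiro isomorphism $\mathrm{sh}^{-1}$, through the relevant functorialities. For (1), I would first note that $\mathrm{Kum}_n$ is equivariant for the natural action $\alpha \mapsto \alpha^{\sigma}$ of $\gal(K(\mu_{p^n})/K)$ on units and the conjugation action of this quotient on the cohomology of the normal subgroup $G_{K(\mu_{p^n})}$. This can be checked directly on a Kummer cocycle $h \mapsto h(\alpha^{1/p^n})/\alpha^{1/p^n}$: choosing $(\alpha^{\sigma})^{1/p^n} = \sigma(\alpha^{1/p^n})$ identifies the cocycle attached to $\alpha^{\sigma}$ with the $\sigma$-conjugate of the one attached to $\alpha$. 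By the discussion following (1.6.5) Proposition of \cite{NSW08} recalled above, $\mathrm{sh}^{-1}$ carries this conjugation action to the action induced by the module endomorphism $\sigma^{-1} \colon \Lambda_n(1) \to \Lambda_n(1)$. Composing the two equivariances yields $\rho_{\et,n}(\alpha^{\sigma}) = \sigma^{-1}\rho_{\et,n}(\alpha)$.

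For (2), I would factor $\rho_{\et,n} = \mathrm{sh}^{-1} \circ \mathrm{Kum}_n$ and split the rectangle into two squares, writing $H_n \coloneqq G_{K(\mu_{p^n})}$ and $H_{n+1} \coloneqq G_{K(\mu_{p^{n+1}})}$, so that $H_{n+1} \subseteq H_n$. The left square asserts that $\mathrm{Kum}_n$ intertwines the norm $\mathrm{N}_{K(\mu_{p^{n+1}})/K(\mu_{p^n})}$ with the corestriction $\mathrm{cor} \colon H^1_{\et}(O_{n+1}, \mathbb{Z}_p(1)) \to H^1_{\et}(O_n, \mathbb{Z}_p(1))$; this is the standard compatibility of Kummer theory with norms and corestriction, obtained by reducing modulo powers of $p$ and passing to the inverse limit. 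Thus the substance of (2) is to show that $\mathrm{sh}^{-1}$ carries $\mathrm{cor}$ to the map induced by the natural surjection $\Lambda_{n+1}(1) \to \Lambda_n(1)$.

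This last identification is where I expect the real work to lie. I would use the transitivity of induction $\mathrm{Ind}^{G_K}_{H_{n+1}} = \mathrm{Ind}^{G_K}_{H_n} \circ \mathrm{Ind}^{H_n}_{H_{n+1}}$ together with the standard description of $\mathrm{cor}$ as Shapiro's isomorphism followed by the trace map $\mathrm{Ind}^{H_n}_{H_{n+1}} \mathbb{Z}_p(1) \to \mathbb{Z}_p(1)$; applying $\mathrm{Ind}^{G_K}_{H_n}$ and Shapiro over $G_K$ then shows that $\mathrm{cor}$ corresponds to the map induced by $\mathrm{Ind}^{G_K}_{H_n}(\mathrm{trace})$. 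It then remains to verify, purely at the level of modules, that under the explicit identifications $\mathrm{Ind}^{G_K}_{H_m}(\mathbb{Z}_p(1)) \xrightarrow{\sim} \Lambda_m(1)$, $x \mapsto \sum_{\tau} \tau \otimes \tau x(\tau^{-1})$ for $m \in \{n, n+1\}$, this trace map becomes exactly the projection $\Lambda_{n+1}(1) \to \Lambda_n(1)$: concretely, the trace sums a function over the fibers of $\gal(K(\mu_{p^{n+1}})/K) \to \gal(K(\mu_{p^n})/K)$, which is precisely the effect of the group-ring surjection sending each group element to its image. The delicate point, and the main obstacle, is the bookkeeping in this final step---keeping track of the twisting convention $\tau \otimes \tau x(\tau^{-1})$ and of on which side the inverse appears---so that the trace is matched with the stated surjection rather than with some other normalization. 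Once this is settled, the two commuting squares assemble into the desired diagram.
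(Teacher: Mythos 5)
Your proposal is correct and follows essentially the same route as the paper: the paper proves (1) by combining the equivariance of the Kummer isomorphism with the equivariance of the Shapiro isomorphism discussed just before the lemma, and disposes of (2) by citing \cite[(1.6.5) Proposition]{NSW08}. What you work out by hand --- the explicit cocycle computation for (1), and for (2) the identification of corestriction with the trace map on induced modules and hence, under the identification $x \mapsto \sum_{\tau}\tau \otimes \tau x(\tau^{-1})$, with the group-ring surjection $\Lambda_{n+1}(1) \to \Lambda_{n}(1)$ --- is precisely the content of that citation, so your extra bookkeeping is a correct unwinding of the paper's references rather than a genuinely different argument.
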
   

\begin{proof}
The assertion (1) follows from the discussion before this lemma and the equivariance of the Kummer isomorphism. The assertion (2) follows from \cite[(1.6.5) Proposition]{NSW08}.
\end{proof}

\begin{dfn}[The regulator map $\rho_{\et}$]
We define the regulator map $\rho_{\et}$ by
\[
\rho_{\et} \coloneqq (\rho_{\et,n})_{n} \colon 
\varprojlim_{n} O_{n}^{\times} \otimes \mathbb{Z}_{p}  \xrightarrow{\sim}
H^{1}_{\et}(O_{K}[1/p], \Lambda(1))=\varprojlim_{n}H^{1}_{\et}(O_{K}[1/p], \Lambda_{n}(1)).
\] 
\end{dfn}

Note that $H^{1}_{\et}(O_{K}[1/p], \Lambda(1))$ can be identified with the first continuous cochain cohomology group (with coefficients in $ \Lambda(1)$) of the Galois group of the maximal Galois extension of $K$ unramified outside $p$. 

\begin{rmk}\label{rmk:equiv}
By Lemma \ref{lmm:reg} (1), the map $\rho_{\et}$ satisfies the anti-equivariance relation \[
\rho_{\et} \left( \boldsymbol{\alpha}^{\sigma} \right)
=
\sigma^{-1} \rho_{\et} \left( \boldsymbol{\alpha} \right)
\] for every norm-compatible system of units $\boldsymbol{\alpha}$ and $\sigma \in \gal(K(\mu_{p^{\infty}})/K)$. This property is similar to that of the regulator map $\rho$ introduced in the last section.
\end{rmk}

The map $\rho_{\et}$ allows us to regard norm-compatible systems of units as $\Lambda(1)$-valued cocycle classes. For a continuous character $\psi \colon \mathrm{Gal}(K(\mu_{p^{\infty}})/K) \to \mathbb{Z}_{p}^{\times}$, we denote the two associated evaluation maps $\Lambda \to  \mathbb{Z}_{p}(\psi)$ and $H^{1}_{\et}(O_{K}[1/p], \Lambda(1)) \to H^{1}_{\et}(O_{K}[1/p], \mathbb{Z}_{p}(\psi)(1))$ by the same letter. We define the homomorphism $\rho_{\et,\psi}$ to be the composite of $\rho_{\et}$ and the evaluation map at $\psi \varepsilon_{\cyc}^{-1}$:  
\[
\rho_{\et, \psi} \colon 
\varprojlim_{n} O_{n}^{\times} \otimes \mathbb{Z}_{p}
\xrightarrow{\rho_{\et}} 
H^{1}_{\et}(O_{K}[1/p], \Lambda(1)) 
\xrightarrow{\psi \varepsilon_{\cyc}^{-1}}
H^{1}_{\et}(O_{K}[1/p], \mathbb{Z}_{p}(\psi)). 
\]

Let $\boldsymbol{\alpha}=(\alpha_{n})$ be a norm-compatible system of units, and consider its image under the composite of the map $\rho_{\et, \psi}$ and the restriction map to
\[
H^{1}_{\et}(O_{K(\mu_{p^{\infty}})}[1/p], \mathbb{Z}_{p}(\psi))^{\gal(K(\mu_{p^{\infty}})/K)}=\mathrm{Hom}_{\gal(K(\mu_{p^{\infty}})/K)}(\pi_{1}^{\et}(O_{K(\mu_{p^{\infty}})}[1/p]), \mathbb{Z}_{p}(\psi)).
\] Here, the \'etale fundamental group $\pi_{1}^{\et}(O_{K(\mu_{p^{\infty}})}[1/p])$ is identified with the Galois group of the maximal Galois extension of $K(\mu_{p^{\infty}})$ unramified outside $p$. By construction, the Kummer character $\rho_{\et, \psi}(\boldsymbol{\alpha})$ is characterized by
\[
\zeta_{p^{n}}^{\rho_{\et, \psi}(\boldsymbol{\alpha})(\sigma)}=\frac{\sigma \left( \prod_{a \in (\mathbb{Z}/p^{n}\mathbb{Z})^{\times}}(\sigma_{a}\alpha_{n})^{\frac{\psi \varepsilon_{\cyc}^{-1}(\sigma_{a})}{p^{n}}} \right)}{\prod_{a \in (\mathbb{Z}/p^{n}\mathbb{Z})^{\times}}(\sigma_{a}\alpha_{n})^{\frac{\psi \varepsilon_{\cyc}^{-1}(\sigma_{a})}{p^{n}}}} \quad \text{for $\sigma \in G_{K(\mu_{p^{\infty}})}$ and $n \geq 1$.}
\]

\begin{ex}[Soul\'e cocycle classes]\label{ex:Soule}
    From the norm-compatible system
    \[
    \boldsymbol{c}^{(1)} \coloneqq (1-\zeta_{p^{n}})_{n} \in 
\varprojlim_{n} O_{n}^{\times} \otimes \mathbb{Z}_{p}
    \] we obtain the cocycle class $\rho_{\et}(\boldsymbol{c}^{(1)})$. For an integer $m \geq 2$, we call $
    \chi_{m} \coloneqq \rho_{\et, \varepsilon_{\cyc}^{m}}(\boldsymbol{c}^{(1)})
    $ the $m$-th Soul\'e cocycle class. By its construction, this extends the $m$-th Soul\'e character on $G_{K(\mu_{p^{\infty}})}$ introduced in Definition \ref{dfn:Soule}. 
    
    Note that this cocycle is torsion whenever $m$ is even: More generally, we claim that $\rho_{\et, \psi}(\boldsymbol{c}^{(1)})$ is torsion whenever $\psi$ is nontrivial and even (i.e. $\psi(\sigma_{-1})=1$).  In fact, since $\psi$ is even, we have
    \[
    \rho_{\et, \psi} (\sigma_{-1}\boldsymbol{c}^{(1)})
    =
    -\rho_{\et, \psi} ( \boldsymbol{c}^{(1)} )
    \] and $
    \sigma_{-1}\boldsymbol{c}^{(1)}=(-\boldsymbol{\zeta}) \cdot \boldsymbol{c}^{(1)}$. Hence the claim is reduced to showing that $\rho_{\et, \psi}(\boldsymbol{\zeta})$ is torsion, which is proved in the following lemma:
\end{ex}

\begin{lmm}\label{lmm:unity}
    The following two assertions hold:
    \begin{enumerate}
        \item The homomorphism
        \[
        \varprojlim_{n} \mu(K(\mu_{p^{n}})) \otimes \mathbb{Z}_{p}
    \to
     \varprojlim_{n} O_{n}^{\times} \otimes \mathbb{Z}_{p}^{\times}
    \xrightarrow{\rho_{\et}}
    H^{1}_{\et}(O_{K}[1/p], \Lambda(1)) 
        \] factors through the composite of an isomorphism
        \[
         \varprojlim_{n} \mu(K(\mu_{p^{n}})) \otimes \mathbb{Z}_{p} \cong H^{1}(\gal(K(\mu_{p^{\infty}})/K), \Lambda(1))
        \] and the inflation map.
        \item Let $\psi \colon \mathrm{Gal}(K(\mu_{p^{\infty}})/K) \to \mathbb{Z}_{p}^{\times}$ be a nontrivial character. Then the image of the homomorphism \[
    \varprojlim_{n} \mu(K(\mu_{p^{n}})) \otimes \mathbb{Z}_{p}
    \to
     \varprojlim_{n} O_{n}^{\times} \otimes \mathbb{Z}_{p}^{\times}
    \xrightarrow{\rho_{\et,\psi}}
    H^{1}_{\et}(O_{K}[1/p], \mathbb{Z}_{p}(\psi)) 
    \] is a torsion subgroup.
    \end{enumerate}
\end{lmm}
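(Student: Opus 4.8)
The plan is to base both assertions on the inflation--restriction sequence attached to the quotient $\pi_{1}^{\et}(O_{K}[1/p]) \twoheadrightarrow G \coloneqq \gal(K(\mu_{p^{\infty}})/K)$ (this is a quotient because $K(\mu_{p^{\infty}})/K$ is unramified outside $p$), after first pinning down the source module. Since $p \geq 5$ does not divide $w_{K} \in \{2,4,6\}$ and $K(\mu_{p^{n}})$ contains no primitive $p^{n+1}$-th root of unity (for degree reasons, as $[\mathbb{Q}(\mu_{p^{n+1}}):\mathbb{Q}(\mu_{p^{n}})]=p$ is prime to $[K:\mathbb{Q}]=2$), the $p$-primary part of $\mu(K(\mu_{p^{n}}))$ is exactly $\mu_{p^{n}}$; tensoring with $\mathbb{Z}_{p}$ annihilates the prime-to-$p$ part, so $\mu(K(\mu_{p^{n}})) \otimes \mathbb{Z}_{p} \cong \mu_{p^{n}}$. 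A direct computation gives $\mathrm{N}_{K(\mu_{p^{n+1}})/K(\mu_{p^{n}})}(\zeta_{p^{n+1}}) = \zeta_{p^{n}}$, so the transition maps are the standard surjections and $\varprojlim_{n} \mu(K(\mu_{p^{n}})) \otimes \mathbb{Z}_{p} \cong \mathbb{Z}_{p}(1)$, topologically generated by $\boldsymbol{\zeta}$.

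\emph{Part (1).} Because $\pi_{1}^{\et}(O_{K(\mu_{p^{\infty}})}[1/p])$ acts trivially on $\Lambda(1)$ (it fixes $\mu_{p^{\infty}}$ and acts trivially on $\Lambda$), inflation--restriction yields an exact sequence $0 \to H^{1}(G, \Lambda(1)) \xrightarrow{\mathrm{inf}} H^{1}_{\et}(O_{K}[1/p], \Lambda(1)) \xrightarrow{\mathrm{res}} H^{1}(\pi_{1}^{\et}(O_{K(\mu_{p^{\infty}})}[1/p]), \Lambda(1))$. The key point is that the Kummer class of any $p$-power root of unity restricts to zero over $K(\mu_{p^{\infty}})$: all of its $p$-power roots again lie in $\mu_{p^{\infty}} \subset K(\mu_{p^{\infty}})$, so the defining cocycle is trivial there. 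Hence the image of $\varprojlim_{n} \mu(K(\mu_{p^{n}})) \otimes \mathbb{Z}_{p}$ lies in $\ker(\mathrm{res}) = \mathrm{im}(\mathrm{inf})$. To identify the target, write $G_{n} \coloneqq \gal(K(\mu_{p^{\infty}})/K(\mu_{p^{n}})) \cong \mathbb{Z}_{p}$; the twisting isomorphism $\Lambda_{n}(1) \cong \mathrm{Ind}_{G_{n}}^{G} \mathbb{Z}_{p}(1)$ and Shapiro's lemma give $H^{1}(G, \Lambda_{n}(1)) \cong H^{1}(G_{n}, \mathbb{Z}_{p}(1)) \cong \mathbb{Z}_{p}(1)/p^{n}$, whence $H^{1}(G, \Lambda(1)) = \varprojlim_{n} H^{1}(G, \Lambda_{n}(1)) \cong \mathbb{Z}_{p}(1)$. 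Finally, at each finite level $n$ the induced map $\mu(K(\mu_{p^{n}})) \otimes \mathbb{Z}_{p} \cong \mu_{p^{n}} \to H^{1}(G, \Lambda_{n}(1)) \cong \mathbb{Z}_{p}(1)/p^{n}$ is a homomorphism between cyclic groups of the same order $p^{n}$ that is injective (as both $\rho_{\et,n}$ and $\mathrm{inf}$ are), hence an isomorphism; passing to the limit yields the desired isomorphism compatible with $\mathrm{inf}$.

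\emph{Part (2).} Evaluation at $\psi\varepsilon_{\cyc}^{-1}$ is a homomorphism of $G$-modules $\Lambda(1) \to \mathbb{Z}_{p}(\psi)$, so by naturality of inflation it carries $\mathrm{im}(\mathrm{inf}\colon H^{1}(G, \Lambda(1)) \to H^{1}_{\et}(O_{K}[1/p], \Lambda(1)))$ into $\mathrm{im}(\mathrm{inf}\colon H^{1}(G, \mathbb{Z}_{p}(\psi)) \to H^{1}_{\et}(O_{K}[1/p], \mathbb{Z}_{p}(\psi)))$. By (1) the roots of unity land in the former, so it suffices to show $H^{1}(G, \mathbb{Z}_{p}(\psi))$ is torsion for nontrivial $\psi$. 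Decomposing $G \cong \Delta \times \Gamma$ with $|\Delta| = p-1$ prime to $p$ and $\Gamma \cong \mathbb{Z}_{p}$, the vanishing of higher $\Delta$-cohomology gives $H^{1}(G, \mathbb{Z}_{p}(\psi)) \cong (\mathbb{Z}_{p}(\psi)/(\psi(\gamma)-1)\mathbb{Z}_{p}(\psi))^{\Delta}$ for a topological generator $\gamma$ of $\Gamma$; this is finite when $\psi|_{\Gamma} \neq 1$, and equals $\mathbb{Z}_{p}(\psi)^{\Delta} = 0$ when $\psi|_{\Gamma} = 1$ (which forces $\psi|_{\Delta} \neq 1$). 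In either case it is torsion, so the image is torsion. Alternatively, the explicit cocycle formula before Example \ref{ex:Soule} shows directly that $\rho_{\et, \psi}(\boldsymbol{\zeta})$ restricts to $0$ on $G_{K(\mu_{p^{\infty}})}$, since $\prod_{a}(\sigma_{a}\zeta_{p^{n}})^{\psi\varepsilon_{\cyc}^{-1}(\sigma_{a})/p^{n}}$ is a $p$-power root of unity and hence $G_{K(\mu_{p^{\infty}})}$-fixed, and the same inflation--restriction argument then applies.

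\emph{Main obstacle.} The crux is the isomorphism claim in (1): proving that the natural map is \emph{surjective} onto $H^{1}(G, \Lambda(1))$, not merely injective. This hinges on the compatibility of Shapiro's isomorphism $\mathrm{sh}^{-1}$ with inflation and restriction—so that the vanishing over $K(\mu_{p^{\infty}})$ and the ``inflated from $G_{n}$'' property transport correctly through $\mathrm{sh}^{-1}$—together with the finite-level comparison of two cyclic groups of order $p^{n}$. Once these bookkeeping points are in place, the remaining cohomological computations over $\mathbb{Z}_{p}$ and over $\mathbb{Z}_{p}^{\times} \cong \Delta \times \Gamma$ are routine.
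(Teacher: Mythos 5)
Your proposal is correct and takes essentially the same route as the paper's proof: both rest on the observations that the Kummer classes of $p$-power roots of unity die upon restriction to $K(\mu_{p^{\infty}})$ and hence lie in the image of inflation, that (via Shapiro's lemma and the finite-level fact that both $\mu_{p^{n}}$ and the relevant $H^{1}$ are cyclic of order $p^{n}$) this image is $H^{1}(\gal(K(\mu_{p^{\infty}})/K), \Lambda(1)) \cong \mathbb{Z}_{p}$ generated by $\rho_{\et}(\boldsymbol{\zeta})$, and that for nontrivial $\psi$ the group $H^{1}(\gal(K(\mu_{p^{\infty}})/K), \mathbb{Z}_{p}(\psi))$ is finite. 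The only cosmetic differences are that you run inflation--restriction after applying Shapiro (over $O_{K}[1/p]$ with $\Lambda_{n}(1)$-coefficients) whereas the paper works over $O_{n}[1/p]$ at finite level and then passes through Shapiro and limits, and in (2) you compute via the decomposition $\Delta \times \Gamma$ while the paper invokes the coinvariants $\mathbb{Z}_{p}(\psi)_{\gal(K(\mu_{p^{\infty}})/K)}$; these are the same computation.
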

\begin{proof}
    First, observe that the image of $\zeta_{n}$ under the Kummer isomorphism generates the image of the inflation map
    \[
    \mathbb{Z}/p^{n}\mathbb{Z}  \cong H^{1}(\gal(K(\mu_{p^{\infty}})/K(\mu_{p^{n}})), \mathbb{Z}_{p}(1)) \hookrightarrow  H^{1}_{\et}(O_{n}[1/p], \mathbb{Z}_{p}(1))
    \] for every $n \geq 1$. By using Shapiro's lemma and taking the limit of both sides, it follows that $\rho_{\et}(\boldsymbol{\zeta})$ generates the image of the inflation map
    \[
     \mathbb{Z}_{p} \cong H^{1}(\gal(K(\mu_{p^{\infty}})/K), \Lambda(1)) \hookrightarrow  H^{1}_{\et}(O_{K}[1/p], \Lambda(1)),
    \] proving the first assertion. Now $\rho_{\et, \psi}(\boldsymbol{\zeta})$ is contained in the image of 
    \[
     H^{1}(\gal(K(\mu_{p^{\infty}})/K), \mathbb{Z}_{p}(\psi)) \cong \mathbb{Z}_{p}(\psi)_{\gal(K(\mu_{p^{\infty}})/K)},
    \] where $\mathbb{Z}_{p}(\psi)_{\gal(K(\mu_{p^{\infty}})/K)}$ denotes the $\gal(K(\mu_{p^{\infty}})/K)$-coinvariant of $\mathbb{Z}_{p}(\psi)$. This module is finite since $\psi$ is nontrivial, concluding the proof of (2).
\end{proof}

\begin{rmk}
    When $\psi$ is trivial, the proof of Lemma \ref{lmm:unity} shows that the image of $\varprojlim_{n} \mu(K(\mu_{p^{n}})) \otimes \mathbb{Z}_{p}$ in $H^{1}(O_{K}[1/p], \mathbb{Z}_{p}) \cong \mathbb{Z}_{p}^{2}$ coincides with a $\mathbb{Z}_{p}$-submodule generated by the $p$-adic cyclotomic character $\varepsilon_{\cyc}$. 
\end{rmk}

We will also need the following variant of the Soul\'e cocycles:

\begin{dfn}\label{dfn:genSoule}
    For $m \geq 2$, we define the cocycle class
    \[
    \chi_{m}^{(d_{K})} \coloneqq
    \rho_{\et, \varepsilon_{\cyc}^{m}}(\boldsymbol{c}^{(d_{K})})
    \in
    H^{1}_{\et}(O_{K}[1/p], \mathbb{Z}_{p}(m)) 
    \] where we define the norm-compatible system by
    \[
    \boldsymbol{c}^{(d_{K})} \coloneqq 
    \left(
    \mathrm{N}_{\mathbb{Q}(\mu_{d_{K}p^{n}})/K(\mu_{p^{n}})}(1-\zeta_{d_{K}p^{n}})
    \right)_{n}.
    \]
\end{dfn}

\begin{rmk}\label{rmk:genSoule}
\begin{enumerate} 
\item The class $\chi_{m}^{(d_{K})}$ corresponds to the Soul\'e-Deligne \emph{cyclotomic element} $c_{m}(\zeta_{d_{K}})$ in Huber-Kings \cite[Definition 3.1.2]{HK03} via the corestriction map. Similarly, the $m$-th Soul\'e cocycle corresponds to $c_{m}(1)$ in their paper. We refer to \cite[Section 3]{HK03} for a detailed discussion on cyclotomic elements.

\item When restricted to $G_{K(\mu_{p^{\infty}})}$, the character $\chi_{m}^{(d_{K})}$ is uniquely characterized by 
\[
\zeta_{p^{n}}^{\chi_{m}^{(d_{K})}(\sigma)}=\frac{\sigma \left( \prod_{a \in (\mathbb{Z}/p^{n}\mathbb{Z})^{\times}}\sigma_{a}\mathrm{N}_{\mathbb{Q}(\mu_{d_{K}p^{n}})/K(\mu_{p^{n}})}(1-\zeta_{d_{K}p^{n}})^{\frac{a^{m-1}}{p^{n}}} \right)}{\prod_{a \in (\mathbb{Z}/p^{n}\mathbb{Z})^{\times}}\sigma_{a}\mathrm{N}_{\mathbb{Q}(\mu_{d_{K}p^{n}})/K(\mu_{p^{n}})}(1-\zeta_{d_{K}p^{n}})^{\frac{a^{m-1}}{p^{n}}}}
\] for every $n \geq 1$.
\item Let us recall \cite[Corollary of Proposition 4]{IS87} that $\chi_{m}$ is surjective as a character on $G_{K(\mu_{p^{\infty}})}$ for every $m=3,5,\dots,p-2$ if and only if Vandiver's conjecture holds for $p$. Similarly, the same argument as the proof of \cite[Corollary]{IS87} shows that the following two assertions are equivalent:
    \begin{enumerate}
    \item $\chi_{m}$ is surjective for $m=3,5, \dots, p-2$, and $\chi_{m}^{(d_{K})}$ is surjective for $m=2,4, \dots, p-3$.
    \item The class number of $K(\mu_{p})^{+}$ is not divisible by $p$, where $K(\mu_{p})^{+}$ denotes the maximal real subfield of $K(\mu_{p})$.
    \end{enumerate}
\end{enumerate}
\end{rmk}

\begin{lmm}\label{lmm:key}
Let $\iota \in \gal(\mathbb{Q}(\mu_{d_{K}p^{\infty}})/\mathbb{Q}(\mu_{p^{\infty}}))$ be a unique element satisfying $\iota(\zeta_{d_{K}})=\zeta_{d_{K}}^{-1}$ and $\varphi$ Euler's totient function. Then the following two equalities hold:
\begin{align*}
& \rho_{\et}(\boldsymbol{c}^{(d_{K})})+\rho_{\et}(\iota \boldsymbol{c}^{(d_{K})})=\prod_{\substack{\ell: \mathrm{prime} \\ \ell \mid d_{K}}}(1-\sigma_{\ell}) \cdot \rho_{\et}(\boldsymbol{c}^{(1)}), \\
& \rho_{\et}(\boldsymbol{c}^{(d_{K})})-\sigma_{-1} \cdot \rho_{\et}(\iota \boldsymbol{c}^{(d_{K})})=\frac{\varphi(d_{K})}{2d_{K}}\rho_{\et}(\boldsymbol{\zeta}).
\end{align*} 
\end{lmm}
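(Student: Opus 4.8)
The plan is to exploit that $\rho_{\et}$ is an additive homomorphism which is anti-equivariant for the $\gal(K(\mu_{p^{\infty}})/K)$-action (Remark \ref{rmk:equiv}), so that both displayed identities can be deduced from corresponding identities among norm-compatible systems of units inside $\varprojlim_{n}O_{n}^{\times}\otimes\mathbb{Z}_{p}$. Before starting, I would check that $\iota\boldsymbol{c}^{(d_{K})}$ really is such a system: the element $\iota$ fixes $\mu_{p^{\infty}}$ and preserves each $K(\mu_{p^{n}})$ (which is Galois over $\mathbb{Q}$), and since all the Galois groups in sight are abelian, $\iota$ commutes with the norm maps $\mathrm{N}_{\mathbb{Q}(\mu_{d_{K}p^{n}})/K(\mu_{p^{n}})}$ and with the transition norms $\mathrm{N}_{K(\mu_{p^{n+1}})/K(\mu_{p^{n}})}$. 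The recurring mechanism is that, by anti-equivariance, a factor $\sigma$ acting on a cohomology class equals $\rho_{\et}$ applied to the unit system raised to $\sigma^{-1}$; this converts every $\sigma_{\ell}^{-1}$ (resp. $\sigma_{-1}$) appearing on the unit side into $\sigma_{\ell}$ (resp. $\sigma_{-1}$) on the cohomology side.

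For the first identity I would first observe, using that $K$ is imaginary quadratic so $\chi_{K}(-1)=-1$, that $\iota$ represents the nontrivial coset of $\gal(\mathbb{Q}(\mu_{d_{K}p^{n}})/K(\mu_{p^{n}}))$ inside $\gal(\mathbb{Q}(\mu_{d_{K}p^{n}})/\mathbb{Q}(\mu_{p^{n}}))$ (an index-two subgroup). Splitting the larger norm over these two cosets then gives, at each level $n$,
\[
\boldsymbol{c}^{(d_{K})}\cdot\iota\boldsymbol{c}^{(d_{K})}=\Big(\mathrm{N}_{\mathbb{Q}(\mu_{d_{K}p^{n}})/\mathbb{Q}(\mu_{p^{n}})}(1-\zeta_{d_{K}p^{n}})\Big)_{n}.
\]
The right-hand system is computed by the standard norm (distribution) relation for cyclotomic units: descending from conductor $d_{K}p^{n}$ to $p^{n}$, reducing a prime power $\ell^{a}\to\ell^{a-1}$ with $a\geq2$ contributes no Euler factor, while deleting the last factor of each prime $\ell\mid d_{K}$ contributes $(1-\sigma_{\ell}^{-1})$ (here $p\nmid d_{K}$, since $p$ splits in $K$, so every $\ell\mid d_{K}$ is prime to $p$ and $\sigma_{\ell}$ is the Frobenius at $\ell$). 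This yields $\boldsymbol{c}^{(d_{K})}\cdot\iota\boldsymbol{c}^{(d_{K})}=(\boldsymbol{c}^{(1)})^{\prod_{\ell\mid d_{K}}(1-\sigma_{\ell}^{-1})}$, and applying $\rho_{\et}$ together with the anti-equivariance mechanism above produces exactly $\prod_{\ell\mid d_{K}}(1-\sigma_{\ell})\cdot\rho_{\et}(\boldsymbol{c}^{(1)})$.

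For the second identity I would use anti-equivariance to rewrite $\sigma_{-1}\cdot\rho_{\et}(\iota\boldsymbol{c}^{(d_{K})})=\rho_{\et}\big((\iota\boldsymbol{c}^{(d_{K})})^{\sigma_{-1}}\big)$, noting $\sigma_{-1}^{2}=1$. Since $\sigma_{-1}\iota$ is the global complex conjugation $J$ (it inverts every root of unity) and $J$ commutes with $\mathrm{N}_{\mathbb{Q}(\mu_{d_{K}p^{n}})/K(\mu_{p^{n}})}$, the $n$-th component of $(\iota\boldsymbol{c}^{(d_{K})})^{\sigma_{-1}}$ is $\mathrm{N}_{\mathbb{Q}(\mu_{d_{K}p^{n}})/K(\mu_{p^{n}})}(1-\zeta_{d_{K}p^{n}}^{-1})$. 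Hence $\rho_{\et}(\boldsymbol{c}^{(d_{K})})-\sigma_{-1}\rho_{\et}(\iota\boldsymbol{c}^{(d_{K})})$ equals $\rho_{\et}$ of the ratio system, whose $n$-th term is $\mathrm{N}_{\mathbb{Q}(\mu_{d_{K}p^{n}})/K(\mu_{p^{n}})}(-\zeta_{d_{K}p^{n}})$, because $\frac{1-\zeta}{1-\zeta^{-1}}=-\zeta$. This is a norm-compatible system of roots of unity, so it lies in the image of $\varprojlim_{n}\mu(K(\mu_{p^{n}}))\otimes\mathbb{Z}_{p}$. Computing this norm of $-\zeta_{d_{K}p^{n}}$ and discarding the prime-to-$p$ torsion (which dies after $\otimes\mathbb{Z}_{p}$, as $p\geq5$), the surviving $p$-part is $\zeta_{p^{n}}^{\,d_{K}^{-1}\varphi(d_{K})/2}$, where the factor $d_{K}^{-1}\bmod p^{n}$ arises from expressing the norm in terms of the standard generator $\zeta_{p^{n}}=\zeta_{d_{K}p^{n}}^{d_{K}}$; in the limit this identifies the system with $\tfrac{\varphi(d_{K})}{2d_{K}}\boldsymbol{\zeta}$ (a legitimate $p$-adic scalar since $p\nmid 2d_{K}$), and applying $\rho_{\et}$ gives the claim.

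The main obstacle is getting the two cyclotomic computations exactly right. For the first identity this is the precise shape of the distribution relation---ensuring that each prime dividing $d_{K}$ (including $\ell=2$ when $4\mid d_{K}$ or $8\mid d_{K}$) contributes exactly one Euler factor $(1-\sigma_{\ell}^{-1})$ and that higher prime powers contribute none. For the second it is the careful CRT bookkeeping of primitive roots that produces the seemingly mysterious denominator $d_{K}$: the mismatch between $\zeta_{d_{K}p^{n}}$ and the standard $\zeta_{p^{n}}$ is exactly what multiplies the naive exponent $\varphi(d_{K})/2$ by $d_{K}^{-1}$. Throughout, one must track the inverses introduced by the anti-equivariance of $\rho_{\et}$ so that the Frobenius powers land with the correct orientation.
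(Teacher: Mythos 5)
Your proposal is correct and follows essentially the same route as the paper's proof: both identities are reduced to identities of norm-compatible unit systems --- the product $\boldsymbol{c}^{(d_{K})}\cdot\iota\boldsymbol{c}^{(d_{K})}$ equals the full norm system and is computed by the cyclotomic distribution relation, while $\sigma_{-1}\iota$ is complex conjugation so the remaining discrepancy is a root-of-unity system identified with $\boldsymbol{\zeta}^{\varphi(d_{K})/(2d_{K})}$ after killing prime-to-$p$ torsion --- and then one applies the anti-equivariance of $\rho_{\et}$. The paper's argument is merely a terser version of the same computation (it writes the discrepancy as $(-\boldsymbol{\zeta})^{-\varphi(d_{K})/(2d_{K})}$ and uses $\rho_{\et}(-\boldsymbol{\zeta})=\rho_{\et}(\boldsymbol{\zeta})$ for odd $p$, which is equivalent to your discarding of prime-to-$p$ torsion).
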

\begin{proof}
The first equality follows from
\[
\boldsymbol{c}^{(d_{K})} \cdot \iota \boldsymbol{c}^{(d_{K})}
=
\left( 
\mathrm{N}_{\mathbb{Q}(\mu_{d_{K}p^{n}})/\mathbb{Q}(\mu_{p^{n}})}(1-\zeta_{d_{K}p^{n}})
\right)_{n}
=
\left(
(1-\zeta_{p^{n}})^{\prod_{\ \ell \mid d_{K}}(1-\sigma_{\ell}^{-1})}
\right)_{n}.
\] In the following of the proof, we denote an element of $\gal(\mathbb{Q}(\mu_{d_{K}p^{\infty}})/\mathbb{Q}(\mu_{d_{K}}))$ corresponding to $\sigma_{-1}$ under the isomorphism $
 \gal(\mathbb{Q}(\mu_{d_{K}p^{\infty}})/\mathbb{Q}(\mu_{d_{K}})) \xrightarrow{\sim}  \gal(K(\mu_{p^{\infty}})/K)
$ by the same letter $\sigma_{-1}$. Then we have $
\rho_{\et}(\sigma_{-1}\iota \boldsymbol{c}^{(d_{K})})
=
\sigma_{-1} \cdot \rho_{\et}(\iota \boldsymbol{c}^{(d_{K})})
$, and 
\begin{align*}
\sigma_{-1}\iota \boldsymbol{c}^{(d_{K})}
= &
\left( \mathrm{N}_{\mathbb{Q}(\mu_{d_{K}p^{n}})/K(\mu_{p^{n}})}(1-\zeta_{d_{K}p^{n}}^{-1})
\right)_{n} \\
= &
\left( \mathrm{N}_{\mathbb{Q}(\mu_{d_{K}p^{n}})/K(\mu_{p^{n}})}(-\zeta_{d_{K}p^{n}}^{-1}) \right)_{n} \cdot \boldsymbol{c}^{(d_{K})} \\
= &
\left( -\boldsymbol{\zeta}\right)^{-\frac{\varphi(d_{K})}{2d_{K}}} \cdot \boldsymbol{c}^{(d_{K})}.
\end{align*} Moreover, we have $\rho_{\et}(-\boldsymbol{\zeta})=\rho_{\et}(\boldsymbol{\zeta})$ since $p$ is odd. Hence the second equality follows.
\end{proof}

Recall that, by Lemma \ref{lmm:unity} (1), the element $\rho_{\et}(\boldsymbol{\zeta})$ generates the image of the inflation map 
\[
H^{1}(\gal(K(\mu_{p^{\infty}})/K), \Lambda(1)) \hookrightarrow H^{1}_{\et}(O_{K}[1/p], \Lambda(1)).
\] Hence we obtain the following corollary.

\begin{cor}\label{cor:key}Keeping the notation of the previous lemma, the following equality holds in $H^{1}_{\et}(O_{K}[1/p], \Lambda(1))/H^{1}(\gal(K(\mu_{p^{\infty}})/K), \Lambda(1))$:
\[
\chi_{m}^{(d_{K})}
=
\begin{dcases}
\text{$-\rho_{\et, \varepsilon_{\cyc}^{m}}(\iota \boldsymbol{c}^{(d_{k})})$ (and $\chi_{m}=0$ by Example \ref{ex:Soule})}  & \text{if $m \geq 2$ is even,} \\
\rho_{\et, \varepsilon_{\cyc}^{m}}(\iota \boldsymbol{c}^{(d_{k})})=\frac{\prod_{\ell \mid d_{K}}(1-\ell^{m-1})}{2} \chi_{m} & \text{if $m \geq 3$ is odd.}
\end{dcases}
\]
\end{cor}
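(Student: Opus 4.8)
The plan is to derive both cases by feeding the two identities of Lemma \ref{lmm:key} into the specialization map that defines $\rho_{\et, \varepsilon_{\cyc}^{m}}$ — namely the evaluation at $\varepsilon_{\cyc}^{m-1} = \varepsilon_{\cyc}^{m} \varepsilon_{\cyc}^{-1}$ — after first passing to the quotient modulo the inflation image. The crucial simplification is that, by Lemma \ref{lmm:unity} (1), the class $\rho_{\et}(\boldsymbol{\zeta})$ generates the image of the inflation map $H^{1}(\gal(K(\mu_{p^{\infty}})/K), \Lambda(1)) \hookrightarrow H^{1}_{\et}(O_{K}[1/p], \Lambda(1))$, so the term $\frac{\varphi(d_{K})}{2d_{K}}\rho_{\et}(\boldsymbol{\zeta})$ on the right-hand side of the second identity of Lemma \ref{lmm:key} dies in the quotient. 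Hence, modulo inflation, the two identities read $\rho_{\et}(\boldsymbol{c}^{(d_{K})}) + \rho_{\et}(\iota\boldsymbol{c}^{(d_{K})}) = \prod_{\ell \mid d_{K}}(1-\sigma_{\ell})\,\rho_{\et}(\boldsymbol{c}^{(1)})$ and $\rho_{\et}(\boldsymbol{c}^{(d_{K})}) = \sigma_{-1}\,\rho_{\et}(\iota\boldsymbol{c}^{(d_{K})})$, which are the two inputs I will specialize.

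Next I would record how the evaluation at $\varepsilon_{\cyc}^{m-1}$ acts: since $\varepsilon_{\cyc}(\sigma_{\ell}) = \ell$ and $\varepsilon_{\cyc}(\sigma_{-1}) = -1$, the group elements $\sigma_{\ell}$ and $\sigma_{-1}$ specialize to the scalars $\ell^{m-1}$ and $(-1)^{m-1}$ (the former normalization being the one pinned down by the explicit formula in Remark \ref{rmk:genSoule} (2)), while $\rho_{\et}(\boldsymbol{c}^{(d_{K})})$, $\rho_{\et}(\iota\boldsymbol{c}^{(d_{K})})$ and $\rho_{\et}(\boldsymbol{c}^{(1)})$ become $\chi_{m}^{(d_{K})}$, $\rho_{\et, \varepsilon_{\cyc}^{m}}(\iota\boldsymbol{c}^{(d_{K})})$ and $\chi_{m}$, respectively. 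Applying this to the (simplified) second identity gives $\chi_{m}^{(d_{K})} = (-1)^{m-1}\rho_{\et, \varepsilon_{\cyc}^{m}}(\iota\boldsymbol{c}^{(d_{K})})$, which is already the leading equality in both cases: the minus sign for even $m$ and the plus sign for odd $m$. For even $m$ this settles the even formula directly, and the parenthetical assertion $\chi_{m} = 0$ (modulo inflation) is exactly the content of Example \ref{ex:Soule}, where $\chi_{m} = \rho_{\et, \varepsilon_{\cyc}^{m}}(\boldsymbol{c}^{(1)})$ is identified with a multiple of $\rho_{\et, \varepsilon_{\cyc}^{m}}(\boldsymbol{\zeta})$, an element of the inflation image by Lemma \ref{lmm:unity} (1). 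For odd $m$, specializing the first identity yields $\chi_{m}^{(d_{K})} + \rho_{\et, \varepsilon_{\cyc}^{m}}(\iota\boldsymbol{c}^{(d_{K})}) = \prod_{\ell \mid d_{K}}(1-\ell^{m-1})\,\chi_{m}$; substituting $\chi_{m}^{(d_{K})} = \rho_{\et, \varepsilon_{\cyc}^{m}}(\iota\boldsymbol{c}^{(d_{K})})$ just obtained and dividing by $2$ — legitimate since $p \geq 5$ is odd, so $2 \in \mathbb{Z}_{p}^{\times}$ — gives $\rho_{\et, \varepsilon_{\cyc}^{m}}(\iota\boldsymbol{c}^{(d_{K})}) = \frac{\prod_{\ell \mid d_{K}}(1-\ell^{m-1})}{2}\,\chi_{m}$, which is the remaining equality.

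The only genuinely delicate point is the bookkeeping attached to the specialization map: I must verify that it is compatible with the $\Lambda$-module structure so that $\sigma_{\ell}$ and $\sigma_{-1}$ specialize to $\ell^{m-1}$ and $(-1)^{m-1}$ with the correct convention (rather than to their inverses), and that it carries the inflation image $H^{1}(\gal(K(\mu_{p^{\infty}})/K), \Lambda(1))$ into the corresponding inflation image so that reducing modulo inflation before or after specializing agrees. Both follow from the functoriality of Shapiro's isomorphism together with the construction of $\rho_{\et, \psi}$ as evaluation at $\psi\varepsilon_{\cyc}^{-1}$ and the anti-equivariance of Remark \ref{rmk:equiv}; the value $(-1)^{m-1}$ is moreover convention-independent, and the exponent $\ell^{m-1}$ is fixed by matching against the explicit Kummer-character formula of Remark \ref{rmk:genSoule} (2). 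Once these compatibilities are in place the remainder of the argument is purely formal.
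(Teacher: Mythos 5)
Your proposal is correct and is essentially the paper's own argument: the paper proves Corollary \ref{cor:key} with the single line that it follows from Lemmas \ref{lmm:unity} and \ref{lmm:key}, and you have supplied exactly those details (kill the term $\frac{\varphi(d_{K})}{2d_{K}}\rho_{\et}(\boldsymbol{\zeta})$ in the quotient via Lemma \ref{lmm:unity}(1), specialize both identities of Lemma \ref{lmm:key} at $\varepsilon_{\cyc}^{m-1}$ so that $\sigma_{-1}\mapsto(-1)^{m-1}$ and $\sigma_{\ell}\mapsto\ell^{m-1}$, then combine, with Example \ref{ex:Soule} giving $\chi_{m}=0$ for even $m$). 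Your convention checks on the specialization map match the paper's usage (e.g.\ the coefficients $N\mathfrak{a}-N\mathfrak{a}^{1-m}$ in Corollary \ref{cor:main}), so there is no gap.
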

\begin{proof}
    The assertion follows from Lemmas \ref{lmm:unity} and \ref{lmm:key}.
\end{proof}

\section{Properties of elliptic Soul\'e characters}\label{3}

\subsection{Proof of the factorization formula}\label{3.1}

In this section, we give a proof of Theorem \ref{thm:main}. First, we briefly recall a computation of the first author \cite{Is23}, relating the elliptic Soul\'e character $\chi_{\boldsymbol{m}}^{\E}$ to a certain cocycle $\chi_{m, \mathfrak{a}}^{\E}$ constructed via $\rho_{\et, \varepsilon_{\cyc}^{m}}$. 

\begin{dfn}[cf. {\cite[Definition 4.2]{Is23}}]
    Let $\mathfrak{a}$ be a nontrivial ideal of $O_{K}$ such that $(\mathfrak{a},p)=1$, and note that $(\theta_{\mathfrak{a}}(\omega_{n}))_{n \geq 1}$ forms a norm-compatible system of units by \cite[Chapter II, 2.4 Proposition (ii)]{dS87}. For an integer $m \geq 2$, we define \[
    \chi_{m, \mathfrak{a}}^{\E} \coloneqq \rho_{\et, \varepsilon_{\cyc}^{m}}\left( \left( \mathrm{N}_{K(p^{n})/K(\mu_{p^{n}})}\theta_{\mathfrak{a}}(\omega_{n}) \right)_{n} \right) \in H^{1}_{\et}(O_{K}[1/p], \mathbb{Z}_{p}(m)).
    \] 
\end{dfn}

\begin{rmk}
In fact, in \cite[Definition 4.2]{Is23}, the cocycle $\chi_{m, \mathfrak{a}}^{\E}$ is defined as an image of the norm-compatible unit $\left( \theta_{\mathfrak{a}}(\omega_{n}) \right)_{n}$ under a certain homomorphism
\[ 
\varprojlim_{n} H^{0}_{\et}(O_{K(p^{n})}\fracp, \mathbb{Z}_{p}(1)) \to H^{1}_{\et}(O_{K}\fracp, \mathbb{Z}_{p}(m)),
\] whose definition is similar to that of $\rho_{\et, \varepsilon_{\cyc}^{m}}$. This map factors through the corestriction map 
\[
\varprojlim_{n} H^{0}_{\et}(O_{K(p^{n})}\fracp, \mathbb{Z}_{p}(1)) 
\to
\varprojlim_{n} H^{0}_{\et}(O_{K(\mu_{p^{n}})}\fracp, \mathbb{Z}_{p}(1))
\] induced by $\mathrm{Spec}(O_{K(p^{n})}[1/p]) \to \mathrm{Spec}(O_{K(\mu_{p^{n}})}[1/p])$, and the resulting map coincides with $\rho_{\et, \varepsilon_{\cyc}^{m}}$.  
\end{rmk}

\begin{prp}[cf. {\cite[Proposition 4.3]{Is23}}]\label{prp:comp}
For a nontrivial ideal $\mathfrak{a}$ of $O_{K}$ prime to $p$ and $m \geq 2$, we have the following equality between characters on $G_{K(p^{\infty})}$ :
\[
w_{K} \frac{\chi_{m, \mathfrak{a}}^{\E}}{(1-p^{m-1})^{2}}
=
\left(
  N\mathfrak{a}-N\mathfrak{a}^{1-m}
\right)
\frac{\chi_{m}^{\E}}{1-p^{2m-2}}.
\] 
\end{prp}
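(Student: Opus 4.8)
The plan is to unwind $\chi_{m,\mathfrak{a}}^{\E}$ through the explicit description of $\rho_{\et,\varepsilon_{\cyc}^{m}}$ recalled before Example \ref{ex:Soule}, reduce it to the single ``unit--unit'' product
\[
P_{n} \coloneqq \prod_{u,v \in (\mathbb{Z}/p^{n}\mathbb{Z})^{\times}} \theta(u\omega_{\p,n}+v\omega_{\bar{\p},n},L)^{(uv)^{m-1}},
\]
and then compare the Kummer character of $P_{n}$ with that of $\epsilon_{(m,m),n}$ via the distribution relation. Throughout I abbreviate $g_{n}(a,b)\coloneqq\theta(a\omega_{\p,n}+b\omega_{\bar{\p},n},L)$ and write $\chi(x)$ for the Kummer character attached (at level $n$) to $x\in K(p^{\infty})^{\times}$, so that $\chi_{m}^{\E}=\chi(\epsilon_{(m,m),n})$ by Definition \ref{dfn:ellSoule} and $\chi(xy)=\chi(x)+\chi(y)$, $\chi(x^{k})=k\,\chi(x)$.

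First I would compute $\chi_{m,\mathfrak{a}}^{\E}$. By the formula characterizing $\rho_{\et,\varepsilon_{\cyc}^{m}}$, the class $\chi_{m,\mathfrak{a}}^{\E}$ is $\chi(X_{n})$ for $X_{n}\coloneqq\prod_{a\in(\mathbb{Z}/p^{n}\mathbb{Z})^{\times}}(\sigma_{a}\mathrm{N}_{K(p^{n})/K(\mu_{p^{n}})}\theta_{\mathfrak{a}}(\omega_{n}))^{a^{m-1}/p^{n}}$. Expanding the norm, $\sigma_{a}\mathrm{N}_{K(p^{n})/K(\mu_{p^{n}})}\theta_{\mathfrak{a}}(\omega_{n})=\prod_{\rho}\rho(\theta_{\mathfrak{a}}(\omega_{n}))$, where $\rho$ runs over the $\rho\in\gal(K(p^{n})/K)$ with $\varepsilon_{\cyc}(\rho)=a$. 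Using $\theta_{\mathfrak{a}}(z)=\theta(z,L)^{N\mathfrak{a}}/\theta(z,\mathfrak{a}^{-1}L)$ together with Lemma \ref{lmm:theta_val} (3) and the homogeneity $\theta(cz,cL)=\theta(z,L)$, one obtains $\rho(\theta_{\mathfrak{a}}(\omega_{n}))=g_{n}(u,v)^{N\mathfrak{a}}/g_{n}(u\alpha_{\p},v\alpha_{\bar{\p}})$ with $(u,v)=(\varepsilon_{\p}(\rho),\varepsilon_{\bar{\p}}(\rho))$ and $(\alpha_{\p},\alpha_{\bar{\p}})=(\varepsilon_{\p}(\sigma_{\mathfrak{a}}),\varepsilon_{\bar{\p}}(\sigma_{\mathfrak{a}}))$, so that $\alpha_{\p}\alpha_{\bar{\p}}\equiv\varepsilon_{\cyc}(\sigma_{\mathfrak{a}})\equiv N\mathfrak{a}$. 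The key observation is that both $g_{n}(u,v)$ and the exponent $(uv)^{m-1}$ are invariant under the $O_{K}^{\times}$-action $(u,v)\mapsto(\zeta_{\p}u,\zeta_{\bar{\p}}v)$: indeed $\theta(\zeta\omega,L)=\theta(\omega,L)$ since $\zeta L=L$, and a root of unity $\zeta\in O_{K}^{\times}$ has $N_{K/\mathbb{Q}}\zeta=1$, forcing $\zeta_{\p}\zeta_{\bar{\p}}=1$ hence $uv$ invariant. Collecting the products over all $a$ into one product over unit pairs, accounting for the $w_{K}$-to-$1$ collapse of $O_{K}^{\times}$-orbits (the product over $\gal(K(p^{n})/K)$ is the $w_{K}$-th root of the product over all unit pairs), and reindexing $(u,v)\mapsto(u\alpha_{\p},v\alpha_{\bar{\p}})$ in the denominator, I get $X_{n}=P_{n}^{(N\mathfrak{a}-N\mathfrak{a}^{1-m})/(w_{K}p^{n})}$, whence $\chi_{m,\mathfrak{a}}^{\E}=\frac{N\mathfrak{a}-N\mathfrak{a}^{1-m}}{w_{K}}\chi(P_{n})$.

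It remains to establish the purely Euler-factor identity $\chi(P_{n})=\frac{(1-p^{m-1})^{2}}{1-p^{2m-2}}\chi_{m}^{\E}$, which is where the distribution relation enters. I would introduce the full product $F_{n}\coloneqq\prod_{(a,b)\neq(0,0)}g_{n}(a,b)^{(ab)^{m-1}}$ (the terms with $a=0$ or $b=0$ being trivial). Splitting off the corner $p\mid\gcd(a,b)$ and substituting $a=pa'$, $b=pb'$, the identity $p\omega_{\p,n}=\omega_{\p,n-1}$ gives $F_{n}=\epsilon_{(m,m),n}\cdot F_{n-1}^{p^{2(m-1)}}$; passing to Kummer characters this telescopes, in the limit, to $\chi(F_{n})=\chi_{m}^{\E}/(1-p^{2m-2})$. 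On the other hand, applying the $\p$- and $\bar{\p}$-components of the distribution relation (the analogues of Lemma \ref{lmm:theta_val} (4) for the isogenies with kernels $E[\p]$ and $E[\bar{\p}]$), the same substitution one variable at a time shows that restricting each index from $\mathbb{Z}_{p}$ to $\mathbb{Z}_{p}^{\times}$ contributes a factor $(1-p^{m-1})$, so $\chi(P_{n})=(1-p^{m-1})^{2}\chi(F_{n})$. Combining these yields the Euler-factor identity, and together with the first step gives $w_{K}\chi_{m,\mathfrak{a}}^{\E}/(1-p^{m-1})^{2}=(N\mathfrak{a}-N\mathfrak{a}^{1-m})\chi_{m}^{\E}/(1-p^{2m-2})$, as claimed.

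The main obstacle is the second step: making the ``restrict to units'' geometric-series argument rigorous. Two points need care. First, the one-variable reductions leave the two $p$-power levels unequal (level $n-1$ in one factor, level $n$ in the other), so one must track the mismatched levels and check that the telescoping is compatible across $n$ before passing to the $p$-adic limit, where the series $\sum_{r\ge 0}p^{r(m-1)}=(1-p^{m-1})^{-1}$ converges and produces the factor $(1-p^{m-1})$. Second, the one-variable distribution relations for $E[\p]$ and $E[\bar{\p}]$ are not literally in the excerpt, only the $E[p]$-version of Lemma \ref{lmm:theta_val} (4); I would record them as instances of the general relation $\prod_{R\in E[\mathfrak{a}]}\theta(\omega+R,L)=\theta(\omega,\mathfrak{a}^{-1}L)$ (cf. \cite[p. 43, Theorem 4.1]{KL81}) and verify their compatibility with Lemma \ref{lmm:theta_val} (4), since composing the $E[\p]$- and $E[\bar{\p}]$-relations recovers the $E[p]$-relation ($\pi$ acting invertibly on $E[\bar{\p}]$). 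Everything else is routine bookkeeping with the $O_{K}^{\times}$-action and with norms, once the invariance noted in the first step is in place.
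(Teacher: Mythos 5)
The paper itself contains no proof of Proposition \ref{prp:comp}: it is imported from \cite[Proposition 4.3]{Is23} (hence the ``cf.'' in the statement), so there is no internal argument to compare yours against. Your proposal supplies a complete proof, and as far as I can check it is correct; moreover its shape --- (i) unwinding $\rho_{\et,\varepsilon_{\cyc}^{m}}$, using the $O_{K}^{\times}$-invariance of $(u,v)\mapsto g_{n}(u,v)^{(uv)^{m-1}}$ to account for $w_{K}$ and the reindexing $(u,v)\mapsto(u\alpha_{\p},v\alpha_{\bar{\p}})$ coming from $\theta_{\mathfrak{a}}=\theta(\cdot,L)^{N\mathfrak{a}}/\theta(\cdot,\mathfrak{a}^{-1}L)$ to produce $N\mathfrak{a}-N\mathfrak{a}^{1-m}$, then (ii) trading the unit-restricted product $P_{n}$ for $\epsilon_{(m,m),n}$ via distribution relations to produce $(1-p^{m-1})^{2}/(1-p^{2m-2})$ --- is exactly the kind of direct Kummer-theoretic computation that the cited reference carries out. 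Your step (i) is the expanded form of the one-line identity $(\theta(\omega_{n},L))_{n}^{N\mathfrak{a}-\sigma_{\mathfrak{a}}}=(\theta_{\mathfrak{a}}(\omega_{n}))_{n}$, which is the conceptual way to remember where the factor $N\mathfrak{a}-N\mathfrak{a}^{1-m}$ comes from.

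Two of the points you flagged as needing care deserve to be written out, and both do work. First, the telescoping in (ii) is cleanest as a fixed-point equation rather than a geometric series: the full products $F_{n}$ form a compatible system by the same grouping argument that proves Lemma \ref{lmm:epsilon} (2), so $\chi(F)\coloneqq\varprojlim_{n}\chi(F_{n})$ exists, and the relation $F_{n}=\epsilon_{(m,m),n}\cdot F_{n-1}^{p^{2m-2}}$ gives $\chi(F)=\chi_{m}^{\E}+p^{2m-2}\chi(F)$ outright. Second, the genuinely delicate point hidden in your ``routine bookkeeping'': the one-variable distribution relation does not return the chosen bases at the lower level. Writing $p=\pi\bar{\pi}$ with $\p=(\pi)$, summing over $E[\p]$-translates produces $\theta(\pi\,\cdot\,,L)$, and one has $\pi\omega_{\p,n}=\bar{\pi}_{\p}^{-1}\omega_{\p,n-1}$, while $\pi$ acts on $E[\bar{\p}^{k}]$ by the unit $\pi_{\bar{\p}}$; here $\bar{\pi}_{\p}$ and $\pi_{\bar{\p}}$ denote the images of $\bar{\pi}$ and $\pi$ in $(\mathbb{Z}/p^{k}\mathbb{Z})^{\times}$ via $O_{K}/\p^{k}$ and $O_{K}/\bar{\p}^{k}$ respectively. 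Reindexing the resulting lower-level product therefore twists all exponents by $(\bar{\pi}_{\p}\pi_{\bar{\p}}^{-1})^{m-1}$, and your factor $(1-p^{m-1})$ per variable is correct only because this twist is trivial: complex conjugation interchanges the two embeddings $K\hookrightarrow\mathbb{Q}_{p}$, so $\bar{\pi}_{\p}=\pi_{\bar{\p}}$. Once this cancellation is recorded, the inclusion--exclusion $\chi(P)=(1-2p^{m-1}+p^{2m-2})\chi(F)$ holds, and combining it with your step (i) yields the proposition exactly as you state.
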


\begin{comment}
\begin{rmk}\label{rmk:ellSoule}
    The proof of Proposition \ref{prp:comp} in \cite{Is23} is based on direct computations. To understand the proof in a slightly more conceptual way, note that there is a commutative diagram
    \begin{eqnarray*}
        \begin{tikzcd}
        \varprojlim_{n} K(p^{n})^{\times}/K(p^{n})^{\times p^{n}} \arrow[r, "\rho_{\et, \varepsilon_{\cyc}^{m}}"]\arrow[d] & H^{1}(G_{K}, \mathbb{Z}_{p}(m)) \arrow[d, "\mathrm{res}"] \\
        \varprojlim_{n} (K(p^{\infty})^{\times}/K(p^{\infty})^{\times p^{n}})^{\gal(K(p^{\infty})/K(p^{n}))} \arrow[r] & H^{1}(G_{K(p^{\infty})}, \mathbb{Z}_{p}(m))^{\gal(K(p^{\infty})/K)}.
        \end{tikzcd}
        \end{eqnarray*} Here, the left vertical map is induced by inclusions and the bottom horizontal map is defined in a similar way to $\rho_{\et}$. Then $(\theta(\omega_{n},L))_{n}$ defines an element of the group at the left bottom and its image is equal to $\chi_{m}^{\E}$ times an explicit constant. Since we have $(\theta(\omega_{n},L))_{n}^{N\mathfrak{a}-\sigma_{\mathfrak{a}}}=(\theta_{\mathfrak{a}}(\omega_{n}))_{n}$, it follows that $(N\mathfrak{a}-N\mathfrak{a}^{1-m})\chi_{m}^{\E}$ is equal to $\chi_{m, \mathfrak{a}}$ times an explicit constant.
\end{rmk}
\end{comment}

 Results obtained in the previous section allow us to compare three characters $\chi_{m, \mathfrak{a}}^{\E}$, $\chi_{m}$ and $\chi_{m}^{(d_{K})}$. Before explaining the result, we recall the notion of the Stickelberger element \cite[\textsection 6.2]{Wa82}. For an integer $d$ such that $(d,p)=1$, we define 
\[
\boldsymbol{\theta}_{d}=(\theta_{d,n}) \coloneqq  \ab (\sum_{a \in (\mathbb{Z}/dp^n\mathbb{Z})^\times} B_1 \ab( \Bab{\frac{a}{dp^{n}}}) \sigma_a^{-1} )_n \in \varprojlim_{n} \mathbb{Q}[\gal(\mathbb{Q}(\mu_{dp^{n}})/\mathbb{Q})].
\] For every integer $c$ such that $(c,pd)=1$, it follows by \cite[Lemma 6.9]{Wa82} that
\[
(1-c\sigma_{c}^{-1}) \boldsymbol{\theta}_{d} \in \mathbb{Z}_{p}[[\gal(\mathbb{Q}(\mu_{dp^{\infty}})/\mathbb{Q})]].
\] We denote the involution on $\mathbb{Z}_{p}[[\gal(\mathbb{Q}(\mu_{dp^{\infty}})/\mathbb{Q})]]$ induced by $\sigma_{a} \mapsto \sigma_{a}^{-1}$ by $\ast$. Then the image of the Stickelberger element under this involution
\[
(1-c\sigma_{c}) \boldsymbol{\theta}_{d}^{\ast} \in \mathbb{Z}_{p}[[\gal(\mathbb{Q}(\mu_{dp^{\infty}})/\mathbb{Q})]]
\] can be described in terms of the Kubota-Leopoldt $p$-adic $L$-function by e.g. \cite[Examples (2) before Theorem 12.4]{Wa82}. In the following theorem, we will regard $(1-c\sigma_{c})\boldsymbol{\theta}_{1}^{\ast}$ as an element of $\Lambda=\mathbb{Z}_{p}[[\gal(K(\mu_{p^{\infty}})/K)]]$. We also regard $(1-c\sigma_{c}^{-1})\chi_{K}(\boldsymbol{\theta}_{d_{K}}^{\ast})$ as an element of $\mathbb{Z}_{p}[[\gal(K(\mu_{p^{\infty}})/K)]]$, where $\chi_{K}$ denotes a homomorphism \[
\mathbb{Z}_{p}[[\gal(\mathbb{Q}(\mu_{d_{K}p^{\infty}})/\mathbb{Q})]] 
\to
\mathbb{Z}_{p}[[\gal(\mathbb{Q}(\mu_{p^{\infty}})/\mathbb{Q})]]\cong \Lambda 
\] induced by the Dirichlet character $\chi_{K}$ associated to $K$. 

\begin{thm}\label{thm:main_measure}
Let $\mathfrak{a}$ be a nontrivial ideal of $O_{K}$ such that $(\mathfrak{a},d_{K}p)=1$. The following equality holds in $H^{1}_{\et}(O_{K}[1/p], \Lambda(1))/H^{1}(\gal(K(\mu_{p^{\infty}})/K), \Lambda(1))$:
\begin{align*}
& \rho_{\et}\left( \left( \mathrm{N}_{K(p^{n})/K(\mu_{p^{n}})}\theta_{\mathfrak{a}}(\omega_{n}) \right)_{n} \right) \\
=&
-3\left( (N\mathfrak{a}-\sigma_{N\mathfrak{a}}^{-1}) 
\chi_{K}(\boldsymbol{\theta}_{d_{K}}^{\ast}) \rho_{\et}(\boldsymbol{c}^{(1)})
+(1-\sigma_{-1})(N\mathfrak{a}-\sigma_{N\mathfrak{a}}^{-1}) 
\boldsymbol{\theta}_{1}^{\ast} \rho_{\et}(\boldsymbol{c}^{(d_{K})})
\right).
\end{align*}
\end{thm}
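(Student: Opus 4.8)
The plan is to push Kersey's multiplicative comparison (Corollary \ref{cor:Kersey_mult}) through the \'etale regulator map $\rho_{\et}$ and then repackage the resulting combinatorial data into the Stickelberger elements $\boldsymbol{\theta}_{1}$ and $\chi_{K}(\boldsymbol{\theta}_{d_{K}})$. First I would take the norm-compatible system $(\mathrm{N}_{K(p^{n})/K(\mu_{p^{n}})}\theta_{\mathfrak{a}}(\omega_{n}))_{n}$, pass to fourth powers, and rewrite it via Corollary \ref{cor:Kersey_mult} as $\prod_{b}(1-\zeta_{d_{K}p^{n}}^{b})^{e_{n,b}}$ modulo $\mu(K(p^{n}))$, where $e_{n,b}=p^{-n}(N\mathfrak{a}\,\nu_{n}(1,b)-\nu_{n}(N\mathfrak{a},b))\in\mathbb{Z}$. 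Applying $\rho_{\et}$, the roots of unity are harmless: by Lemma \ref{lmm:unity} (1) they land in the inflation subgroup $H^{1}(\gal(K(\mu_{p^{\infty}})/K),\Lambda(1))$ and hence vanish in the target quotient. Since $p\geq 5$, the factor $4$ is a unit in $\mathbb{Z}_{p}$ and can be divided out, so modulo inflation $\rho_{\et}((\mathrm{N}_{K(p^{n})/K(\mu_{p^{n}})}\theta_{\mathfrak{a}}(\omega_{n}))_{n})$ equals $\tfrac14\rho_{\et}((\prod_{b}(1-\zeta_{d_{K}p^{n}}^{b})^{e_{n,b}})_{n})$, and the remaining task is to evaluate this.

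Next I would split each exponent along the decomposition $\chi_{K}(a)+\chi_{K}(b)$ that defines $\nu_{n}$, writing $e_{n,b}=e_{n,b}^{(a)}+e_{n,b}^{(b)}$, where $e_{n,b}^{(a)}$ collects the $\chi_{K}(a)$-terms and $e_{n,b}^{(b)}=\chi_{K}(b)\cdot(\cdots)$ the $\chi_{K}(b)$-terms. For the $\chi_{K}(a)$-part I would reorganize $\prod_{b}(1-\zeta_{d_{K}p^{n}}^{b})^{e_{n,b}^{(a)}}$ as a double product over pairs $(a,b)$ with $ab\equiv c\bmod p^{n}$ and, for each fixed $a\in(\mathbb{Z}/d_{K}p^{n}\mathbb{Z})^{\times}$, carry out the inner product over the residues of $b$ modulo $d_{K}$. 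The elementary identity $\prod_{j=0}^{d_{K}-1}(1-x\zeta_{d_{K}}^{j})=1-x^{d_{K}}$ collapses this inner product to the conductor-$p^{n}$ cyclotomic unit $1-\zeta_{p^{n}}^{ca^{-1}}=\sigma_{ca^{-1}}(\boldsymbol{c}^{(1)}_{n})$. Feeding this through the anti-equivariance $\rho_{\et}(\boldsymbol{\alpha}^{\sigma})=\sigma^{-1}\rho_{\et}(\boldsymbol{\alpha})$ of Remark \ref{rmk:equiv} turns the weight $\chi_{K}(a)B_{1}(\Bab{a/d_{K}p^{n}})$ together with the $\sigma_{ca^{-1}}$-twist into the group-ring element $\chi_{K}(\boldsymbol{\theta}_{d_{K}}^{\ast})$, while the two values $c=1,N\mathfrak{a}$ (with the $N\mathfrak{a}$-weighting) combine into the common factor $N\mathfrak{a}-\sigma_{N\mathfrak{a}}^{-1}$. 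After dividing by $4$ this produces the first summand $-3(N\mathfrak{a}-\sigma_{N\mathfrak{a}}^{-1})\chi_{K}(\boldsymbol{\theta}_{d_{K}}^{\ast})\rho_{\et}(\boldsymbol{c}^{(1)})$.

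For the $\chi_{K}(b)$-part the roles are reversed, since now $\chi_{K}(b)$ sits in the exponent of $1-\zeta_{d_{K}p^{n}}^{b}$. Here I would first evaluate the inner sum $\sum_{a:\,ab\equiv c}B_{1}(\Bab{a/d_{K}p^{n}})$ over the residues of $a$ modulo $d_{K}$ using the distribution relation $\sum_{t=0}^{d_{K}-1}B_{1}(\{x+t/d_{K}\})=B_{1}(\{d_{K}x\})$, which reduces it to $B_{1}(\Bab{cb^{-1}/p^{n}})$ at level $p^{n}$. Grouping the $b$'s by their residues modulo $d_{K}$ and modulo $p^{n}$, the $\chi_{K}$-twisted inner product over $(\mathbb{Z}/d_{K}\mathbb{Z})^{\times}$ equals $\sigma_{b_{p}}(\boldsymbol{c}^{(d_{K})}_{n}/\iota\,\boldsymbol{c}^{(d_{K})}_{n})$, because $\chi_{K}(-1)=-1$ identifies the nontrivial coset of $\ker\chi_{K}\subset(\mathbb{Z}/d_{K}\mathbb{Z})^{\times}$ with its image under $\iota$. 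Applying $\rho_{\et}$ and the anti-equivariance converts the level-$p^{n}$ weights into $-12(N\mathfrak{a}-\sigma_{N\mathfrak{a}}^{-1})\boldsymbol{\theta}_{1}^{\ast}$ acting on $\rho_{\et}(\boldsymbol{c}^{(d_{K})})-\rho_{\et}(\iota\boldsymbol{c}^{(d_{K})})$, which by the second identity of Lemma \ref{lmm:key} is congruent to $(1-\sigma_{-1})\rho_{\et}(\boldsymbol{c}^{(d_{K})})$ modulo inflation. Dividing by $4$ yields the second summand, and adding the two parts gives the asserted equality.

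The hard part will be the bookkeeping in the two middle steps: matching the weights $B_{1}(\Bab{a/d_{K}p^{n}})$ and the Galois twists arising from the constraints $ab\equiv c\bmod p^{n}$ with the precise group-ring elements $\chi_{K}(\boldsymbol{\theta}_{d_{K}}^{\ast})$ and $\boldsymbol{\theta}_{1}^{\ast}$, and in particular producing the factors $N\mathfrak{a}-\sigma_{N\mathfrak{a}}^{-1}$ and $1-\sigma_{-1}$ in the correct slots rather than variants such as $1-N\mathfrak{a}\sigma_{N\mathfrak{a}}$. A secondary point to verify is integrality: although $\boldsymbol{\theta}_{1}$ and $\boldsymbol{\theta}_{d_{K}}$ individually lie only in $\mathbb{Q}[[\cdots]]$, the combination $(N\mathfrak{a}-\sigma_{N\mathfrak{a}}^{-1})\boldsymbol{\theta}_{1}^{\ast}=-\sigma_{N\mathfrak{a}}^{-1}(1-N\mathfrak{a}\sigma_{N\mathfrak{a}})\boldsymbol{\theta}_{1}^{\ast}$ (and its $\chi_{K}$-twisted analogue) lies in $\Lambda$ by the integrality of $(1-c\sigma_{c})\boldsymbol{\theta}_{d}^{\ast}$; this is in any case automatic \emph{a posteriori}, as each $e_{n,b}$ is an integer by \cite[Corollary 5.3]{Ke80}. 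Once the two elementary product/distribution identities are in place and the $\iota$-to-$\sigma_{-1}$ passage via Lemma \ref{lmm:key} is made, the constant $-3=-12/4$ and all group-ring factors fall out, completing the proof in $H^{1}_{\et}(O_{K}[1/p],\Lambda(1))/H^{1}(\gal(K(\mu_{p^{\infty}})/K),\Lambda(1))$.
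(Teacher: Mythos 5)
Your proposal is correct, and it reaches the stated identity by a genuinely different decomposition from the paper's, although the overall skeleton is the same: both proofs start from Corollary \ref{cor:Kersey_mult}, absorb the root-of-unity ambiguity into the inflation subgroup via Lemma \ref{lmm:unity} (1), divide out the unit $4$, and finish with the anti-equivariance of Remark \ref{rmk:equiv} together with Lemma \ref{lmm:key}. The difference lies in how the double sum defining $\nu_{n}$ is factored. The paper groups the \emph{units} $1-\zeta_{d_{K}p^{n}}^{b}$ by the value $\chi_{K}(b)\in\{0,\pm1\}$, keeping the full exponent $e_{n,b}$ for each family: the families $\chi_{K}(b)=\pm1$ produce $\boldsymbol{c}^{(d_{K})}$ and $\iota\boldsymbol{c}^{(d_{K})}$ with exponents $-12(N\mathfrak{a}-\sigma_{N\mathfrak{a}})\left(\chi_{K}(\theta_{d_{K},n})\pm\theta_{1,n}\right)$, while the family $\chi_{K}(b)=0$ produces $\boldsymbol{c}^{(1)}$ \emph{using that $d_{K}$ is a prime power} (a consequence of class number one); both identities of Lemma \ref{lmm:key} are then needed, and the resulting $\sigma_{\ell}$-terms cancel only at the very end. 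You instead split the \emph{exponents}, $e_{n,b}=e^{(a)}_{n,b}+e^{(b)}_{n,b}$, according to which character factor carries the weight. This buys two simplifications: since $e^{(a)}_{n,b}$ is independent of $b\bmod d_{K}$, the product over all $d_{K}$ lifts of a fixed $\beta\in(\mathbb{Z}/p^{n}\mathbb{Z})^{\times}$ collapses via $\prod_{j}(1-x\zeta_{d_{K}}^{j})=1-x^{d_{K}}$, so the summand $-3(N\mathfrak{a}-\sigma_{N\mathfrak{a}}^{-1})\chi_{K}(\boldsymbol{\theta}_{d_{K}}^{\ast})\rho_{\et}(\boldsymbol{c}^{(1)})$ comes out in one stroke, with no prime-power hypothesis on $d_{K}$ and no use of the first identity of Lemma \ref{lmm:key}; and the $\chi_{K}(b)$-part then needs only the second identity of that lemma. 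Your explicit appeal to the $B_{1}$-distribution relation is not an extra cost, since the paper uses it silently when rewriting $\beta_{n}^{\pm}$ in terms of $\theta_{1,n}$. Two bookkeeping points should be made explicit in a write-up: identifying the $\chi_{K}(b)=\pm1$ inner products with $\sigma_{\beta}$-twists of $\mathrm{N}_{\mathbb{Q}(\mu_{d_{K}p^{n}})/K(\mu_{p^{n}})}(1-\zeta_{d_{K}p^{n}})$ and of its $\iota$-conjugate uses $\chi_{K}(p)=1$, i.e.\ that $p$ splits in $K$; and obtaining the clean factor $N\mathfrak{a}-\sigma_{N\mathfrak{a}}^{-1}$ inside the twisted Stickelberger element uses $\chi_{K}(N\mathfrak{a})=1$. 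With these noted, your outline compiles into a complete proof of the theorem.
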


\begin{proof}
First, we construct $\alpha_{n}, \beta_{n}^{+}$ and $\beta_{n}^{-} \in \mathbb{Z}[\gal(K(\mu_{p^{n}})/K)]$ such that the equality
\[
\mathrm{N}_{K(p^{n})/K(\mu_{p^{n}})}(\theta_{\mathfrak{a}}(\omega_{n}))^{4} \equiv (1-\zeta_{p^{n}})^{\alpha_{n}} \cdot \mathrm{N}(1-\zeta_{d_{K}p^{n}})^{\beta_{n}^{+}} \cdot \mathrm{N}(1-\iota(\zeta_{d_{K}p^{n}}))^{\beta_{n}^{-}} \bmod \mu(K(\mu_{p^{n}}))
\] holds for each $n$, where $\iota$ is defined in Lemma \ref{lmm:key}. Using Corollary \ref{cor:Kersey_mult} and the observation that the exponent $\frac{N\mathfrak{a} \cdot\nu_{n}(1,b)-\nu_{n}(N\mathfrak{a}, b)}{p^{n}}$ in that corollary depends only on the pair $(\chi_{K}(b), b \bmod p^{n})$, we can rewrite $\mathrm{N}_{K(p^{n})/K(\mu_{p^{n}})}(\theta_{\mathfrak{a}}(\omega_{n}))^{4}$ as 
\[
\mathrm{N}_{K(p^{n})/K(\mu_{p^{n}})}(\theta_{\mathfrak{a}}(\omega_{n}))^{4} \equiv 
\prod_{(\alpha,\beta) \in \{ 0, \pm 1\} \times (\mathbb{Z}/p^{n}\mathbb{Z})^{\times}} \left(\prod_{(\chi_{K}(b),b \bmod p^{n})=(\alpha, \beta)}(1-\zeta_{d_{K}p^{n}}^{b}) \right)^{\frac{N\mathfrak{a} \cdot\nu_{n}(1,b)-\nu_{n}(N\mathfrak{a}, b)}{p^{n}}}
\] modulo $\mu(K(\mu_{p^{n}}))$. In the following, we simplify the term
\[
\prod_{(\chi_{K}(b),b \bmod p^{n})=(\alpha, \beta)}(1-\zeta_{d_{K}p^{n}}^{b})
\] according to $(\alpha, \beta)$. If $\alpha=1$, then we have
\begin{align*}
\prod_{(\chi_{K}(b),b \bmod p^{n})=(1, \beta)}(1-\zeta_{d_{K}p^{n}}^{b})
&= \prod_{b} (1-\zeta_{d_{K}}^{bp^{-n}} \cdot \zeta_{p^{n}}^{d_{K}^{-1}b}) \\
&= \prod_{\substack{a \in \mathbb{Z}/d_{K}\mathbb{Z} \\ \chi_{K}(a)=1}} (1-\zeta_{d_{K}}^{a} \cdot \zeta_{p^{n}}^{d_{K}^{-1}b}) \\
&= \mathrm{N}_{\mathbb{Q}(\mu_{d_{K}p^{n}})/K(\mu_{p^{n}})}(1-\zeta_{d_{K}p^{n}})^{\sigma_{b}}.    
\end{align*} Here, we use $\chi_{K}(p)=1$ to deduce the second equality. A similar argument shows that
\[
\prod_{(\chi_{K}(b),b \bmod p^{n})=(-1, \beta)}(1-\zeta_{d_{K}p^{n}}^{b})=\mathrm{N}_{\mathbb{Q}(\mu_{d_{K}p^{n}})/K(\mu_{p^{n}})}(1-\iota(\zeta_{d_{K}p^{n}}))^{\sigma_{b}}
\] when $\alpha=-1$. Finally, if $\alpha=0$, then we have
\begin{align*}
    \prod_{(\chi_{K}(b),b \bmod p^{n})=(0, \beta)}(1-\zeta_{d_{K}p^{n}}^{b})
&= \prod_{\zeta} (1-\zeta \cdot \zeta_{p^{n}}^{d_{K}^{-1}b})
\end{align*} where $\zeta$ runs over the set of non-primitive $d_{K}$-th roots of unity. Note that $d_{K}$ is a power of a prime $\ell$ since we assume that the class number of $K$ is one. Using this observation, it holds by direct computation that
\[
 \prod_{\zeta} (1-\zeta \cdot \zeta_{p^{n}}^{d_{K}^{-1}b})=(1-\zeta_{p^{n}}^{\ell^{-1}b})=(1-\zeta_{p^{n}})^{\sigma_{\ell^{-1}b}}.
\] Hence we have
\[
\mathrm{N}_{K(p^{n})/K(\mu_{p^{n}})}\left( \theta_{\mathfrak{a}}(\omega_{n}) \right)^{4} \equiv (1-\zeta_{p^{n}})^{\alpha_{n}} \cdot \mathrm{N}(1-\zeta_{d_{K}p^{n}})^{\beta_{n}^{+}} \cdot \mathrm{N}(1-\iota(\zeta_{d_{K}p^{n}}))^{\beta_{n}^{-}} \bmod \mu(K(\mu_{p^{n}})),
\] where we define three elements $\alpha_{n}$, $\beta_{n}^{+}$ and $\beta_{n}^{-}$ by
\begin{align*}
    \alpha_{n} &=  -12 \sigma_{\ell}^{-1}\sum_{b \in (\mathbb{Z}/p^{n}\mathbb{Z})^{\times}}\sum_{\substack{a \in (\mathbb{Z}/d_{K}p^{n}\mathbb{Z})^{\times} \\ ab \equiv 1 \bmod p^{n}}}\chi_{K}(a)\left( N\mathfrak{a}B_{1} \left( \Bab{\frac{a}{d_{K}p^{n}}} \right)-B_{1} \left( \Bab{\frac{aN\mathfrak{a}}{d_{K}p^{n}}} \right) \right) \sigma_{b},\\
    \beta_{n}^{\pm} &= -12 \sum_{b \in (\mathbb{Z}/p^{n}\mathbb{Z})^{\times}}\sum_{\substack{a \in \mathbb{Z}/d_{K}p^{n}\mathbb{Z} \\ ab \equiv 1 \bmod p^{n}}}\left(\chi_{K}(a) \pm 1 \right)\left( N\mathfrak{a}B_{1} \left( \Bab{\frac{a}{d_{K}p^{n}}} \right)-B_{1} \left( \Bab{\frac{aN\mathfrak{a}}{d_{K}p^{n}}} \right) \right) \sigma_{b}. 
\end{align*} Here, to obtain these expressions from the definition of $\nu_{n}(c,b)$ given in Theorem \ref{thm:Kersey_mult}, we use $\chi_{K}(N\mathfrak{a})=1$. In terms of Stickelberger elements, these three elements are also written as
\begin{align*}
    \alpha_{n} &= -12\sigma_{\ell}^{-1}(N\mathfrak{a}-\sigma_{N\mathfrak{a}})\chi_{K}(\theta_{d_{K},n}),\\
    \beta_{n}^{\pm} &=  -12(N\mathfrak{a}-\sigma_{N\mathfrak{a}})\left(\chi_{K}(\theta_{d_{K},n}) \pm \theta_{1,n} \right).
\end{align*} 
Hence it follows by Lemma \ref{lmm:key} that
\begin{align*}
    & \rho_{\et}\left( \left( \mathrm{N}_{K(p^{n})/K(\mu_{p^{n}})}\theta_{\mathfrak{a}}(\omega_{n}) \right)_{n} \right) \\
    = &
    -3 \Bigl( \sigma_{\ell}(N\mathfrak{a}-\sigma_{N\mathfrak{a}}^{-1})\chi_{K}(\boldsymbol{\theta}_{d_{K}}^{\ast})\rho_{\et}(\boldsymbol{c}^{(d_{K})})+(1-\sigma_{\ell})
    (N\mathfrak{a}-\sigma_{N\mathfrak{a}}^{-1})\chi_{K}(\boldsymbol{\theta}_{d_{K}}^{\ast})\rho_{\et}(\boldsymbol{c}^{(d_{K})}) \\
    &\phantom{-3 \Bigl( \sigma_{\ell}(N\mathfrak{a}-\sigma_{N\mathfrak{a}}^{-1})\chi_{K}(\boldsymbol{\theta}_{d_{K}}^{\ast})\rho_{\et}(\boldsymbol{c}^{(d_{K})}\;)}+(1-\sigma_{-1})
    (N\mathfrak{a}-\sigma_{N\mathfrak{a}}^{-1})\boldsymbol{\theta}_{1}^{\ast}\rho_{\et}(\boldsymbol{c}^{(1)}) \Bigr) \\
    =& -3\left( (N\mathfrak{a}-\sigma_{N\mathfrak{a}}^{-1}) 
\chi_{K}(\boldsymbol{\theta}_{d_{K}}^{\ast})\rho_{\et}(\boldsymbol{c}^{(1)})
+(1-\sigma_{-1})(N\mathfrak{a}-\sigma_{N\mathfrak{a}}^{-1}) 
\boldsymbol{\theta}_{1}^{\ast}\rho_{\et}(\boldsymbol{c}^{(d_{K})})
\right)
\end{align*} modulo $\mathbb{Z}_{p} \cdot \rho_{\et}(\boldsymbol{\zeta})=H^{1}(\gal(K(\mu_{p^{\infty}})/K), \Lambda(1))$, as desired.
\end{proof}

\begin{rmk}\label{rmk:padicL}
We have two remarks regarding Theorem \ref{thm:main_measure}.
\begin{enumerate}
\item Since Theorem \ref{thm:Kersey} compares logarithms of absolute values of special values of the fundamental theta function and cyclotomic units, we cannot generalize Theorem \ref{thm:main_measure} to an exact equality in $H^{1}_{\et}(O_{K}[1/p], \Lambda(1))$. Such a generalization may be related to the notion of \emph{$\ell$-adic Galois L-functions} of Wojtkowiak \cite{Wo17}\footnote{We appreciate Densuke Shiraishi for informing us about the paper \cite{Wo17}.}. 
\item Using the Kubota-Leopoldt $p$-adic $L$-function $L_{p}$ \cite[Theorem 5.11]{Wa82}, we can rephrase the right-hand side of the equality in Theorem \ref{thm:main_measure} as follows: Let $K_{\infty}$ be the cyclotomic $\mathbb{Z}_{p}$-extension of $K$. We decompose the Galois group $\gal(K(\mu_{p^{\infty}})/K)$ as
\[
\gal(K(\mu_{p^{\infty}})/K)=\gal(K(\mu_{p})/K) \times \gal(K_{\infty}/K)= \Delta \times \Gamma,
\] and the $p$-adic cyclotomic character $\varepsilon_{\cyc}$ as $\varepsilon_{\cyc}(\sigma)=\omega(\sigma) \ab<\sigma>$, where $\omega$ is the Teichm\"{u}ller character and the character $\ab<  > \colon \Gamma \xrightarrow{\sim} 1+p\mathbb{Z}_{p}$ is defined by $ \ab<\sigma_{1+p}> \coloneqq 1+p$. For $i \in \mathbb{Z}/(p-1)\mathbb{Z}$ and $s \in \mathbb{Z}_{p}$, specializing the right-hand side of Theorem \ref{thm:main_measure} at the character $\omega^{i}\ab<  >^{s}$ gives
\begin{align*} 
\begin{dcases}
3(N\mathfrak{a}-\omega^{1-i}(\sigma_{N\mathfrak{a}})\ab<\sigma_{N\mathfrak{a}}>^{1-s})L_{p}(1-s, \chi_{K}\omega^{i})\rho_{\et,  \omega^{i}\ab<  >^{s}}(\boldsymbol{c}^{(1)}) & \text{if $i$ is odd, } \\
6(N\mathfrak{a}-\omega^{1-i}(\sigma_{N\mathfrak{a}})\ab<\sigma_{N\mathfrak{a}}>^{1-s})L_{p}(1-s, \omega^{i})\rho_{\et,  \omega^{i}\ab<  >^{s}}(\boldsymbol{c}^{(d_{K})}) & \text{if $i$ is even and $(i,s) \neq (0,0)$.} 
\end{dcases}
\end{align*} 
\end{enumerate}
\end{rmk}

Theorem \ref{thm:main} now follows immediately from Theorem \ref{thm:main_measure}:

\begin{cor}\label{cor:main} For each $m \geq 2$, we have the following equality between characters on $G_{K(p^{\infty})}$:
\[
\chi_{m, \mathfrak{a}}^{\E}=
\begin{dcases}
-6(N\mathfrak{a}-N\mathfrak{a}^{1-m})(1-p^{m-1})\frac{B_{m}}{m}\chi_{m}^{(d_{K})} & \text{if $m$ is even,} \\
-3(N\mathfrak{a}-N\mathfrak{a}^{1-m})(1-p^{m-1})\frac{B_{m, \chi_{K}}}{m}\chi_{m} & \text{if $m$ is odd.}
\end{dcases}
\]
\end{cor}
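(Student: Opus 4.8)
The plan is to obtain Corollary \ref{cor:main} by specializing the measure-level identity of Theorem \ref{thm:main_measure} at the $(m-1)$-st power of the cyclotomic character. Concretely, I would apply the evaluation homomorphism $\rho_{\et,\varepsilon_{\cyc}^m}$ to both sides; by the definition of $\rho_{\et,\psi}$ this amounts to evaluating every $\Lambda$-coefficient on the right-hand side at $\phi := \varepsilon_{\cyc}^m\varepsilon_{\cyc}^{-1} = \varepsilon_{\cyc}^{m-1}$ and replacing $\rho_{\et}(\boldsymbol{c}^{(1)})$, $\rho_{\et}(\boldsymbol{c}^{(d_{K})})$ by $\chi_m = \rho_{\et,\varepsilon_{\cyc}^m}(\boldsymbol{c}^{(1)})$ and $\chi_m^{(d_{K})} = \rho_{\et,\varepsilon_{\cyc}^m}(\boldsymbol{c}^{(d_{K})})$ respectively (Example \ref{ex:Soule}, Definition \ref{dfn:genSoule}), while the left-hand side becomes $\chi_{m,\mathfrak{a}}^{\E}$ by definition. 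The identity of Theorem \ref{thm:main_measure} holds only modulo $H^1(\gal(K(\mu_{p^{\infty}})/K),\Lambda(1))$, but this causes no trouble here: after restricting the resulting cocycles to $G_{K(p^{\infty})} \subset G_{K(\mu_{p^{\infty}})}$, the inflation subspace maps to zero by the inflation--restriction sequence, so the congruence becomes a genuine equality of characters on $G_{K(p^{\infty})}$, which is exactly what the corollary asserts.

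Next I would carry out the elementary scalar evaluations. Since $\varepsilon_{\cyc}(\sigma_{N\mathfrak{a}}) = N\mathfrak{a}$, the factor $N\mathfrak{a}-\sigma_{N\mathfrak{a}}^{-1}$ specializes to $N\mathfrak{a}-N\mathfrak{a}^{1-m}$, accounting for the common prefactor in both branches of the corollary. The factor $1-\sigma_{-1}$ specializes to $1-(-1)^{m-1}$, which vanishes for odd $m$ and equals $2$ for even $m$. This is precisely the mechanism selecting a single surviving term according to the parity of $m$: for odd $m$ the $\rho_{\et}(\boldsymbol{c}^{(d_{K})})$-term is killed and only the $\chi_m$-term remains, while for even $m$ the $2$ survives in the $\chi_m^{(d_{K})}$-term. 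As a consistency check, for even $m$ the would-be $\chi_m$-term also vanishes independently, because its coefficient is proportional to $B_{m,\chi_K}$ and $B_{m,\chi_K}=0$ when $m$ is even (recall $\chi_K$ is odd, so $\chi_K(-1)=-1$); this matches the fact that $\chi_m$ is torsion for even $m$ (Example \ref{ex:Soule}).

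The substantive step is to evaluate the two Stickelberger elements at $\varepsilon_{\cyc}^{m-1}$. Writing $\boldsymbol{\theta}_d^{\ast} = (\sum_{a\in(\mathbb{Z}/dp^{n}\mathbb{Z})^{\times}} B_1(\Bab{\frac{a}{dp^{n}}})\sigma_a)_n$, the value of $\chi(\boldsymbol{\theta}_d^{\ast})$ at $\varepsilon_{\cyc}^{m-1}$ is the $p$-adic limit of $\sum_a \chi(a)\,\varepsilon_{\cyc}^{m-1}(\sigma_a)\,B_1(\Bab{\frac{a}{dp^{n}}})$, and I would identify this, via the standard relation between Stickelberger elements (equivalently Mazur measures) and generalized Bernoulli numbers, with $(1-\chi(p)p^{m-1})B_{m,\chi}/m$ up to an overall sign fixed by the convention $B_1(x)=x-\tfrac12$. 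The key arithmetic input is that $p$ splits in $K$, so $\chi_K(p)=1$; hence both the trivial-character evaluation of $\boldsymbol{\theta}_1^{\ast}$ and the $\chi_K$-evaluation of $\boldsymbol{\theta}_{d_K}^{\ast}$ acquire the same Euler factor $1-p^{m-1}$, producing $(1-p^{m-1})B_m/m$ and $(1-p^{m-1})B_{m,\chi_K}/m$ respectively (using $B_{m,\mathbf{1}}=B_m$ for $m\geq 2$). Feeding these into the two surviving terms and collecting the constants $-3$, the factor $2$ (even case), and $N\mathfrak{a}-N\mathfrak{a}^{1-m}$ yields exactly the two displayed formulas.

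I expect the main obstacle to be this last evaluation: pinning down the precise Euler factor and, above all, the global sign of the Stickelberger-to-Bernoulli specialization, since several conventions (the normalization of $B_1$, the involution $\ast$, the identification $\varepsilon_{\cyc}(\sigma_a)=a$, and the $p$-adic interpretation of $\varepsilon_{\cyc}^{m-1}(\sigma_a)$) all enter and must be reconciled to land on $-6$ and $-3$ with the correct signs. Everything else --- the parity dichotomy, the common prefactor, and the disappearance of the inflation ambiguity --- is forced by the structural features already recorded in Theorem \ref{thm:main_measure} and Lemma \ref{lmm:unity}.
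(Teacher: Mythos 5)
Your proposal is correct and follows essentially the same route as the paper: the paper's proof is the one-line deduction ``follows from Theorem \ref{thm:main_measure} and \cite[Theorem 12.2]{Wa82}'', i.e.\ specialize the measure-level identity via $\rho_{\et,\varepsilon_{\cyc}^{m}}$ and invoke the standard Stickelberger-to-Bernoulli evaluation, which is exactly what you spell out (including the parity dichotomy from $1-\sigma_{-1}$, the prefactor $N\mathfrak{a}-N\mathfrak{a}^{1-m}$, and the vanishing of the inflation ambiguity upon restriction to $G_{K(p^{\infty})}$). The only piece you leave implicit --- the precise sign and Euler factor in the evaluation of $\boldsymbol{\theta}_{1}^{\ast}$ and $\chi_{K}(\boldsymbol{\theta}_{d_{K}}^{\ast})$ at $\varepsilon_{\cyc}^{m-1}$ --- is precisely what the citation of Washington's Theorem 12.2 supplies in the paper.
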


\begin{proof}
The assertion follows from Theorem \ref{thm:main_measure} and \cite[Theorem 12.2]{Wa82}.
\end{proof}

\begin{cor}[cf. Theorem \ref{thm:main}]\label{cor:main2}
For each $m \geq 2$, we have the following equality between characters on $G_{K(p^{\infty})}$:
\[
\frac{\chi_{m}^{\E}}{1-p^{2m-2}}
=
\begin{dcases}
-6w_{K} \cdot \frac{B_{m}}{m} \cdot \frac{\chi_{m}^{(d_{K})}}{1-p^{m-1}} & \text{if $m$ is even,} \\
-3w_{K} \cdot \frac{B_{m,\chi_{K}}}{m} \cdot \frac{\chi_{m}}{1-p^{m-1}} & \text{if $m$ is odd.}
\end{dcases}
\]
\end{cor}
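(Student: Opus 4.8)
The plan is to eliminate the auxiliary ideal $\mathfrak{a}$ from the two identities already proved, namely Proposition \ref{prp:comp} and Corollary \ref{cor:main}, and then simplify the result; the argument is purely formal once one is careful about the final cancellation.

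First I would fix any nontrivial ideal $\mathfrak{a}$ of $O_{K}$ with $(\mathfrak{a}, d_{K}p)=1$, which exists because $O_{K}$ is a PID with only finitely many primes dividing $d_{K}p$. For this $\mathfrak{a}$, Proposition \ref{prp:comp} gives
\[
w_{K}\frac{\chi_{m,\mathfrak{a}}^{\E}}{(1-p^{m-1})^{2}}=(N\mathfrak{a}-N\mathfrak{a}^{1-m})\frac{\chi_{m}^{\E}}{1-p^{2m-2}},
\]
while Corollary \ref{cor:main} evaluates $\chi_{m,\mathfrak{a}}^{\E}$ as $-6(N\mathfrak{a}-N\mathfrak{a}^{1-m})(1-p^{m-1})\frac{B_{m}}{m}\chi_{m}^{(d_{K})}$ for even $m$ (and the analogous expression with $-3$, $B_{m,\chi_{K}}$, $\chi_{m}$ for odd $m$). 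Substituting the latter into the former, and using that $w_{K}$ and $1-p^{m-1}$ are units in $\mathbb{Z}_{p}$ (here $p\geq 5$, so $w_{K}\in\{2,4,6\}$ is prime to $p$ and $1-p^{m-1}\equiv 1 \bmod p$), I would collect terms to reach, in the even case,
\[
(N\mathfrak{a}-N\mathfrak{a}^{1-m})\left(\frac{\chi_{m}^{\E}}{1-p^{2m-2}}+6w_{K}\frac{B_{m}}{m}\frac{\chi_{m}^{(d_{K})}}{1-p^{m-1}}\right)=0,
\]
with the evident analogue for odd $m$.

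It then remains to cancel the scalar $N\mathfrak{a}-N\mathfrak{a}^{1-m}$. Since $N\mathfrak{a}\geq 2$ and $m\geq 2$, we have $N\mathfrak{a}-N\mathfrak{a}^{1-m}=N\mathfrak{a}^{1-m}(N\mathfrak{a}^{m}-1)\neq 0$, so this is a nonzero element of $\mathbb{Z}_{p}$. Viewed as characters on $G_{K(p^{\infty})}$, both sides lie in $\mathrm{Hom}(G_{K(p^{\infty})}^{\mathrm{ab}},\mathbb{Z}_{p}(m))$, on which $G_{K(p^{\infty})}$ acts trivially (as $G_{K(p^{\infty})}\subset G_{K(\mu_{p^{\infty}})}$); this module is torsion-free over $\mathbb{Z}_{p}$ because $\mathbb{Z}_{p}$ is a domain, so multiplication by the nonzero scalar $N\mathfrak{a}-N\mathfrak{a}^{1-m}$ is injective and may be cancelled. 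This yields precisely the asserted identity, and since the conclusion makes no reference to $\mathfrak{a}$, the choice is immaterial.

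The main (and only) point requiring care is this last cancellation. One cannot in general arrange $N\mathfrak{a}-N\mathfrak{a}^{1-m}$ to be a $p$-adic \emph{unit}: when $m\equiv 0 \bmod (p-1)$ the factor $N\mathfrak{a}^{m}-1$ is divisible by $p$ for every $\mathfrak{a}$ prime to $p$. It is therefore the $\mathbb{Z}_{p}$-torsion-freeness of the target $\mathrm{Hom}$-module, rather than invertibility of the scalar, that makes the division legitimate for all $m\geq 2$ uniformly.
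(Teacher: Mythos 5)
Your proposal is correct and follows exactly the paper's route: the paper's proof of Corollary \ref{cor:main2} is precisely ``combine Proposition \ref{prp:comp} with Corollary \ref{cor:main}'', and your elimination of $\mathfrak{a}$ is the natural filling-in of that one-line argument. Your observation that the cancellation of $N\mathfrak{a}-N\mathfrak{a}^{1-m}$ must rest on $\mathbb{Z}_{p}$-torsion-freeness of the target (rather than on invertibility of the scalar, which fails when $m\equiv 0 \bmod p-1$) is a worthwhile detail the paper leaves implicit.
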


\begin{proof}
The assertion follows from Proposition \ref{prp:comp} and Corollary \ref{cor:main}.
\end{proof}

\subsection{Proofs of corollaries}\label{3.2}

In this last section, we give two corollaries of Theorem \ref{thm:main}. First, we study reductions of elliptic Soul\'e characters modulo $p$. Note that certain sufficient conditions under which the reductions of the elliptic Soul\'e characters $\chi^{\E}_{(m_{1}, m_{2})}$ modulo $p$ are nontrivial are also studied in \cite[\textsection 5]{Is23}. Since we assume $p \geq 5$, we have
\[
\mathrm{Im}(\chi_{m}^{\E})
=
\begin{dcases}
\mathrm{Im}(\frac{B_{m}}{m} \cdot \chi_{m}^{(d_{K})}) \subset \mathbb{Z}_{p}(m) & \text{if $m \geq 2$ is even,} \\
\mathrm{Im}(\frac{B_{m,\chi_{K}}}{m} \cdot \chi_{m}) \subset \mathbb{Z}_{p}(m) & \text{if $m \geq 3$ is odd.}
\end{dcases}
\]
First, we consider two exceptional cases.

\begin{itemize}
\item If $m \equiv 1 \bmod p-1$, it is known \cite[Proposition 5.2]{Is23} that $\chi^{\E}_{m}$ is not surjective. On the other hand, $\chi_{m}$ is surjective since $\chi_{m} \bmod p$ corresponds to $p$-th roots of 
\[
\prod_{0<a<p} (1-\zeta_{p}^{a})=p \in K(p^{\infty})^{\times}/K(p^{\infty})^{\times p},
\] which is not trivial since $K(p^{\infty})$ is an abelian extension of $K$, whereas $K(\mu_{p}, \sqrt[p]{p})$ is not. The non-surjectivity of $\chi^{\E}_{m}$ comes from the fact that $B_{m, \chi_{K}}/m$ is divisible by $p$ \cite[Theorem 6]{Ca59}. 
\item If $m \equiv 0 \bmod p-1$, write $m=m'p^{n}(p-1)$ for some $n \geq 1$ and $(m',p)=1$. Then $p^{n+1}B_{m}/m$ is contained in $\mathbb{Z}_{p}^{\times}$ by the von Staudt-Clausen theorem \cite[Theorem 5.10]{Wa82}. This is in line with the fact that $\mathrm{Im}(\chi_{m}^{(d_{K})} \mid_{G_{K(p^{\infty})}}) \subset p^{n+1}\mathbb{Z}_{p}(m)$: By the five-term exact sequence, the cokernel of an injection
\[
H^{1}_{\et}(O_{K}[1/p], \mathbb{Z}_{p}(m))/H^{1}_{\et}(O_{K}[1/p], \mathbb{Z}_{p}(m))_{\mathrm{tors}} \hookrightarrow H^{1}_{\et}(O_{K(p^{\infty})}[1/p], \mathbb{Z}_{p}(m))^{\gal(K(p^{\infty})/K)}
\] induced by the restriction map is isomorphic to $H^{2}(\gal(K(p^{\infty})/K), \mathbb{Z}_{p}(m)) \cong \mathbb{Z}/p^{n+1}\mathbb{Z}$. Moreover, a similar argument to \cite[Lemma 5.5]{Is23} shows that 
\[
H^{1}_{\et}(O_{K(p^{\infty})}[1/p], \mathbb{Z}_{p}(m))^{\gal(K(p^{\infty})/K)} \cong \mathbb{Z}_{p}.
\] Hence the image of $\chi_{m}^{(d_{K})}$ (as a character on $G_{K(p^{\infty})}$) is contained in $p^{n+1}\mathbb{Z}_{p}$.

It follows from this equality that $\chi_{m}^{\E}$ is surjective if and only if $\chi_{m}^{(d_{K})}$ generates $H^{1}_{\et}(O_{K}[1/p], \mathbb{Z}_{p}(m))$ modulo the torsion subgroup. Since the restriction map induces an isomorphism
\begin{align*}
& H^{1}_{\et}(O_{K}[1/p], \mathbb{Z}_{p}(m))/H^{1}_{\et}(O_{K}[1/p], \mathbb{Z}_{p}(m))_{\mathrm{tors}} \\ \xrightarrow{\sim}& H^{1}_{\et}(O_{K(\mu_{p^{\infty}})}[1/p], \mathbb{Z}_{p}(m)) ^{\gal(K(\mu_{p^{\infty}})/K)} \cong \mathbb{Z}_{p}
\end{align*} (the second isomorphism follows from \cite[Theorem 2]{IS87}, for example), this is also equivalent to the surjectivity of $\chi_{m}^{(d_{K})}$ as a character on $G_{K(\mu_{p^{\infty}})}$.
\end{itemize}

For the remaining cases, we prove the following criterion:

\begin{cor}\label{cor:main3}
    The following two assertions are equivalent.
    \begin{enumerate}
        \item $\chi_{m}^{\E}$ is surjective for $m = 2,3, \dots, p-2$.
        \item The class number of $K(\mu_{p})$ is not divisible by $p$.
    \end{enumerate}
\end{cor}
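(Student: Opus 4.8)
The plan is to combine Corollary \ref{cor:main2} with the classical analytic class number formula for the imaginary abelian field $K(\mu_{p})$. Write $h^{+}$ for the class number of the maximal real subfield $K(\mu_{p})^{+}$ and $h^{-}$ for the relative class number, so that $h(K(\mu_{p}))=h^{+}h^{-}$; since both factors are positive integers, $p \nmid h(K(\mu_{p}))$ is equivalent to the conjunction ``$p \nmid h^{+}$ and $p \nmid h^{-}$''. The aim is to show that the surjectivity of $\chi_{m}^{\E}$ over the stated range decouples into two independent conditions: a surjectivity condition on the cyclotomic Soul\'e characters, which controls $h^{+}$, and a $p$-indivisibility condition on Bernoulli numbers, which controls $h^{-}$.

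First I would unwind the surjectivity of $\chi_{m}^{\E}$ using the image computation preceding this corollary. Since $1-p^{2m-2}$, $1-p^{m-1}$, $6$ and $w_{K}$ are all $p$-adic units for $p \geq 5$, Corollary \ref{cor:main2} shows that for $m$ in the range $\{2,\dots,p-2\}$ (so that $m \not\equiv 0,1 \bmod p-1$) the character $\chi_{m}^{\E}$ is surjective if and only if \emph{both} the relevant Bernoulli factor ($\tfrac{B_{m}}{m}$ for $m$ even, $\tfrac{B_{m,\chi_{K}}}{m}$ for $m$ odd) is a $p$-adic unit \emph{and} the corresponding cyclotomic character ($\chi_{m}^{(d_{K})}$ for $m$ even, $\chi_{m}$ for $m$ odd) is surjective. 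Here one must reconcile surjectivity on $G_{K(p^{\infty})}$ with surjectivity on $G_{K(\mu_{p^{\infty}})}$: as in the analysis of the case $m \equiv 0 \bmod p-1$ carried out above, for $m$ in the non-exceptional range the restriction maps induce isomorphisms modulo torsion $H^{1}_{\et}(O_{K}[1/p],\mathbb{Z}_{p}(m))/\mathrm{tors} \xrightarrow{\sim} H^{1}(G_{K(\mu_{p^{\infty}})},\mathbb{Z}_{p}(m))^{\gal(K(\mu_{p^{\infty}})/K)} \cong \mathbb{Z}_{p}$ (using the vanishing of $H^{2}(\gal(K(p^{\infty})/K),\mathbb{Z}_{p}(m))$), so the two notions agree. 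Taking the conjunction over all $m$, the surjectivity halves assert exactly that $\chi_{m}$ is surjective for odd $m \in \{3,\dots,p-2\}$ and $\chi_{m}^{(d_{K})}$ is surjective for even $m \in \{2,\dots,p-3\}$, which by Remark \ref{rmk:genSoule} (3) is equivalent to $p \nmid h^{+}$.

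It then remains to identify the conjunction of the Bernoulli conditions with $p \nmid h^{-}$. For this I would invoke the formula $h^{-}=Qw\prod_{\chi \text{ odd}}\ab(-\tfrac{1}{2}B_{1,\chi})$ \cite{Wa82}, the product ranging over the odd Dirichlet characters of $K(\mu_{p})$, namely $\omega^{j}$ with $j$ odd and $\chi_{K}\omega^{j}$ with $j$ even. The unit index $Q$ and the factor $-\tfrac12$ are $p$-adic units, and the Kummer congruences give $v_{p}(B_{1,\omega^{m-1}})=v_{p}(B_{m}/m)$ for even $m \in \{2,\dots,p-3\}$ and $v_{p}(B_{1,\chi_{K}\omega^{m-1}})=v_{p}(B_{m,\chi_{K}}/m)$ for odd $m \in \{3,\dots,p-2\}$, matching every odd character except the two boundary cases $\omega^{-1}=\omega^{p-2}$ and $\chi_{K}=\chi_{K}\omega^{0}$. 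These two are disposed of directly: the character $\chi_{K}$ contributes $v_{p}(B_{1,\chi_{K}})=0$, since $B_{1,\chi_{K}}=-2/w_{K}$ by the class number formula for $K$ together with the hypothesis $h(K)=1$; and since $\omega^{-1}(a)\equiv a^{-1} \bmod p$ one computes $v_{p}(B_{1,\omega^{-1}})=-1$, which exactly cancels the contribution $v_{p}(w)=1$ forced by $\mu_{p}\subset K(\mu_{p})$. Hence $v_{p}(h^{-})=\sum_{\chi \text{ matched}}v_{p}(B_{1,\chi})$, so $p \nmid h^{-}$ if and only if every matched Bernoulli number is a $p$-adic unit, i.e.\ if and only if all the Bernoulli conditions of the first step hold. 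Combining the two identifications through $h=h^{+}h^{-}$ yields the equivalence.

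The main obstacle I expect is the third paragraph: the careful bookkeeping of the odd characters of $K(\mu_{p})$ against the two families of Bernoulli numbers, and in particular the two exceptional characters. The delicate point is the cancellation $v_{p}(w)+v_{p}(B_{1,\omega^{-1}})=0$, which removes the spurious factor of $p$ that $\mu_{p}\subset K(\mu_{p})$ forces into $h^{-}$, and the use of $h(K)=1$ to kill the $\chi_{K}$-term. A secondary technical point, already flagged, is verifying that surjectivity of the auxiliary Soul\'e characters on $G_{K(p^{\infty})}$ and on $G_{K(\mu_{p^{\infty}})}$ coincide for non-exceptional $m$, which rests on the vanishing of the relevant $H^{2}$.
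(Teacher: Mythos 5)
Your proof is correct, and its first half follows the paper's skeleton: you decouple the surjectivity of $\chi_{m}^{\E}$ via Corollary \ref{cor:main2} into a Bernoulli-unit condition plus surjectivity of $\chi_{m}$ (resp.\ $\chi_{m}^{(d_{K})}$), you transfer surjectivity between $G_{K(p^{\infty})}$ and $G_{K(\mu_{p^{\infty}})}$ (your $H^{2}$-vanishing argument and the paper's argument via the nonexistence of a nontrivial equivariant map $\gal(K(p^{\infty})/K(\mu_{p^{\infty}}))\to\mathbb{F}_{p}(m)$ both rest on the nontriviality of the Galois action on the twist for $m\not\equiv 0 \bmod p-1$; your phrasing of which restriction map is inverted is slightly loose but the mechanism is right), and you invoke Remark \ref{rmk:genSoule} (3). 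Where you genuinely diverge is the arithmetic input identifying Bernoulli conditions with class-number indivisibility: the paper simply cites \cite[Theorem 1]{HP84}, which characterizes $p\nmid h(K(\mu_{p}))$ directly by the $p$-indivisibility of $B_{1,\chi_{K}}$, the $B_{m}$ and the $B_{m,\chi_{K}}$ --- a reflection-type statement whose content includes ``Bernoulli conditions alone force $p\nmid h^{+}$.'' You instead split $h=h^{+}h^{-}$, control $h^{+}$ by Remark \ref{rmk:genSoule} (3) in \emph{both} directions, and reprove the minus-part criterion from the relative class number formula \cite[Theorem 4.17]{Wa82}, the Kummer congruences $B_{1,\omega^{m-1}}\equiv B_{m}/m$ and $B_{1,\chi_{K}\omega^{m-1}}\equiv B_{m,\chi_{K}}/m \bmod p$, the cancellation $v_{p}(w)+v_{p}(B_{1,\omega^{-1}})=1-1=0$, and $B_{1,\chi_{K}}=-2/w_{K}$ coming from $h_{K}=1$. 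What your route buys: it avoids the external reflection theorem entirely (the cyclotomic-character surjectivity extracted from assertion (1) substitutes for it) and makes the plus/minus structure of the equivalence transparent. What the paper's route buys: brevity, and the fact that its direction $(1)\Rightarrow(2)$ needs only the Bernoulli conditions --- not the surjectivity of $\chi_{m}$, $\chi_{m}^{(d_{K})}$ --- since \cite[Theorem 1]{HP84} absorbs that work.
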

\begin{proof}
First, assume that the assertion (1) holds. It follows by Corollary \ref{cor:main2} that $B_{m}$ is not divisible by $p$ for $m=2,4,\dots,p-2$ and that $B_{m,\chi_{K}}$ is not divisible by $p$ for $m=3,5, \dots, p-3$. Moreover, since we assume that $K$ has class number one, $B_{1, \chi_{K}}$ is also not divisible by $p$ by the class number formula for $K$. Hence the class number of $K(\mu_{p})$ is not divisible by $p$ by \cite[Theorem 1]{HP84}. 

Conversely, assume that (2) holds. Then $\chi_{m}$ is surjective for every odd $m \geq 3$ as a character on $G_{K(\mu_{p^{\infty}})}$ and so is $\chi_{m}^{(d_{K})}$ for every even $m \geq 2$, cf. Remark \ref{rmk:genSoule} (3). If $\chi_{m}$ is not surjective when restricted to $G_{K(p^{\infty})}$ for some odd $m \geq 3$, then the reduction of $\chi_{m}$ modulo $p$ would induce a nontrivial $\gal(K(p^{\infty})/K)$-equivariant character
\[
\gal(K(p^{\infty})/K(\mu_{p^{\infty}})) \to \mathbb{F}_{p}(m).
\] This is a contradiction since $\gal(K(p^{\infty})/K)$ acts trivially on the left-hand side, whereas it does not on the right-hand side. Hence $\chi_{m}$ is still surjective even restricted on $G_{K(p^{\infty})}$, and the same argument shows that $\chi_{m}^{(d_{K})}$ is so for $m=2,4, \dots, p-3$. Since (generalized) Bernoulli numbers appearing in Corollary \ref{cor:main2} are not divisible by $p$ for $m=2,3,\dots,p-2$ by \cite[Theorem 1]{HP84}, the assertion (1) holds.
\end{proof}

\begin{ex}
    We give examples of pairs $(K,p)$ of an imaginary quadratic field of class number one and a prime splitting in $K$ such that the class number of $K(\mu_{p})$ is not divisible by $p$, by using \cite[Theorem 1]{HP84}. By Corollary \ref{cor:main3}, the characters $\chi_{2}^{\E}, \chi_{3}^{\E}, \dots, \chi_{p-2}^{\E}$ are surjective for every such pairs.

    When $K=\mathbb{Q}(\sqrt{-1})$, the first few examples of split primes $p$ such that the class number of $K(\mu_{p})=\mathbb{Q}(\mu_{4p})$ does not divide $p$ are $5, 13, 17, 29, 41$ and $53$. The prime $37$ is excluded since it is irregular, and $61$ is the first regular prime splitting in $K$ such that the class number of $K(\mu_{p})$ is divisible by $p$. In fact, we have $61 \mid B_{7,\chi_{K}}=427/2$.
    
    Similarly, the prime $103$ is the first regular prime splitting in $K=\mathbb{Q}(\sqrt{-3})$ such that the class number of $K(\mu_{p})=\mathbb{Q}(\mu_{3p})$ is divisible by $p$. Finally, when $K=\mathbb{Q}(\sqrt{-2})$, the first few primes satisfying Corollary \ref{cor:main3} are $p=7, 17, 31, 41$ and $47$. The prime $23$ is excluded since $B_{11,\chi_{K}}$ is divisible by $23$.
\end{ex}

Secondly, we prove an analogue of the Coleman-Ihara formula \cite[p.105, (Col2)]{Ih86}. Fix a prime of $\overline{K}$ above $\p$. For each integer $n \geq 1$, let $U_{n}$ be the group of principal units $U(\mathbb{Q}_{p}(\mu_{d_{K}p^{n}}))$ of $\mathbb{Q}_{p}(\mu_{d_{K}p^{n}})$. For each integer $m \geq 2$, we consider the following composite
\[
U_{\infty} \coloneqq \varprojlim U_{n} \to \varprojlim U(\mathbb{Q}_{p}(\mu_{p^{n}})) \xrightarrow{\phi_{m}^{\mathrm{CW}}} \mathbb{Z}_{p}(m)
\] where the first arrow is induced by norm maps from $\mathbb{Q}_{p}(\mu_{d_{K}p^{n}})$ to $\mathbb{Q}_{p}(\mu_{p^{n}})$ and the second one is the $m$-th Coates-Wiles homomorphism for $\mathbb{Q}_{p}$ \cite[Definition 3.5]{NSW17}. Note that inverse limits are taken with respect to norm maps. We denote this homomorphism by the same letter $\phi^{\mathrm{CW}}_{m}$. We also have a homomorphism
\[
\mathrm{rec}: U_{\infty} \to G_{\mathbb{Q}(\mu_{d_{K}p^{\infty}})}^{\mathrm{ab}}
\] obtained by composing the local reciprocity map with the natural map from the decomposition group at the fixed prime above $\p$ to the global Galois group. Then an analogue of the Coleman-Ihara formula is stated as follows:

\begin{cor}\label{cor:Coleman-Ihara} We have
\[
\frac{\chi_{m}^{\E}(\mathrm{rec}(\boldsymbol{u}))}{1-p^{2m-2}}
=
\begin{dcases}
3w_{K} \cdot L^{\mathrm{Katz}}_{p}(m, \omega^{1-m})\frac{d_{K}^{m}}{g(\chi_{K})} \frac{\phi^{\mathrm{CW}}_{m}(\mathrm{rec}(\boldsymbol{u}))}{p^{m-1}-1} & \text{if $m \geq 2$ is even,} \\
3w_{K} \cdot L^{\mathrm{Katz}}_{p}(m, \omega^{1-m}) \frac{\phi^{\mathrm{CW}}_{m}(\mathrm{rec}(\boldsymbol{u}))}{p^{m-1}-1} & \text{if $m \geq 3$ is odd.} 
\end{dcases}
\] Here, $L^{\mathrm{Katz}}_{p}$ is Katz's $p$-adic L-function \cite{Ka78} restricted on the Galois group $\gal(K(\mu_{p^{\infty}})/K)$ and $g(\chi_{K}) \coloneqq \sum_{b \in \mathbb{Z}/d_{K}\mathbb{Z}} \chi_{K}(b)\zeta_{d_{K}}^{-b} $ is the Gauss sum associated to $\chi_{K}$.
\end{cor}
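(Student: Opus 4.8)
The plan is to feed the factorization of Theorem \ref{thm:main} into the classical Coleman-Ihara formula and then to reorganize the resulting $p$-adic $L$-data through Gross's factorization. The starting observation is that, since $p$ splits in $K$, the completion of $K$ at the fixed prime above $\p$ is $\mathbb{Q}_p$; hence the local reciprocity map $\mathrm{rec}$, the Coates-Wiles homomorphism $\phi_m^{\mathrm{CW}}$, and the Coleman-Ihara formula of \cite[p.105, (Col2)]{Ih86} are the genuinely classical objects over $\mathbb{Q}_p$, and the pairing with $\mathrm{rec}(\boldsymbol{u})$ may be computed locally at $\p$. To make sense of the evaluations, I would view $\chi_m^{\E}$ (through Proposition \ref{prp:comp}), $\chi_m$ and $\chi_m^{(d_K)}$ as the underlying cocycle classes in $H^1_{\et}(O_K[1/p], \mathbb{Z}_p(m))$ and restrict them to $G_{\mathbb{Q}(\mu_{d_K p^{\infty}})} \subseteq G_K$: on this subgroup the Tate twist $\mathbb{Z}_p(m)$ is trivial (as $\mu_{p^{\infty}} \subseteq \mathbb{Q}(\mu_{d_K p^{\infty}})$), so each class becomes an honest homomorphism that can be fed $\mathrm{rec}(\boldsymbol{u})$. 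Since Theorem \ref{thm:main_measure} already furnishes an identity of cocycle classes modulo the inflation (torsion) part, the identity of Corollary \ref{cor:main2} descends to these restrictions, at least after checking that the torsion contributes nothing to the pairing with $\mathrm{rec}(\boldsymbol{u})$.

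With this in place, the first genuine step is to evaluate Corollary \ref{cor:main2} at $\mathrm{rec}(\boldsymbol{u})$, reducing the left-hand side (up to the constants $-3w_K$, $-6w_K$ and the Euler factors $1-p^{m-1}$, $1-p^{2m-2}$ already present there) to $\frac{B_{m,\chi_K}}{m}\chi_m(\mathrm{rec}(\boldsymbol{u}))$ for odd $m$ and to $\frac{B_m}{m}\chi_m^{(d_K)}(\mathrm{rec}(\boldsymbol{u}))$ for even $m$. The second step is to apply the Coleman-Ihara formula to the cyclotomic factor. For odd $m$ the character $\chi_m$ comes from the conductor-$p^n$ units $1-\zeta_{p^n}$, and the formula expresses $\chi_m(\mathrm{rec}(\boldsymbol{u}))$ as $\phi_m^{\mathrm{CW}}(\boldsymbol{u})$ times the Kubota-Leopoldt value for the trivial branch (and an elementary constant carrying the factor $p^{m-1}-1$); no Gauss sum appears. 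For even $m$ I would need the analogue for $\chi_m^{(d_K)}$, built from $\mathrm{N}_{\mathbb{Q}(\mu_{d_K p^n})/K(\mu_{p^n})}(1-\zeta_{d_K p^n})$: since this element has conductor $d_K p^n$, passing from it to $\phi_m^{\mathrm{CW}}$ (which factors through the norm down to $\mathbb{Q}_p(\mu_{p^n})$) forces the $\chi_K$-twist, and it is precisely this twist that produces the Gauss sum $g(\chi_K)$ and the conductor power $d_K^m$ recorded on the even line of the statement, now carrying the Kubota-Leopoldt value for the $\chi_K$-branch.

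The last step is to recombine the two pieces. In each parity I am left with the product of a (generalized) Bernoulli factor from Theorem \ref{thm:main}, which is a special value of one Kubota-Leopoldt $p$-adic $L$-function (the $\chi_K$-branch for odd $m$, the trivial branch for even $m$), and the Kubota-Leopoldt value from Coleman-Ihara (the complementary branch). By Gross's factorization \cite{Gr80} of the restriction of Katz's two-variable $p$-adic $L$-function to the cyclotomic line as a product of exactly these two Kubota-Leopoldt $L$-functions — together with the interpolation formulas identifying $B_m/m$ and $B_{m,\chi_K}/m$ with the relevant special values, as already exploited in Remark \ref{rmk:padicL} — this product collapses to the single Katz value $L_p^{\mathrm{Katz}}(m, \omega^{1-m})$, yielding the asserted formula. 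The step I expect to be hardest is the constant bookkeeping in the even case: fixing the exact normalization of the Coleman-Ihara formula for the twisted class $\chi_m^{(d_K)}$, tracking how $d_K$ and the Gauss sum enter, and verifying that Gross's factorization constant conspires with them to reproduce exactly $\frac{d_K^m}{g(\chi_K)}$. The odd case should be comparatively transparent, since there the underlying cyclotomic element has conductor $p^n$ and no Gauss sum intervenes.
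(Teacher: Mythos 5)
Your overall architecture is the same as the paper's: evaluate the factorization of Theorem \ref{thm:main} (Corollary \ref{cor:main2}) at $\mathrm{rec}(\boldsymbol{u})$, feed the cyclotomic factor into a Coleman--Ihara-type formula, and then use Gross's factorization to merge the resulting Kubota--Leopoldt value with the (generalized) Bernoulli factor into the single Katz value. For odd $m$ your argument is exactly the paper's proof: the classical formula \cite[p.105, (Col2)]{Ih86} applied to $\chi_{m}$, then Theorem \ref{thm:main} and Gross's identity $L_{p}^{\mathrm{Katz}}(m,\omega^{1-m})=L_{p}(1-m,\chi_{K}\omega^{m})L_{p}(m,\omega^{1-m})$.

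The even case, however, contains a genuine gap, and it is precisely the step you yourself flag as ``hardest.'' You assert that an analogue of Coleman--Ihara for $\chi_{m}^{(d_{K})}$ exists and that ``the $\chi_{K}$-twist forces the Gauss sum and the power $d_{K}^{m}$,'' but this is not something you can extract from the classical formula by formal norm-compatibility manipulations: for even $m$ the Soul\'e character $\chi_{m}$ is torsion (Example \ref{ex:Soule}), so there is no untwisted formula to deform, and the constant in front of $\phi_{m}^{\mathrm{CW}}(\boldsymbol{u})$ must be computed from scratch. The paper does this by invoking a genuinely different tool, the polylogarithmic Coleman--Ihara formula of Nakamura--Sakugawa--Wojtkowiak \cite[Theorem 4.4 (1)]{NSW17}, applied after decomposing $\chi_{m}^{(d_{K})}=d_{K}^{m-1}\sum_{\chi_{K}(a)=1}\chi_{m}^{\zeta_{d_{K}}^{a}}$ into restricted $p$-adic polylogarithmic characters. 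This expresses $\chi_{m}^{(d_{K})}(\mathrm{rec}(\boldsymbol{u}))$ through the values $\mathrm{Li}_{m}^{(p)}(\zeta_{d_{K}}^{a})$, and one then needs Coleman's formula \cite{Col82} for $g(\chi_{K})d_{K}^{-1}\sum_{a}\chi_{K}(a)\mathrm{Li}_{m}^{(p)}(\zeta_{d_{K}}^{-a})=L_{p}(m,\chi_{K}\omega^{1-m})$ together with the parity relation $\mathrm{Li}_{m}^{(p)}(\zeta_{d_{K}}^{a})=-\mathrm{Li}_{m}^{(p)}(\zeta_{d_{K}}^{-a})$ to obtain
\[
\sum_{\chi_{K}(a)=1}\mathrm{Li}_{m}^{(p)}(\zeta_{d_{K}}^{a})=-\frac{d_{K}}{2g(\chi_{K})}L_{p}(m,\chi_{K}\omega^{1-m}).
\]
None of these ingredients (the polylogarithmic formula, Coleman's formula, the parity trick producing the crucial factor $\tfrac{1}{2}$ that cancels against the $-6w_{K}$ versus $3w_{K}$ in the constants) appears in your sketch, and without them the normalization $\frac{d_{K}^{m}}{g(\chi_{K})}$ on the even line cannot be verified; it is asserted rather than derived. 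So the odd case stands, but the even case needs the NSW polylogarithmic machinery (or an equivalent explicit reciprocity computation for the Coleman power series of $\boldsymbol{c}^{(d_{K})}$) to be a proof.
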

\begin{proof}
    First, let us assume that $m$ is odd. Then the Coleman-Ihara formula \cite[p.105, (Col2)]{Ih86} implies the equality
    \[
    \frac{\chi_{m}(\mathrm{rec}(\boldsymbol{u}))}{p^{m}-1}=L_{p}(m,\omega^{1-m})\phi_{m}^{\mathrm{CW}}(\boldsymbol{u}).
    \] Hence the assertion follows from Theorem \ref{thm:main} and the following factorization formula due to Gross \cite[Theorem]{Gr80}:
    \[
    L_{p}^{\mathrm{Katz}}(m, \omega^{1-m})=L_{p}(1-m, \chi_{K}\omega^{m})L_{p}(m, \omega^{1-m}).
    \] If $m$ is even, we apply the polylogarithmic analogue of the Coleman-Ihara formula, which is proved by Nakamura-Sakugawa-Wojtkowiak \cite[Theorem 4.4 (1)]{NSW17}, to the character $\chi_{m}^{(d_{K})}$. Note that, in terms of their restricted $p$-adic polylogarithmic character $\chi_{m}^{z}$ \cite[(2.10)]{NSW17}, we have
    \[
    \chi_{m}^{(d_{K})}=d_{K}^{m-1}\sum_{\substack{a \in \mathbb{Z}/d_{K}\mathbb{Z} \\ \chi_{K}(a)=1}}\chi_{m}^{\zeta_{d_{K}}^{a}}.
    \] by the definition of $\chi_{m}^{(d_{K})}$ and \cite[(2.11)]{NSW17}. Hence applying \cite[Theorem 4.4 (1)]{NSW17} to the character $\chi_{m}^{(d_{K})}$ gives the following equality:
    \[
    \chi_{m}^{(d_{K})}(\mathrm{rec}(\boldsymbol{u}))=d_{K}^{m-1}\mathrm{Tr}_{\mathbb{Q}_{p}(\zeta_{d_{K}})/\mathbb{Q}_{p}}\left(\left( \sum_{\substack{a \in \mathbb{Z}/d_{K}\mathbb{Z} \\ \chi_{K}(a)=1}}\mathrm{Li}^{(p)}_{m}(\zeta_{d_{K}}^{a}) \right) (1-p^{m-1}\sigma_{\mathbb{Q}_{p}(\zeta_{d_{K}})}) \phi_{m,\mathbb{Q}_{p}(\zeta_{d_{K}})}^{\mathrm{CW}}(\boldsymbol{u})\right).
    \] Here, $\mathrm{Tr}$ denotes the trace map, $\mathrm{Li}^{(p)}_{m}$ denotes the $p$-adic polylogarithm function (which is denoted by $\ell^{(p)}_{m}(z)$ in \cite{Col82}), $\sigma_{\mathbb{Q}_{p}(\zeta_{d_{K}})}$ denotes the Frobenius element and $\phi_{m,\mathbb{Q}_{p}(\zeta_{d_{K}})}^{\mathrm{CW}}: U_{\infty} \to \mathbb{Q}_{p}(\zeta_{d_{K}})\otimes \mathbb{Z}_{p}(m)$ is the $m$-th Coates-Wiles homomorphism \cite[Definition 3.5]{NSW17} having the property that $\mathrm{Tr}_{\mathbb{Q}_{p}(\mu_{d_{K}})/\mathbb{Q}_{p}}(\phi^{\mathrm{CW}}_{m, \mathbb{Q}_{p}(\zeta_{d_{K}})}(\boldsymbol{u}))=\phi_{m}^{\mathrm{CW}}(\boldsymbol{u})$ which follows from the characterization of the Coleman power series. Note that the right-hand side is simplified as
    $(1-p^{m-1}) \left( \sum_{\chi_{K}(a)=1}\mathrm{Li}^{(p)}_{m}(\zeta_{d_{K}}^{a}) \right)\phi^{\mathrm{CW}}_{m}(\boldsymbol{u})
    $ by this property. Now we prove the following claim:
    
    \medskip 
    
    {\bf Claim. } We have
    \[
     \sum_{\chi_{K}(a)=1}\mathrm{Li}^{(p)}_{m}(\zeta_{d_{K}}^{a})
     =
     -\frac{d_{K}}{2g(\chi_{K})}L_{p}(m, \chi_{K}\omega^{1-m}).
    \]
    \begin{proof}[Proof of the claim]
        By Coleman's formula \cite[p.172 (3), see also p.203]{Col82}, we have
        \[
        g(\chi_{K})d_{K}^{-1}\sum_{a \in (\mathbb{Z}/d_{K}\mathbb{Z})^{\times}}\chi_{K}(a)\mathrm{Li}^{(p)}_{m}(\zeta_{d_{K}}^{-a})=L_{p}(m,\chi_{K}\omega^{1-m}).
        \] Since $\mathrm{Li}^{(p)}_{m}(\zeta_{d_{K}}^{a})=-\mathrm{Li}^{(p)}_{m}(\zeta_{d_{K}}^{-a})$ by \cite[the proof of Proposition 6.4 (i)]{Col82}, we have
        \[
        \sum_{a \in (\mathbb{Z}/d_{K}\mathbb{Z})^{\times}}\chi_{K}(a)\mathrm{Li}^{(p)}_{m}(\zeta_{d_{K}}^{-a})= -2 \sum_{\chi_{K}(a)=1}\mathrm{Li}^{(p)}_{m}(\zeta_{d_{K}}^{a}).
        \] Hence the claim follows.
    \end{proof} 
    
    By this claim, we have the equality
    \[
    \frac{\chi_{m}^{(d_{K})}(\mathrm{rec}(\boldsymbol{u}))}{p^{m-1}-1}=\frac{d_{K}^{m}}{2g(\chi_{K})}L_{p}(m, \chi_{K}\omega^{1-m})\phi_{m}^{\mathrm{CW}}(\boldsymbol{u}).
    \]
    Using Theorem \ref{thm:main} and Gross's factorization formula, the desired equality follows.
    \end{proof}

\begin{rmk} In view of Nakamura-Sakugawa-Wojtkowiak \cite{NSW17}, it may be desirable to have an elliptic-polylogarithmic analogue of Corollary \ref{cor:Coleman-Ihara}. We hope to study such a generalization in a future article.
\end{rmk}

{\bf Acknowledgements.} The authors would like to thank Densuke Shiraishi and Kenji Sakugawa for their helpful comments and discussions. They are also grateful to the anonymous referee for valuable feedback. The first author was supported by JSPS KAKENHI Grant Number JP23KJ1882. The second author was supported by JST SPRING, Grant Number JPMJSP2123 and RIKEN Junior Research Associate Program.

% bibliography
\bibliographystyle{plain}
\bibliography{references}

\end{document}